\def\vrp{\varphi}
\def\m{m}
\def\vo{vo}
\def\VO{$V^0$}
\def\Sep{\operatorname{Sep}}
\newcommand\D[1]{\operatorname{D{#1}}}
\def\chv{\widecheck{v}}
\newcommand{\ds}[1]{\ {#1} \ }
\newcommand{\dss}[1]{\quad {#1} \quad }
\def\sm{\setminus}
\def\inu{\nu^{-1}}
\def\onto{\twoheadrightarrow}
\def\tot{\operatorname{tot}}
\def\getot{\geq_{\tot}}
\def\letot{\leq_{\tot}}
\def\dcup{ \; \dot \cup \;}
\newcommand{\etype}[1]{\renewcommand{\labelenumi}{(#1{enumi})}}
\def\cT{\mathcal T}
\def\cG{\mathcal G}
\def\STR{\operatorname{STR}}
\def\OSTR{\operatorname{OSTR}}
\def\tT{\mathcal T}
\def\tG{\mathcal G}
\def\endbox{ \hfill\quad\qed}
\def\EO{\operatorname{EO}}
\def\Gh{\operatorname{Gh}}
\def\eroman{\etype{\roman}}
\def\ealph{\etype{\alph}}
\def\pSkip{\vskip 1.5mm \noindent}
\def\Nil{\operatorname{Nil}}
\def\Vz{V^0}
\def\N{\mathbb N}
\def\mfA{\mathfrak A}
\def\mfq{\mathfrak q}
\def\mfp{\mathfrak p}
\def\mfa{\mathfrak a}
\def\darM{M_{\downarrow}}
\def\uarM{M^{\uparrow}}
\def\darv{v_{\downarrow}}
\def\uarv{v^{\uparrow}}
\def\olM{\overline M}
\def\al{\alpha}
\def\bt{\beta}
\def\gm{\gamma}
\newtheorem{thm}{Theorem} [section]
\newtheorem*{thm*}{Theorem}
\newtheorem{cor}[thm]{Corollary}
\newtheorem{lem}[thm]{Lemma}
\newtheorem{lemma}[thm]{Lemma}
\newtheorem{prop}[thm]{Proposition}
\newtheorem*{claim*} {Claim}
\newtheorem*{theorem4.6'} {Theorem 4.6$'$}
\newtheorem{acknowledgment*}[thm] {Acknowledgment}
\newtheorem{example}[thm]{Example}
\newtheorem{sexample}[thm]{Subexample}
\newtheorem{examp}[thm]{Example}
\newtheorem{examples}[thm]{Examples}
 \newtheorem{rem}[thm]{Remark}
 \newtheorem{remark}[thm]{Remark}
  \newtheorem{remarks}[thm]{Remarks}
 \newtheorem*{remark*}{Remark}
 \newtheorem{defn}[thm]{Definition}
\newtheorem{construction}[thm]{Construction}
\newtheorem{schol}[thm]{Scholium}
\newtheorem{notations}[thm]{Notations}
\newtheorem*{notation*} {Notation}
\newtheorem*{notations*} {Notations}
\newtheorem*{comment*} {Comment}
\newcommand{\thmref}[1]{Theorem~\ref{#1}}
\newcommand{\propref}[1]{Proposition~\ref{#1}}
\newcommand{\lemref}[1]{Lemma~\ref{#1}}
 \renewcommand{\sectionmark}[1]{}
\newcommand{\bfem}[1]{\textbf{#1}}
\newcommand{\diag}{\operatorname{diag}}
 \newcommand{\id}{\operatorname{id}}
 \newcommand{\supp} {\operatorname{supp}}
\newcommand{\osr}{\overset\sim \rightarrow}
\begin{document}

\title[Monoid valuations and value ordered supervaluations ]
{Monoid valuations \\ \vskip 2mm and value ordered supervaluations
}
\author[Z. Izhakian]{Zur Izhakian}
\address{Department of Mathematics, Bar-Ilan University, Ramat-Gan 52900,
Israel}
\email{zzur@math.biu.ac.il}
\author[M. Knebusch]{Manfred Knebusch}
\address{Department of Mathematics,
NWF-I Mathematik, Universit\"at Regensburg 93040 Regensburg,
Germany} \email{manfred.knebusch@mathematik.uni-regensburg.de}
\author[L. Rowen]{Louis Rowen}
 \address{Department of Mathematics,
 Bar-Ilan University,  Ramat-Gan 52900, Israel}
 \email{rowen@macs.biu.ac.il}

\thanks{The research of the first author has been supported  by the
Oberwolfach Leibniz Fellows Programme (OWLF), Mathematisches
Forschungsinstitut Oberwolfach, Germany.}

\thanks{The research of the first and third authors have  been  supported  by the
Israel Science Foundation (grant No.  448/09).}

\thanks{The research of the second author was supported in part by
 the Gelbart Institute at
Bar-Ilan University, the Minerva Foundation at Tel-Aviv
University, the Department of Mathematics   of Bar-Ilan
University, and the Emmy Noether Institute at Bar-Ilan
University.}


\subjclass[2010]  {Primary: 13A18, 13F30, 16W60, 16Y60; Secondary:
03G10, 06B23, 12K10,   14T05}

\date{\today}


\keywords{Supertropical algebra, Ordered supertropical semirings,
Bipotent semirings, Valuation theory, Monoid valuations,
Supervaluations, Lattices, Transmissive and homomorphic
equivalence relations. }


\begin{abstract}
We complement two papers on supertropical valuation theory
(\cite{IKR1}, \cite{IKR2}) by providing natural examples of
\m-valuations (= monoid valuations), after that of supervaluations
and transmissions between them. The supervaluations discussed have
values in \textbf{totally ordered supertropical semirings}, and
the transmissions discussed respect the orderings. Basics of a
theory of such semirings and transmissions are developed as far as
needed.
\end{abstract}

\maketitle

\tableofcontents

\baselineskip 14pt

\numberwithin{equation}{section}

\section*{Introduction}

The present paper is a complement to  the papers \cite{IKR1} and
\cite{IKR2} on supertropical valuation theory by the same authors.
We deal with semirings which always are taken to be commutative.
Generalizing Bourbaki's notion of a valuation on a commutative
ring \cite{B}, we introduced in \cite{IKR1} \emph{\m-valuations}
(= monoid valuations) and then \emph{supervaluations} on a
(commutative) semiring $R$. These are certain maps from $R$ to a
``\emph{bipotent semiring}'' $M$ and a ``\emph{supertropical
semiring}'' $U$, respectively.

To repeat, a semiring $M$ is \textbf{bipotent} if $M$ is a totally
ordered monoid  under multiplication with smallest element $0$,
and the addition is given by $x+ y = \max(x,y)$. Then an
\textbf{\m-valuation} on $R$ is a multiplicative map $v: R \to M
$, which sends $0$  to $0$, $1$ to $1$, and obeys the rule $v(a+b)
\leq v(a) + v(b).$ We call $v$ a \textbf{valuation} if moreover
the semiring $M$ is cancellative. \{In the classical case of a
Krull valuation $v$, $R$ is a field and $M = \tG \cup \{0\}$, with
$\tG$ the value group of $v$ written in multiplicative notation.\}

A \textbf{supertropical semiring} $U$ is a  semiring such that
$e:= 1+1$ is an idempotent of $U$ and two more axioms hold
(\cite[Definitions 3.5 and 3.9]{IKR1}), which imply in particular
that the ideal $M := eU$ is a bipotent semiring. The elements of
$M \sm \{ 0 \}$ are called \textbf{ghost} and those of $\tT(U) :=
U \sm M$ are called \textbf{tangible}. The zero element of $U$ is
regarded both as ghost and tangible. For $x \in U$ we call $ex$
the \textbf{ghost companion} of $x$. For $x,y \in U$ we have the
rule
$$ x + y = \left\{
\begin{array}{llll}
  y  &  & \text{if} &  ex < ey, \\[1mm]
  x  &  & \text{if} &  ex > ey, \\[1mm]
  ex  &  & \text{if} &  ex = ey. \\
\end{array}
\right.$$ Thus  addition on $U$ is uniquely defined by
multiplication and the element $e$.  We also mention that $ex = 0
$ implies $x = 0$. We refer to \cite[\S3]{IKR1} for all details.

Finally, a \textbf{supervaluation} on $R$ is a multiplicative map
$\vrp: R \to U$ to a supertropical semiring $U$ sending $0$ to $0$
and $1$ to $1$, such that the map $e \vrp : R \to eU$, $a \mapsto
e \vrp(a)$, is an \m-valuation. We then say that $\vrp$
\textbf{covers} the \m-valuation  $v:= e\vrp$.

If $\vrp: R \to U$ is a  supervaluation  then $U' := \vrp(R) \cup
e \vrp(R)$ is a sub-semiring of $U$ and is again supertropical. In
practice we nearly always may replace $U$ by $U'$ and then have a
supervaluation at hand which we call
\textbf{surjective}\footnote{Although this does not mean
surjectivity in the usual sense, there is no danger of confusion
since a supervaluation $\vrp:R \to U$ hardly  ever can be
surjective as a map except in the degenerate case $U=M$.}.

Given a surjective supervaluation $\vrp: R \to U$ and a map $\al:U
\to V$ to a supertropical semiring $V$, the map $\al \circ \vrp$
is again a supervaluation iff $\al$ is multiplicative, sends $0$
to $0$, $1$ to $1$, $e$ to $e$, and restricts to a semiring
homomorphism from $eU$ to $eV$. \{We denote the elements $1+1$ in
$U$ and $V$ both by ``$e$''.\} We call such a map $\al: U \to V$ a
\textbf{transmission}. Any semiring homomorphism from $U$ to $V$
is a transmission, but there exist others.

 Transmissions are tied up with the relation of
\textbf{dominance} defined in \cite[\S5]{IKR1}. If $\vrp: R \to U
$ and $\psi: R \to V$ are supervaluations and $\vrp$ is
surjective, then $\vrp$ \textbf{dominates} $\psi$, which we denote
by $\vrp \geq \psi$, iff there exists a transmission $\al: U \to
V$ with $\psi =\al \circ \vrp$. If $\vrp \geq \psi$ we also say
that $\psi$ is a \textbf{coarsening} of the supervaluation $\vrp$.

A bipotent semiring $M$ may be viewed as a supertropical semiring
$U$ with empty set $\tT(U)$, i.e., $U = eU =M$. Then a
transmission $\gm: M \to N$ is just a semiring homomorphism.  In
other terms, $\gm$  is an order preserving monoid homomorphism
with $\gm(0) =0$. If $R$ is a field and $v: R\to M$, $w: R \to N$
are Krull valuations (in multiplicative notation), then the
dominance relation $v \geq w$  means that $w$ is a coarsening of
$v$ in the classical sense.

At crucial points in the paper \cite{IKR1}, \cite{IKR2} we had to
assume that the supervaluations in question cover a valuation $v:
R \to M$ instead of just an \m-valuation, i.e., $M$  had to be
assumed cancellative. On the other hand these papers contain few
examples of true \m-valuations. Thus a reader might suspect that
it is better in supertropical valuation theory to focus from the
beginning on valuations instead of \m-valuations. The first goal
of the present paper is to clarify this situation,

In \S\ref{sec:1} we study two very natural classes of
\m-valuations, the so-called \textbf{$V$-valuations} and
\textbf{$\Vz$-valuations}. They have been introduced  (on rings)
by Harrison-Vitulli \cite{HV1} and D.~Zhang \cite{Z},
respectively. To our opinion these \m-valuations, which often are
not valuations, have not yet found the attention in the literature
that  they deserve.

In \S\ref{sec:1} it is proved that every nontrivial \m-valuation
dominates both a $V$-valuation and a $\Vz$-valuation (which may be
different) in a canonical way. There are also given various
instances of dominance $v \geq w$  with $v$ a $\Vz$-valuation and
$w$ a $V$-valuation, or vice versa. Then in \S\ref{sec:2} we
exhibit a canonical way to coarsen a given \m-valuation to a
valuation. If $v$ is a $V$-valuation or a $\Vz$-valuation, almost
always this coarsening is again a $V$-valuation or a
$\Vz$-valuation. One gets the impression that a supertropical
valuation theory excluding \m-valuations would be very incomplete.

A second goal of the paper is to give natural explicit examples of
supervaluations and dominance relations between them.  For that
reason  we start in \S\ref{sec:3} a theory of
\textbf{supertropical semirings} which are \textbf{totally
ordered}. The total order on such a semiring $U$  has to be
compatible with addition and multiplication, and has to extend the
order on $M = eU$ as a bipotent semiring.

 A supervaluation with values in a totally ordered semiring $U$
 will be called a \textbf{value-ordered supervaluation} or
 \textbf{\vo-supervaluation}, for short. Given two \vo-valuations $\vrp: R \to
 U$ and
 $\psi: R \to U$ we establish in \S\ref{sec:5} a refined notion of
 dominance, called \textbf{total dominance} and written
 $\geq_{\tot}$, which is sharper than the dominance relation $\vrp \geq
 \psi$ considered in \cite{IKR1}  and \cite{IKR2}. If $\vrp$ is
 surjective it means that the transmission $\al : U \to V$  with
 $\al \circ \vrp = \psi$ respects the orderings of $U$ and $V$. We
 then say that the transmission $\al$ is \textbf{monotone.}

 All examples of supervaluations in \S\ref{sec:3}-\S\ref{sec:6}
 will be \vo-supervaluations and all discussed  transmissions between
 them will be monotone.

It seems desirable to have a theory of \vo-supervaluations and
monotone transmissions at hand which parallels the theory of
supervaluations and transmissions in  \cite{IKR1}  and
\cite{IKR2}. The present paper only takes first steps in such a
theory, just enough to obtain a rich  stock of examples of
\vo-supervaluations and transmissions. An advantage of the
examples is that the total orderings ease the insight into the
structure of such supervaluations and transmissions compared to
cases where total orderings are not present or not respected.

An important point here is that  every monotone transmission  is a
semiring homomorphism (cf. Theorem \ref{thm5.3} below), while --
as we known from \cite{IKR1}  and \cite{IKR2} -- there exist many
transmissions which are not homomorphisms. Thus the examples do
not reflect certain aspects of general supervaluation theory.

A full fledged  theory of \vo-supervaluations should embrace an
analysis of the \vo-superval-uations  $\vrp: R \to U$ on a ring
$R$ equipped with  a cone or prime cone $T$ (cf. e.g.
\cite[Definitions~4.2.1, 4.3.1]{BCR}) which are compatible with
$T$ and the total ordering of $U$ in an appropriate sense. It
should have relevance for real algebraic geometry. We have to
leave these matters for future investigation.

\begin{notations*}
Given sets $X,Y$ we mean by $Y \subset X$ that $Y$ is a subset of
$X$, with $Y  = X$ allowed. If $E$ is an equivalence relation on
$X$ then $X/E$ denotes the set of $E$-equivalence classes in $X$,
and $\pi_E: X \to X/E$ is the map which sends an element $x$ of
$X$ to its $E$-equivalence class, which we denote by $[x]_E$. If
$Y \subset X$, we put $Y/E := \{[x]_E  \ds | x \in Y\}.$


$\tT(U)$ and $\tG(U)$ denote the sets of tangible and ghost
elements of $U$, respectively, cf. \cite[Terminology 3.7]{IKR1}.

If $v : R \to M $ is an \m-valuation we call the ideal $v^{-1}(0)$
of $R$ the \textbf{support} of $v$, and denote it by $\supp(v)$.
\end{notations*}

\section{$V$-valuations and $\Vz$-valuations}\label{sec:1}

Given any $m$-valuation $v: R\to M$ on a semiring $R$, we
introduce the sets
$$A_v:=\{x\in R \ds |v(x)\le 1\},$$
$$\mfp _v:=\{x\in R \ds |v(x)<1\}.$$
Clearly, $A_v$ is a sub-semiring of $R$ and~$\mfp _v$ is a prime
ideal of $A_v.$ Moreover, the sets $R\setminus \mfp _v$ and~
$R\setminus A_v$ are both closed under multiplication. The set
$R\setminus A_v$ may be empty, but $R\setminus \mfp _v$ is not,
since $1\notin \mfp _v.$

\begin{defn}\label{defn1.1}
We call $A_v$ the \bfem{valuation semiring} of $v,$ and $\mfp_v$
the \bfem{valuation ideal} of~$v.$
\end{defn}

If $R=F$ happens to be a semifield and $v: F\to M$ is a surjective
$m$-valuation, hence a surjective valuation (cf.~our terminology
in \cite[\S2]{IKR1}), then we meet a situation very similar to the
classical case that $F$ is a field. Now $A:=A_v$ has the property
that for any $x\in F^*$ either $x$ or $x^{-1}$ is an element of
$A,$ and for $x,y\in F^*$
$$v(x)\le v(y) \ \Leftrightarrow \ \frac{x}{y}\in A^* \ \Leftrightarrow \ xA
\subset yA.$$ Thus, the valuation $v$ is determined up to
equivalence by the sub-semiring $A_v=A$ of $F.$ It is also
uniquely determined by the set $\mfp _v$ since
$$A =\{z\in F \ds |z\mfp _v\subset \mfp _v\}.$$

Notice that $M$ is now a bipotent semifield,
$M=\Gamma\dot\cup\{0\}$ with $\Gamma$ an ordered abelian group.
This group can be identified with the group $F^*/A^*,$ since for
$x,y\in F^*$ we have
$$v(x)=v(y)\ \Leftrightarrow \ xA^*=yA^*.$$
\{$A^*$ denotes the group of units of the semiring $A.$\} Thus, we
may also write $M\cong F/A^*$, i.e., $M$ is the quotient of the
semifield~$F$ by the orbital equivalence relation on~$F$ given by
$A^*.$

In the case that $v$ is strong (which is automatic if $F$ is a
field), even the subgroup $A^*$ of~$F$ determines $v$ up to
equivalence. Indeed, now
$$A=A^*\cup\{x\in R \ds |1+x\in A^*\}.$$


In general,  matters are much more complicated. In the present
section our first goal is to coarsen a given surjective
$m$-valuation $v: R\twoheadrightarrow N$ ``slightly" in such a way
that the
 $m$-valuation $w:
R\twoheadrightarrow N$ has the same valuation ideal $\mfp_w=\mfp
_v$ as $v$ (as a subset of $R$ closed under addition and
multiplication), but $w$ is determined by the set $\mfp=\mfp _w$
in a canonical way. \{$w$ is a so-called ``$\Vz$-valuation", see
below.\} We then will pursue the same program based on the set
$A_v$ instead of $\mfp _v.$


\vskip 10pt

 In the following $R$
is always a (commutative) semiring.

\begin{defn}[cf. {\cite[\S 1]{C}}]\label{defn1.2}
Let $\mfp$ be a subset of $R$ with $$0\in \mfp, \quad 1\notin
\mfp, \quad \mfp+\mfp \subset \mfp,$$ and both $\mfp$ and $R
\setminus \mfp$ closed under multiplication. Then we call $\mfp$ a
\bfem{prime} of $R$.
\end{defn}

\begin{example}\label{examp1.3}
If $v: R\to M$ is any $m$-valuation, then $\mfp_v$ is a {prime} of
$R$.\end{example}

Let $\mfp$ be a {prime} of $R$. For any $x\in R$ and subset $L$ of
$R$, we put
$$[L:x]=\{z\in R \ds |zx\in L\}.$$
We define on $R$ an equivalence relation $\sim$ as follows:
$$x\sim y \ds \rightleftharpoons[\mfp :x]=[\mfp :y].$$

We observe that this equivalence relation is multiplicative, i.e.,
$x\sim y$ implies $xz\sim yz$ for any $z\in R.$ Indeed, suppose we
have a triple $x,y,z$ with $x\sim y$, but $xz\not\sim yz,$ say,
$[\mfp :xz]\not\subset [\mfp :yz].$ Then there exists some $u\in
R$ with $uyz\in \mfp ,$ but $uxz\notin \mfp.$ Thus $uz\in[\mfp
:y],$ but $uz\notin [\mfp :x].$ This contradicts the equality
$[\mfp:x]=[\mfp :y].$

We introduce the monoid
$$M:=M (R,\mfp):=(R/\sim,\cdot \; )$$
with the multiplication
$$[x]\cdot[y]:=[xy],$$
where $[x]$ denotes the equivalence class of $x$. Clearly, we have
a partial ordering $\le$ on the set $M$ given by
$$[y]\le[x] \dss \rightleftharpoons[\mfp :x]\subset [\mfp :y].$$
We start out to prove that this partial ordering is in fact total.

We will use the following lemma, now for the set $L = \mfp$, but
later also in other  situations.
 Let $L$ be a subset of $R$ such that both $L$ and $R \setminus L$
 are closed under multiplication.
\begin{lem}\label{lem1.4}
Let $x,y, s ,t\in R,$ and assume that $sx\in L $ and $ty\in L .$
Then at least one of the elements $sy,tx$ lies in $L$.\end{lem}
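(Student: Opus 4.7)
The plan is to argue by contradiction, exploiting the fact that the product $sx \cdot ty$ and the product $sy \cdot tx$ coincide (both equal $stxy$ in the commutative semiring $R$), so they cannot lie on opposite sides of the partition $L \dcup (R \sm L)$.

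More precisely, I would suppose that \emph{neither} $sy$ nor $tx$ belongs to $L$, so that both lie in $R \sm L$. Since $R \sm L$ is assumed closed under multiplication, this forces $(sy)(tx) = stxy \in R \sm L$. On the other hand, the hypotheses give $sx \in L$ and $ty \in L$, and since $L$ itself is closed under multiplication, $(sx)(ty) = stxy \in L$. Using commutativity of $R$ to identify the two products, we obtain $stxy \in L \cap (R \sm L) = \emptyset$, a contradiction.

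There is essentially no obstacle here: the lemma is a one-line consequence of commutativity together with the hypothesis that the partition $\{L, R \sm L\}$ is multiplicatively closed on both sides. The only thing worth being careful about is that commutativity of $R$ is indeed in force (which it is, since the paper states at the outset that all semirings are commutative); otherwise one would need to rearrange factors more delicately. No further lemmas from the paper are needed.
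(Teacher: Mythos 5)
Your proof is correct and is essentially the paper's own argument: the paper notes directly that $sy\cdot tx=sx\cdot ty\in L$ (using closure of $L$) and then invokes closure of $R\setminus L$ to conclude $sy\in L$ or $tx\in L$, which is just the contrapositive form of your contradiction argument. The key identity $sy\cdot tx=sx\cdot ty$ via commutativity is the same in both.
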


\begin{proof}
Since $L $ is closed under multiplication, we have
$$sy\cdot tx=sx\cdot ty\in L .$$
Since $R\setminus L $ is closed under multiplication, we conclude
that $sy\in L $ or $tx\in L .$\end{proof}

\begin{prop}\label{prop1.5}
Let $x,y\in R$ and $[\mfp :x]\not\subset[\mfp :y].$ Then $[\mfp
:y]\subset [\mfp :x].$
\end{prop}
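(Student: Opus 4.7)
The plan is to apply \lemref{lem1.4} directly with $L=\mfp$. The hypothesis ``$\mfp$ is a prime of $R$'' gives exactly what Lemma 1.4 requires: both $\mfp$ and $R\setminus\mfp$ are closed under multiplication. So the lemma is available with no further preparation.

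First I would unpack the hypothesis $[\mfp:x]\not\subset[\mfp:y]$ by choosing a witness: there exists $s\in R$ with $sx\in\mfp$ but $sy\notin\mfp$. Then, to prove $[\mfp:y]\subset[\mfp:x]$, I would take an arbitrary $t\in[\mfp:y]$, meaning $ty\in\mfp$, and aim to show $tx\in\mfp$. Applying \lemref{lem1.4} to the four elements $s,t,x,y$ and the set $L:=\mfp$, using $sx\in\mfp$ and $ty\in\mfp$, one concludes that $sy\in\mfp$ or $tx\in\mfp$. The first option is excluded by the choice of the witness $s$, so $tx\in\mfp$, i.e., $t\in[\mfp:x]$, which is what we need.

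There is no real obstacle here; the entire content of the proposition is packaged inside \lemref{lem1.4}, and the only nontrivial step is recognizing that the two facts $sx\in\mfp$ and $ty\in\mfp$ are precisely the form in which the lemma applies. The role of the proposition in the paper is simply to conclude that the partial order $\le$ on $M(R,\mfp)$ defined by reverse inclusion of the sets $[\mfp:x]$ is in fact total, which follows formally from the equivalence ``$[\mfp:x]\not\subset[\mfp:y]\Rightarrow[\mfp:y]\subset[\mfp:x]$'' just established, combined with the dual implication obtained by swapping the roles of $x$ and $y$.
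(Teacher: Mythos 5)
Your proof is correct and is essentially identical to the paper's: both pick a witness $s$ (the paper calls it $z$) with $sx\in\mfp$ but $sy\notin\mfp$, take an arbitrary $t\in[\mfp:y]$ (the paper's $u$), and invoke Lemma~\ref{lem1.4} with $L=\mfp$ to force $tx\in\mfp$. The only differences are cosmetic (choice of variable names).
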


\begin{proof}
We pick some $z\in R$ with $zx\in \mfp ,$ but $zy\notin \mfp.$ Let
$u\in[\mfp :y]$ be given. We have $zx\in \mfp ,$ $yu\in \mfp ,$
but $zy\notin \mfp .$ We conclude by the lemma that $ux\in \mfp ,$
i.e., $u\in [\mfp :x].$ This proves the claim.\end{proof}

Thus, the ordering on $M$ is total. The equivalence class $[0]$ is
the smallest element of $M$, since
$$[\mfp :0]=R\supset[\mfp :x]$$
for every $x\in R.$ Observe also that our ordering is compatible
with the multiplication on the monoid $M.$ Indeed,
$[\mfp:x]\subset[\mfp :y]$ implies $[\mfp :xz]\subset[\mfp :yz]$
for every $z\in R,$ as is easily seen.

We regard $M$ as a bipotent semiring, defining the addition on $M$
in the usual way (cf.~\cite[\S1]{IKR1}): $$ \text{If } \ [x]\le
[y], \text{ then } \ [x]+[y] :=[y].$$ We have $[0]=0_M,$
$[1]=1_M.$

\begin{thm}\label{thm1.6}
{}\quad

\begin{enumerate}
\item[a)] The map
$$v=v_{R,\mfp
}: R\to M,\qquad v(x):=[x],$$ is an $m$-valuation on the semiring
$R.$

\item[b)] Its support is
$$\mfq  :=\{x\in R \ds |Rx\subset \mfp \},$$
and this is a prime ideal of $R.$\end{enumerate}
\end{thm}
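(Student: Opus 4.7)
The plan is to check the three axioms of an m-valuation, then to compute the support explicitly and verify it is a prime ideal. Everything needed has essentially been set up before the theorem.

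\medskip

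For part (a), I would proceed as follows. Multiplicativity of $v$ is immediate from the definition of the product on $M$: $v(xy) = [xy] = [x][y] = v(x) v(y)$; the product is well-defined because $\sim$ was shown to be multiplicative. The normalization $v(0) = [0] = 0_M$ and $v(1) = [1] = 1_M$ holds by the construction of $M$ as a bipotent semiring. The only nontrivial axiom is $v(a+b) \leq v(a) + v(b)$. Since the ordering on $M$ is total by Proposition~\ref{prop1.5}, I may assume, without loss of generality, that $[\mfp:b] \subset [\mfp:a]$, so that $v(a) + v(b) = [b]$; the claim then reduces to $[\mfp:b] \subset [\mfp:a+b]$. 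For any $z \in [\mfp:b]$ we have $zb \in \mfp$ and, by the containment assumption, $za \in \mfp$. The hypothesis $\mfp + \mfp \subset \mfp$ then gives $z(a+b) = za + zb \in \mfp$, as needed. This is the single place where the additive closure of $\mfp$ is used; I expect it to be the main technical step, though it is not actually hard.

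\medskip

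For part (b), the support is obtained by unwinding definitions: $v(x) = 0$ iff $[x] = [0]$ iff $[\mfp:x] = [\mfp:0] = R$, and this last equality says exactly that $zx \in \mfp$ for every $z \in R$, i.e., $Rx \subset \mfp$. Hence $\supp(v) = \mfq$.

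\medskip

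It remains to verify that $\mfq$ is a prime ideal. Containment of $0$ is clear, and closure under multiplication by elements of $R$ is immediate from $R(rx) \subset Rx \subset \mfp$. Closure under addition again uses $\mfp + \mfp \subset \mfp$: if $Rx, Ry \subset \mfp$, then $z(x+y) = zx + zy \in \mfp$. Finally, $1 \notin \mfq$ because $R \cdot 1 = R$ contains $1 \notin \mfp$. For primality, suppose $xy \in \mfq$ but $x \notin \mfq$ and $y \notin \mfq$; pick $s,t \in R$ with $sx \notin \mfp$ and $ty \notin \mfp$. Since $R \setminus \mfp$ is closed under multiplication (a defining property of a prime in the sense of Definition~\ref{defn1.2}), $(sx)(ty) = (st)(xy) \notin \mfp$, contradicting $xy \in \mfq$, which forces $(st)(xy) \in \mfp$. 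This completes the argument. The only real interplay between the two halves of the theorem is that both rely on the two closure properties of $\mfp$: $\mfp + \mfp \subset \mfp$ governs the triangle inequality and the additive closure of $\mfq$, while multiplicative closure of $R \setminus \mfp$ governs primality.
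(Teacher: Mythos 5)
Your proof is correct and follows essentially the same route as the paper: the triangle inequality is reduced (by totality, Proposition~\ref{prop1.5}) to showing $[\mfp:b]\subset[\mfp:a+b]$ when $[\mfp:b]\subset[\mfp:a]$, with $\mfp+\mfp\subset\mfp$ doing the work, and primality of $\mfq$ is established via $(sx)(ty)=(st)(xy)\notin\mfp$ using multiplicative closure of $R\setminus\mfp$. The only difference is that you spell out the routine verification that $\mfq$ is a (proper) ideal, which the paper leaves implicit.
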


\begin{proof}
a): Clearly $v(0)=0,$ $v(1)=1,$ and $v(xy)=v(x)v(y)$ for any
$x,y\in R.$

It remains to verify for any $x,y\in R$  with $v(x) \leq v(y)$
that $v(x+y)\le v(y),$ i.e., $[\mfp :y]\ds \subset $ $[\mfp
:x+y].$ Given $z\in [\mfp :y],$ we have $zy\in \mfp $. This
implies $zx\in \mfp$ and then $z(x+y)\in \mfp.$ \{N.B. Here we use
for the first time that $\mfp + \mfp \subset \mfp .$\} Thus $[\mfp
:y]\subset [\mfp :x+y],$ i.e., $v(x+y)\le v(y),$ as desired.
\pSkip

 b): Given $x\in R,$ we have $v(x)=0$ iff $x\sim
0,$ i.e., $[\mfp ;x]=[\mfp :0]=R.$ Thus, $v$ has the support
$$\mfq :=\{x\in R \ds|Rx\subset \mfp \}.$$  Now, if $x,y\in
R\setminus\mfq ,$ there exist elements $s,t\in R$ with $sx\notin
\mfp ,$ $ty\notin \mfp .$ It follows that $stxy\notin \mfp ,$ and
hence $xy\notin\mfq .$ Thus, $\mfq  $ is a prime ideal of $R.$
\end{proof}

\begin{defn}\label{defn1.7}
We say that $$v_{R,\mfp}: R \ds \to M(R,\mfp)$$ is the
$m$-\bfem{valuation associated to the  prime $\mfp$ of $R$}. We
call any $m$-valuation equivalent to such a valuation $v_{R,\mfp}$
a ${\bf \Vz}$-\bfem{valuation}. Later (from \S\ref{sec:3} onward),
we often write $v_{\mfp}$ instead of $v_{R,\mfp}.$
\end{defn}

The construction of these $m$-valuations is in some sense dual to
the construction of the ``$V$-valuations" in the paper \cite{HV1}
by Harrison and Vitulli; hence the label $\Vz$. We will discuss
$V$-valuations below.

We compute the valuation semiring and valuation ideal of a
$\Vz$-valuation $v_{R,\mfp}.$

\begin{prop}\label{prop1.8}
Let $v=v_{R,\mfp}$ for a prime $\mfp$ of $R$. Then $A_v=\{x\in R
\ds | x \mfp \subset \mfp \}$ and $\mfp _v=\mfp .$
\end{prop}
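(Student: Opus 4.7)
The plan is to unwind all definitions until the two equalities become immediate consequences of the defining properties of a prime (closure of $\mfp$ and $R\setminus\mfp$ under multiplication, together with $0\in\mfp$ and $1\notin\mfp$).

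First I would handle $A_v$. By definition, $x\in A_v$ means $v(x)\leq v(1)=1$, which by the ordering on $M=M(R,\mfp)$ translates to $[\mfp:1]\subset[\mfp:x]$. Since $[\mfp:1]=\mfp$, this is exactly the condition that for every $y\in\mfp$ one has $yx\in\mfp$, i.e.\ $x\mfp\subset\mfp$. That gives the first equality with essentially no work.

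Next I would prove $\mfp_v=\mfp$ by two inclusions. For $\mfp\subset\mfp_v$, assume $x\in\mfp$. Closure of $\mfp$ under multiplication yields $x\mfp\subset\mfp$, so $x\in A_v$ by the first part; to upgrade $v(x)\leq 1$ to $v(x)<1$, observe that $1\cdot x=x\in\mfp$ puts $1$ into $[\mfp:x]$, whereas $1\notin\mfp=[\mfp:1]$, so the inclusion $[\mfp:1]\subset[\mfp:x]$ is strict, giving $v(x)<1$. For the reverse inclusion $\mfp_v\subset\mfp$, I argue the contrapositive: suppose $x\notin\mfp$; I want $v(x)\geq 1$, i.e.\ $[\mfp:x]\subset\mfp$. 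Take $z\in[\mfp:x]$, so $zx\in\mfp$. If $z\notin\mfp$, then $z$ and $x$ both lie in $R\setminus\mfp$, which is closed under multiplication, forcing $zx\notin\mfp$, a contradiction. Hence $z\in\mfp$, so $[\mfp:x]\subset\mfp=[\mfp:1]$, which means $v(x)\geq 1$ and $x\notin\mfp_v$.

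There is no real obstacle here, as the argument is almost purely definitional; the only subtle point is remembering to use the \emph{strict} inclusion $1\notin\mfp$ when separating the case $v(x)=1$ from $v(x)<1$ in the $\mfp\subset\mfp_v$ direction, and symmetrically exploiting closure of $R\setminus\mfp$ under multiplication (which is exactly the ``prime'' part of Definition 1.2) in the reverse direction.
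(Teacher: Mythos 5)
Your proof is correct and uses essentially the same ingredients as the paper: part (a) is the identical chain of equivalences, and for part (b) you decompose the biconditional into two inclusions where the paper runs a single equivalence chain, but both arguments invoke exactly the same facts ($1\notin\mfp$ witnessing the strict inclusion $[\mfp:1]\subsetneq[\mfp:x]$ when $x\in\mfp$, and multiplicative closure of $R\setminus\mfp$ for the converse). The reorganization into two inclusions is a cosmetic difference, not a genuinely different route.
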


\begin{proof}
Let $x\in R.$

\noindent a) $x\in A_v\Leftrightarrow v(x)\le 1\Leftrightarrow
[\mfp:x]\supset[\mfp :1]=\mfp \Leftrightarrow x\mfp \subset \mfp.$
\pSkip

\noindent b) $x\in \mfp _v\Leftrightarrow v(x)< 1\Leftrightarrow
[\mfp:x] \supsetneqq[\mfp :1]=\mfp \Leftrightarrow \mfp x\subset
\mfp ,$ but there exists also some $s\in R\setminus \mfp $ with
$sx\in \mfp .$ Since both $\mfp$ and $R\setminus \mfp $ are closed
under multiplication, the last  condition means that $x\in \mfp.$
We conclude that $\mfp_v = \mfp$.
\end{proof}

\begin{lem}\label{lem1.9}
Let $v: R\twoheadrightarrow  M$ be a surjective $\Vz$-valuation.
Then the bipotent semiring $M$ has the following separation
property:
 \begin{align*}& \text{\emph{($\Sep^0_V$)}}: \ \text{If}\ \al ,\bt \in M \ \text{and}\
 \al <\bt ,\ \text{there exists some} \  \gm \in
 M\\   & \qquad \qquad \text{with}\  \al \gm <1\ \text{and}\
\bt \gm \ge 1.
\end{align*}

%
\end{lem}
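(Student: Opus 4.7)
The plan is to unravel the definition of $v_{R,\mfp}$ and read off $\gm$ directly from the data witnessing $\al<\bt$. Since $v$ is surjective, up to equivalence we may assume $v = v_{R,\mfp}$ and $M = M(R,\mfp) = R/{\sim}$ for some prime $\mfp$ of $R$, with the ordering $[y]\le[x] \Leftrightarrow [\mfp:x]\subset[\mfp:y]$.

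First I would pick representatives $x,y\in R$ with $v(x)=\al$, $v(y)=\bt$. The strict inequality $\al<\bt$ translates via the definition of the order on $M(R,\mfp)$ into a proper inclusion
$$[\mfp:y] \subsetneq [\mfp:x].$$
Hence there exists $z\in R$ with $zx\in\mfp$ but $zy\notin\mfp$. This $z$ is the witness I would use: set $\gm := v(z) = [z]$.

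The verification then reduces entirely to Proposition \ref{prop1.8}, which says $\mfp_v = \mfp$. On the one hand, $xz\in\mfp=\mfp_v$ gives $v(xz) < 1$, so $\al\gm<1$. On the other hand, since $R\sm\mfp$ is closed under multiplication, $yz\notin\mfp$ is equivalent to $v(yz)\not< 1$, i.e.\ $v(yz)\ge 1$, so $\bt\gm\ge 1$.

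There is essentially no obstacle to overcome: the separation property $(\Sep^0_V)$ is, in a sense, a direct reformulation of the way the order on $M(R,\mfp)$ is defined, once one remembers from Proposition \ref{prop1.8} that ``$< 1$'' and ``$\ge 1$'' in $M$ correspond respectively to ``lies in $\mfp$'' and ``lies in $R\sm\mfp$''. The only small point to keep in mind is to use the equivalence of $v$ with $v_{R,\mfp}$ so that the inferences about the order in $M$ can be transported back from $M(R,\mfp)$; this is immediate since both the order relation and the condition $(\Sep^0_V)$ are invariant under isomorphism of bipotent semirings.
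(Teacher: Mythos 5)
Your proof is correct and follows essentially the same approach as the paper: pick representatives $x,y$, translate $\al<\bt$ into the strict inclusion $[\mfp:y]\subsetneq[\mfp:x]$, extract a witness $z$, and set $\gm:=v(z)$. The only difference is cosmetic---you make explicit the appeal to Proposition~\ref{prop1.8} ($\mfp_v=\mfp$) to justify reading off $v(zx)<1$ and $v(zy)\ge1$, while the paper uses this silently.
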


\begin{proof}
Choose $x,y\in R$ with  $v(x)=\al ,$ $v(y)=\bt .$ Then $[\mfp
:x]\varsupsetneqq[\mfp:  y].$ Thus, there exists some $z\in R$
with $zx\in\mfp $ but $zy\notin \mfp .$ This means that $v(zx)<1$
but $v(zy)\ge1.$ The element $\gm :=v(z)$ does the job.\end{proof}

\begin{defn}\label{defn1.10}
We call a bipotent semiring $M$ having the separation property
 {\rm (Sep}$^0_V${\rm)} a
\bfem{${\bf \Vz}$-semiring}.\end{defn}

We now can state a remarkable fact.

\begin{thm}\label{thm1.11}
Assume that $R$ is a semiring, $M$ is a $\Vz$-semiring, and
$v:R\to M$ is  a surjective map with $v(0)=0,$ $v(1)=1$,
$v(xy)=v(x)v(y)$ for any $x,y\in R$ (i.e., $v$ is a homomorphism
from the monoid $(R,\cdot \; )$ onto the monoid $(M,\cdot \; )).$
Assume also that
\begin{equation}\label{1.1}
\forall x,y\in R:\quad  v(x)<1,v(y)<1\ \Rightarrow\ v(x+y)<1.
\end{equation}
Let $\mfp :=\{x\in R \ds |v(x)<1\}.$ Then $\mfp$ is a prime of
$R$, and $v$ is a $\Vz$-valuation equivalent to~$v_{R,\mfp }.$
\end{thm}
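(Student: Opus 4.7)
The plan is to split the proof into three tasks: (i) verify that $\mfp$ is a prime of $R$ in the sense of Definition~\ref{defn1.2}; (ii) promote the restricted subadditivity \eqref{1.1} to the full $m$-valuation inequality $v(x+y)\leq v(x)+v(y)$; and (iii) construct an isomorphism of bipotent semirings $\phi\colon M\iso M(R,\mfp)$ with $\phi\circ v = v_{R,\mfp}$, which exhibits $v$ as a $\Vz$-valuation equivalent to $v_{R,\mfp}$.

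Task (i) should be almost immediate. Since $v$ is a monoid homomorphism with $v(0)=0$ and $v(1)=1$, we get $0\in\mfp$ and $1\notin\mfp$; the condition $\mfp+\mfp\subset\mfp$ is literally \eqref{1.1}; and closure under multiplication of $\mfp$ and of $R\sm\mfp$ follows from monotonicity of multiplication in the totally ordered bipotent semiring $M$, namely $v(y)<1\Rightarrow v(x)v(y)\leq v(x)$, while $v(x),v(y)\geq 1\Rightarrow v(x)v(y)\geq 1$. The real difficulty lies in task (ii), which is where $\Sep^0_V$ together with the surjectivity of $v$ earn their keep: they allow one to transport any ``witness'' $\gm\in M$ back to an actual element $z\in R$. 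Given $x,y$ with $v(x)\leq v(y)$, I assume for contradiction that $v(x+y)>v(y)$ and apply $\Sep^0_V$ to $\al:=v(y)$ and $\bt:=v(x+y)$ to obtain $\gm\in M$ with $v(y)\gm<1$ and $v(x+y)\gm\geq 1$. Picking $z\in R$ with $v(z)=\gm$, monotonicity gives $v(xz)\leq v(yz)<1$, so both $xz,yz\in\mfp$; then \eqref{1.1} forces $(x+y)z=xz+yz\in\mfp$, i.e., $v(x+y)\gm<1$, contradicting the choice of $\gm$.

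For task (iii) I define $\phi\colon M\to M(R,\mfp)$ by $\phi(v(x)):=[x]_\sim$. Well-definedness reduces to the implication $v(x)=v(y)\Rightarrow[\mfp:x]=[\mfp:y]$, which is immediate since $zx\in\mfp \Leftrightarrow v(z)v(x)<1 \Leftrightarrow v(z)v(y)<1 \Leftrightarrow zy\in\mfp$; surjectivity of $\phi$ is inherited from that of $v$. For injectivity and strict order preservation in a single move, I start from $v(x)<v(y)$ and apply $\Sep^0_V$ once more to produce $\gm$ with $v(x)\gm<1$ and $v(y)\gm\geq 1$; lifting via $z$ places $z$ in $[\mfp:x]\sm[\mfp:y]$, so $[\mfp:x]\supsetneq[\mfp:y]$, which by the definition of the ordering on $M(R,\mfp)$ translates to $[x]<[y]$. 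Together with multiplicativity $\phi(v(x)v(y))=[xy]=[x][y]$, this makes $\phi$ the required isomorphism of bipotent semirings, and $\phi\circ v = v_{R,\mfp}$ holds by construction.
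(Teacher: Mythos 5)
Your proof is correct and follows essentially the same route as the paper: both hinge on using $\Sep^0_V$ together with surjectivity of $v$ to lift a separating element $\gm \in M$ to a witness $z \in R$, and both conclude by producing an order-preserving semiring isomorphism $M \iso M(R,\mfp)$ identifying $v$ with $v_{R,\mfp}$. The one genuine difference is your task (ii): you verify the $m$-valuation inequality $v(x+y) \leq v(x)+v(y)$ directly (again via $\Sep^0_V$ and a lift), whereas the paper skips this step and lets it fall out as a byproduct of the isomorphism, since $v_{R,\mfp}$ is already known to be an $m$-valuation by Theorem~\ref{thm1.6} and the $m$-valuation property is preserved under composition with a semiring isomorphism. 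Your extra step is therefore logically redundant, but it is a clean, self-contained verification that does no harm; and your treatment of task (iii) is if anything slightly more economical, since you deduce order-reflection of $\phi$ from strict order-preservation plus bijectivity and totality of the orders, rather than arguing both directions of (1.2) separately as the paper does.
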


\begin{proof}
It is obvious that $0\in \mfp ,$ $1\notin\mfp $ and both $\mfp$
and $R \setminus  \mfp $ are closed under multiplication. The rule
\eqref{1.1} tells us that $\mfp +\mfp \subset \mfp .$ Thus $\mfp $
is a prime of $R$. We will verify that
\begin{equation}\label{1.2}
\forall x,y\in R:\quad  v(x)\le v(y) \ds \Leftrightarrow\ [\mfp :
x]\supset[\mfp :y].\end{equation} Then we will be done. Indeed,
let $w:=v_{R,\mfp },$ $N:=M(R,\mfp).$ We know by \eqref{1.2} that
for any $x,y\in R,$ $v(x)=v(y)$ iff $w(x)=w(y).$ Thus we have a
well-defined bijection $\gm : M\to N$ with $\gm (v(x))=w(x)$ for
all $x\in R.$ This map sends 0 to 0,\ 1 to 0, and is
multiplicative. Further, \eqref{1.2} tells us that $\gm $ is order
preserving. Thus $\gm $ is a semiring isomorphism and $\gm \circ
v=w.$

Instead of \eqref{1.2} we verify the equivalent property
$$\forall x,y\in R: \quad v(x)>v(y)\ \Leftrightarrow\ [\mfp:x]\varsubsetneqq[\mfp :y].$$
If $v(x)>v(y)$ then, of course, for every $z$ with $v(xz)<1, $
i.e., $xz\in \mfp ,$ we have $v(yz)< 1$ i.e., $yz\in \mfp .$ But
since $M$ is $\Vz$, there exists some $z'\in R$ with $v(xz')\ge1,$
$v(yz')<1, $ i.e., $xz'\notin \mfp ,$ $yz'\in \mfp .$ Thus
$[\mfp:x]\varsubsetneqq[\mfp :y].$ On the other hand, if
$[\mfp:x]\varsubsetneqq[\mfp :y],$ we have some $z\in R$ with
$v(xz) \geq 1,$ $v(yz) <  1,$ and hence $v(xz) >  v(yz).$ Thus
certainly $v(x) > v(y).$
\end{proof}

\begin{thm}\label{thm1.12}
Assume that $v: R\to M$ is a surjective $m$-valuation. Let
$$w:=v_{R,\mfp }:R \ds \to N:=M(R,\mfp)$$ denote the $\Vz$-valuation
associated to the prime $\mfp := \mfp _v.$

Then $v$ dominates $w,$ i.e., there exists a (unique) semiring
homomorphism $\gm : M\twoheadrightarrow N$ such that $w=\gm \circ
v.$ In other terms, $w$ is a coarsening of $v,$
cf.~\cite[\S2]{IKR1}, \cite{IKR2}.
\end{thm}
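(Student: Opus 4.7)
The plan is to use surjectivity of $v$ to \emph{force} the definition of $\gm$, and then verify well-definedness and the homomorphism axioms. Since any $\gm:M\to N$ with $\gm\circ v=w$ must satisfy $\gm(v(x))=w(x)$ for every $x\in R$, and since $v:R\twoheadrightarrow M$ is surjective, uniqueness is automatic; the content lies in showing that the prescription $\gm(v(x)):=w(x)$ is well defined, and that the resulting map is a semiring homomorphism.

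The heart of the matter — and the only step that requires any work — is the implication
\[
\forall\, x,y\in R:\qquad v(x)\le v(y)\ \Longrightarrow\ w(x)\le w(y),
\]
i.e., $[\mfp:y]\subset[\mfp:x]$ (recall $\mfp=\mfp_v$). I would argue directly: pick $z\in[\mfp:y]$, so $zy\in\mfp$, which by definition of $\mfp_v$ means $v(z)v(y)=v(zy)<1$. Since $v(x)\le v(y)$ and the ordering on the bipotent semiring $M$ is compatible with multiplication, we obtain $v(zx)=v(z)v(x)\le v(z)v(y)<1$, whence $zx\in\mfp$, i.e., $z\in[\mfp:x]$. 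This proves the order-preservation claim, and specializing to $v(x)=v(y)$ yields $[\mfp:x]=[\mfp:y]$, so that $\gm$ is well defined by $\gm(v(x)):=[x]_{\sim}=w(x)$.

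It remains to check that $\gm$ is a semiring homomorphism. Multiplicativity and the values $\gm(0)=0$, $\gm(1)=1$ follow immediately from multiplicativity of $v$ and $w$ together with the corresponding normalizations. For addition, both $M$ and $N$ are bipotent, so $\al+\bt=\max(\al,\bt)$ in each; the order-preservation established above shows that $\gm$ respects $\max$, hence respects addition. Thus $\gm$ is a semiring homomorphism with $\gm\circ v=w$, surjective because $w$ is, and unique as noted. The main (and only nontrivial) obstacle is really just recognizing that the compatibility of the total order on $M$ with multiplication lets one transport the condition $v(zy)<1$ down to $v(zx)<1$ whenever $v(x)\le v(y)$; everything else is a formal consequence of surjectivity of $v$ and bipotence.
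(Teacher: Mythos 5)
Your proof is correct and follows the same route as the paper: the entire content is the implication $v(x)\le v(y)\Rightarrow [\mfp:y]\subset[\mfp:x]$, which you establish by the multiplicativity of $v$ together with compatibility of the total order on $M$ with multiplication, exactly as the paper does. The paper simply compresses the surrounding formalities (well-definedness of $\gm$, the homomorphism axioms, uniqueness and surjectivity) into a citation of the definition of dominance for $m$-valuations, whereas you spell them out.
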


\begin{proof} We only need to verify that, for any $x,y\in R,$
$v(x) \leq v(y)$ implies $w(x) \leq w(y),$
cf.~\cite[Definition~2.9]{IKR1}. But this is obvious. If $v(x)
\leq v(y),$ then
$$[\mfp :x]=\{z\in R \ds | v(zx) \in  \mfp \}\supset\{z\in R \ds |v(zy)\in  \mfp
\}=[\mfp:y].$$\end{proof}

\begin{defn}\label{defn1.13} We call $w$ the
\bfem{$\bf \Vz$-coarsening} of $v,$ and we write $w=\darv.$
\end{defn}

\begin{example}\label{examp1.14}
Let $M$ be any bipotent semiring. The identity map $v=\id_M$ may
be viewed as a (strict) $m$-valuation on the semiring $M.$ Thus it
gives us a $\Vz$-valuation $$\gm_V^0 := \gm^0_{M,V} :=
(\id_M)_\downarrow: M\twoheadrightarrow \darM.$$ Since $v$ is
strict, its coarsening $\gm_V^0$ is again strict, i.e., $\gm_V^0$
is a semiring homomorphism. The associated homomorphic equivalence
relation is given by
$$x\sim y\ds \Leftrightarrow [\mfp _M:x]=[\mfp _M:y]$$
with $$\mfp _M:=\{x\in M \ds|x<1\}.$$ The valuation semiring of
$\gm_V^0$ is
$$[ \mfp_M: \mfp_M ] :=\{x\in M\ds |x \mfp_M \subset   \mfp_M\},$$ and the valuation ideal of $\gm_V^0$ is
$\mfp _M.$\end{example}

We retain the notations developed in this example. The map
$\gm_V^0$ allows us a fresh view of the $\Vz$-coarsening of any
surjective $m$-valuation $v: R \twoheadrightarrow M.$

\begin{prop}\label{prop1.15}
The $\Vz$-coarsening $\darv$ of $v$ is equivalent to the
$m$-valuation
$$\gm_V^0\circ v: R\to\darM.$$
\end{prop}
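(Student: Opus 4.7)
The plan is to produce a semiring isomorphism $\gm : \darM \iso N$, where $N := M(R,\mfp_v)$, satisfying $\gm \circ (\gm_V^0 \circ v) = \darv$. Both maps $\darv : R \to N$ and $\gm_V^0 \circ v : R \to \darM$ are surjective $m$-valuations onto bipotent semirings (the second, because it is the composite of the surjections $v$ and $\gm_V^0$), so once they are shown to induce the same preorder on $R$, the construction used in the last paragraph of the proof of \thmref{thm1.11} produces $\gm$ with no further work.

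The decisive step is the equivalence
$$\darv(x) \le \darv(y) \ds \Longleftrightarrow\ (\gm_V^0 \circ v)(x) \le (\gm_V^0 \circ v)(y).$$
By \propref{prop1.8} and the definition of $v_{R,\mfp_v}$, the left side unfolds to $[\mfp_v : x] \supset [\mfp_v : y]$. Since $\mfp_v = v^{-1}(\mfp_M)$ and $v$ is multiplicative, one has $zy \in \mfp_v$ iff $v(z) v(y) \in \mfp_M$, so $[\mfp_v : y] = v^{-1}([\mfp_M : v(y)])$ and analogously for $x$. Surjectivity of $v$ then converts containment of preimages in $R$ into containment of the original subsets of $M$, and by \exampref{examp1.14} this last containment is precisely the defining condition for $\gm_V^0(v(x)) \le \gm_V^0(v(y))$.

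Applying this equivalence in both directions also forces $\darv(x) = \darv(y)$ iff $(\gm_V^0 \circ v)(x) = (\gm_V^0 \circ v)(y)$, so the rule $\gm(\darv(x)) := \gm_V^0(v(x))$ gives a well-defined order-preserving multiplicative bijection $N \to \darM$ sending $0$ to $0$ and $1$ to $1$; any such map between bipotent semirings is automatically a semiring isomorphism, yielding the required equivalence. The only genuinely nontrivial point is the preimage-containment-to-subset-containment passage, which rests essentially on the surjectivity of $v$ and would collapse without it. Everything else is a routine unwinding of Definition~\ref{defn1.7}, Definition~\ref{defn1.13}, and \exampref{examp1.14}.
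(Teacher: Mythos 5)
Your proof is correct, and it takes a genuinely different (if closely related) route from the paper's. The paper's proof is a three-line application of earlier machinery: it computes $\mfp_w = w^{-1}(\mfp_{\darM}) = v^{-1}(\mfp_M) = \mfp_v$ for $w := \gm_V^0 \circ v$, cites Lemma~\ref{lem1.9} to get that $\darM$ is a $\Vz$-semiring, and then invokes Theorem~\ref{thm1.11} as a black box to conclude $w \sim v_{R,\mfp_v} = \darv$. You instead unfold the content of Theorem~\ref{thm1.11} into an explicit preorder comparison, using the identity $[\mfp_v : x] = v^{-1}\bigl([\mfp_M : v(x)]\bigr)$ (a one-line consequence of $\mfp_v = v^{-1}(\mfp_M)$ and multiplicativity of $v$) together with surjectivity of $v$ to translate $[\mfp_v : x]\supset[\mfp_v : y]$ into $[\mfp_M : v(x)]\supset[\mfp_M : v(y)]$, the defining inequality for $\gm_V^0$; the isomorphism-from-shared-preorder step is then the construction already carried out inside the proof of Theorem~\ref{thm1.11}. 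Your route makes the role of surjectivity completely explicit and avoids invoking the separation property $(\Sep^0_V)$, at the cost of redoing part of what Theorem~\ref{thm1.11} packages; the paper's route is shorter and illustrates the payoff of having axiomatized $\Vz$-semirings. Both are valid and lead to the same isomorphism.
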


\begin{proof} Let $\mfp :=\mfp _v.$ We have
${\gm_V^0}^{-1}(\mfp _{\darM})=\mfp _M$ and $v^{-1}(\mfp_M)=\mfp
.$ Thus, $w:=\gm_V^0\circ v: R\to \darM$ has the valuation ideal
$\mfp _w=\mfp .$ Moreover, $\darM$ is a $\Vz$-semiring.
\thmref{thm1.11} gives the claim.\end{proof}

We now turn to the construction of $V$-valuations, to be found in
\cite{HV1} (in the  case that $R$ is a ring). As before $R$ may be
any semiring.

\begin{defn}\label{defn1.16} \cite{HV1}. Let $A$ be a subset of $R$
with $$ 0 \in A, \quad 1 \in A, \quad A+A \subset A,$$ and both
$A$ and $R \setminus A$   closed under multiplication.  Then we
call $A$ a \textbf{CMC-subsemiring of} $R$.
\end{defn}

In other words, a set $A \subset R$ is a CMC-subsemiring of $R$
iff $A$ is a subsemiring of $R$ and~ $R \setminus A$ is closed
under multiplication. The label CMC (= complement multiplicatively
closed) alludes to this latter property. Notice that, if $R$
happens to be a ring, then the relation $(-1) \cdot (-1) = 1$
forces $(-1)$ to be in $A$, hence $A$ is a subring of $R$.

Let now $A$ be a CMC-subsemiring of $R$, which is \textbf{proper},
i.e. $A \neq R$. Then, in complete analogy to the above, we define
an equivalence relation $\sim$ on $R$ by
$$ x \sim y \rightleftharpoons [A:x] = [A:y].$$
This equivalence relation is again multiplicative; hence we obtain
a monoid
$$ M := M(R,A) := (R/\sim, \cdot \ )$$
with the multiplication
$$ [x] \cdot [y] := [xy],$$
and we can see as above (in particular use Lemma \ref{lem1.4} for
$L =A$) that this monoid $M$ is totally ordered by the rule
$$ [y] \leq [x] \ \Leftrightarrow \ [A:x] \subset [A:y]. $$
One further verifies (cf. \cite{HV1}) that the map
$$
v:= v_{ A}: =v_{R,A}:R \ds \to M $$ is an $m$-valuation on the
semiring $A$ with support
$$\mfq := \{ x\in R \ds | Rx \subset A\} $$
which  again is a prime ideal of $R$.

\begin{defn}\label{defn1.17} \cite{HV1}.We call any $m$-valuation
$v$ on $R$ which is equivalent to $v_{R,A} $  for some proper
CMC-subsemiring $A$ of $R$ a \textbf{$\bf V$-valuation} on $R$
and we call $v_{R,A} $ the \textbf{$\bf V$-valuation of $R$
associated to $A$.}
\end{defn}

  \emph{Historical comments:} In \cite{HV1}
these valuations have been dubbed ``finite $V$-valuations" by
Harrison, to give credit to Marie A. Vitulli, who conveyed to him
the idea of this construction and in addition a construction of
``infinite $V$-valuations", a type of absolute values having an
archimedian flavour. We will not deal with infinite $V$-valuations
here; hence we simply speak of finite $V$-valuations as
``$V$-valuations".

Harrison and Vitulli report that already M. Griffin defined
(finite) $V$-valuations in an unpublished paper \cite{Gr}, but
then did not pursue this idea further \cite[p.~269]{HV1}.

$\Vz$-valuations have been introduced - in the case of rings -  by
D. Zhang \cite{Z}. \hfill\quad \qed

\pSkip

 If $v = v_{R,A}$ for some proper CMC-subsemiring $A$ of
$R$, then it easily checked  that $v$ has the valuation semiring
$A_v = A$. Further it turns out that the valuation ideal $\mfp_v$
of $v$ is the set  \begin{equation}\label{eq:1.3-0} P(A) := \{
x\in R \ds | \exists y \in R \setminus A : xy \in A \}.
\end{equation}
In particular $P(A)$ is a prime of $R$, hence a prime ideal of
$A$, a fact which for $R$ a ring had been already been observed by
P. Samuel in his seminal paper \cite{S}. \{Samuel's direct proof
also  goes through verbatim  for $A$ a semiring.\}

\begin{defn}\label{defn1.18} We call $P(A)$ the \textbf{central
prime} (in  $R$) of the proper CMC-subsemiring~A.
\end{defn}

Parallel to Lemma \ref{lem1.9} we obtain the following:

\begin{lem}\label{lem1.19} \cite{HV1}.
Let $v: R\twoheadrightarrow  M$ be a surjective $V$-valuation.
Then the bipotent semiring $M$ has the following separation
property:
 \begin{align*}& \text{\emph{($\Sep_V$)}}: \ \text{If}\ \al ,\bt \in M \ \text{and}\
 \al <\bt ,\ \text{there exists some} \  \gm \in
 M\\   & \qquad \qquad \text{with}\  \al \gm  \leq 1\ \text{and}\
\bt \gm  >  1.
\end{align*}
\end{lem}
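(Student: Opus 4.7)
The plan is to mimic the proof of Lemma 1.9 almost verbatim, taking advantage of the structural parallel between $V$- and $\Vz$-valuations: the roles of the prime $\mfp$ and the inequality ``$v(x)<1$'' are now played by the CMC-subsemiring $A$ and the inequality ``$v(x)\le 1$''. Concretely, after replacing $v$ by an equivalent $V$-valuation, I may assume $v=v_{R,A}$ for some proper CMC-subsemiring $A$ of $R$, so that by construction
$$
v(x)\le v(y)\ \Leftrightarrow\ [A:y]\subset[A:x]
$$
for all $x,y\in R$, and consequently $x\in A\Leftrightarrow v(x)\le 1$, while $x\in R\setminus A\Leftrightarrow v(x)>1$.

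Given $\al,\bt\in M$ with $\al<\bt$, I would use the surjectivity of $v$ to pick $x,y\in R$ with $v(x)=\al$ and $v(y)=\bt$. Then the strict inequality $v(x)<v(y)$ translates, via the equivalence above, into the strict containment $[A:y]\subsetneq[A:x]$. Hence I can choose an element $z\in[A:x]\setminus[A:y]$, i.e.\ an element $z\in R$ with $zx\in A$ but $zy\notin A$. Setting $\gm:=v(z)\in M$, the first condition gives $\al\gm=v(zx)\le 1$ and the second gives $\bt\gm=v(zy)>1$, which is exactly the desired separation property $(\Sep_V)$.

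There is no real obstacle here; the argument is essentially a transcription of the one for Lemma \ref{lem1.9}. The only point that requires a bit of care is the difference in strictness between the two setups: in the $\Vz$-case the set $\mfp$ is cut out by the \emph{strict} inequality $v<1$, yielding the asymmetric conclusion ``$\al\gm<1$, $\bt\gm\ge 1$'', whereas in the present $V$-case the set $A$ is cut out by the \emph{non-strict} inequality $v\le 1$, which correctly produces the asymmetric conclusion ``$\al\gm\le 1$, $\bt\gm>1$'' of $(\Sep_V)$. Once this matching between the membership relations in $A$ (resp.\ $R\setminus A$) and the inequalities on $v$-values is noted, the proof goes through in a few lines.
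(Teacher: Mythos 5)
Your proof is correct and is exactly the argument the paper intends: Lemma \ref{lem1.19} is stated as "parallel to Lemma \ref{lem1.9}," and your transcription (replacing $\mfp$ by $A$ and "$v<1$" by "$v\le 1$", using that $v(x)<v(y)$ gives the strict containment $[A:y]\subsetneqq[A:x]$ and that $zx\in A$, $zy\notin A$ translate to $\al\gm\le 1$, $\bt\gm>1$) is precisely the analogous proof. Nothing is missing.
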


\begin{defn}\label{defn1.20} For any bipotent semiring $M$ we
introduce the CMC-subsemiring
$$ A_M := \{ x\in M \ds | x \leq 1\},$$
and we call $M$ a \textbf{proper bipotent semiring} if $A_M \neq
M$. \{N.B. If $M$ is cancellative this means that $M$ is
``unbounded", i.e., does not have a largest element.\} We call the
bipotent semiring $M$ a \textbf{$\bf V$-semiring} if $M$ is proper
and has the separation property ($\Sep_V$).
\end{defn}

We now  can deduce results parallel to Theorem \ref{thm1.11} --
Proposition \ref{prop1.15} by arguing in exactly the same way as
above. We first obtain

\begin{thm}\label{thm1.21}
Let  $R$ be a semiring, $M$ a $V$-semiring, and $v:R\to M$  a
surjective map with $v(0)=0,$ $v(1)=1$, $v(xy)=v(x)v(y)$ for all
$x,y\in R$ (i.e., $v$ is a homomorphism from the monoid $(R,\cdot
\; )$ onto the monoid $(M,\cdot \; )).$ Assume  that
\begin{equation}\label{1.3}
\forall x,y\in R: v(x)\leq 1,v(y) \leq1\ \Rightarrow\ v(x+y)\leq
1.
\end{equation}
Let $A :=\{x\in R|v(x) \leq 1\}.$ Then $A$ is a proper
CMC-subsemiring of $R$, and $v$ is a $V$-valuation equivalent
to~$v_{R,A }.$
\end{thm}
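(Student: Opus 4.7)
The plan is to mimic the proof of \thmref{thm1.11}, substituting the CMC condition for the prime condition and the closed half-line $\{\alpha \leq 1\}$ for the open half-line $\{\alpha < 1\}$. Three tasks must be carried out: verify that $A$ is a proper CMC-subsemiring of $R$, show that the preorder on $R$ induced by $v$ coincides with the one induced by divisibility of $A$, and extract an isomorphism $M \iso M(R,A)$ that transports $v$ to $v_{R,A}$.

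For the first task, membership of $0$ and $1$ in $A$ is immediate from $v(0)=0$ and $v(1)=1$, and $A+A \subset A$ is precisely the hypothesis \eqref{1.3}. The closedness of $A$ and of $R \setminus A$ under multiplication both fall out of multiplicativity of $v$ together with the bipotence of $M$: in a totally ordered monoid, $\alpha \leq 1$ and $\beta \leq 1$ imply $\alpha \beta \leq 1$, while $\alpha > 1$ and $\beta > 1$ imply $\alpha \beta > 1$. Properness of $A$ is where the assumption that $M$ is proper enters: there exists some $\alpha \in M$ with $\alpha > 1$, and surjectivity of $v$ yields $x \in R$ with $v(x)=\alpha$, whence $x \notin A$.

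For the second task I would prove, exactly as in the proof of \thmref{thm1.11}, the equivalence
\[
\forall x,y \in R: \quad v(x) > v(y) \ \Leftrightarrow \ [A:x] \varsubsetneq [A:y].
\]
The ``$\Rightarrow$'' inclusion $[A:x] \subset [A:y]$ is routine: if $v(xz) \leq 1$, then $v(yz) = v(y)v(z) \leq v(x)v(z) = v(xz) \leq 1$ by monotonicity of multiplication in the ordered monoid $M$. The strict part of ``$\Rightarrow$'' is the key step, and the main obstacle: here I would invoke the separation property $(\Sep_V)$ of the $V$-semiring $M$ to obtain $\gamma \in M$ with $v(y)\gamma \leq 1$ and $v(x)\gamma > 1$; surjectivity then supplies $z \in R$ with $v(z)=\gamma$, giving $z \in [A:y] \setminus [A:x]$. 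Conversely, if $[A:x] \varsubsetneq [A:y]$, any witness $z$ gives $v(yz) \leq 1 < v(xz)$, so $v(y)v(z) < v(x)v(z)$, and the order-compatibility (contrapositive of monotonicity) in the totally ordered monoid $M$ forces $v(y) < v(x)$.

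For the third task, set $w := v_{R,A}$ and $N := M(R,A)$. The displayed equivalence shows in particular that $v(x)=v(y) \Leftrightarrow w(x)=w(y)$, so the assignment $\gamma(v(x)) := w(x)$ defines a bijection $\gamma: M \to N$. By construction it sends $0 \mapsto 0$, $1 \mapsto 1$ and is multiplicative, while the equivalence also shows it is order-preserving; since addition on a bipotent semiring is determined by its total order, $\gamma$ is a semiring isomorphism. As $\gamma \circ v = w$, the supervaluations $v$ and $v_{R,A}$ are equivalent, proving that $v$ is a $V$-valuation. I do not anticipate any obstruction beyond the appeal to $(\Sep_V)$ highlighted above; this is precisely the property that was built into the definition of a $V$-semiring to make such a reconstruction possible.
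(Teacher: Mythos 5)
Your proof is correct and follows precisely the route the paper itself prescribes: the paper states Theorem~\ref{thm1.21} without proof, observing only that it is obtained ``by arguing in exactly the same way as'' in Theorem~\ref{thm1.11}, and your argument is exactly that adaptation, with $A$ in place of $\mfp$, the closed half-line in place of the open one, and $(\Sep_V)$ supplying the crucial strictness in place of $(\Sep^0_V)$. The verification that $A$ is a proper CMC-subsemiring, the equivalence $v(x)>v(y) \Leftrightarrow [A:x] \varsubsetneqq [A:y]$, and the extraction of the order-preserving multiplicative bijection $\gamma: M \to M(R,A)$ are all handled correctly.
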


Then, given any surjective $m$-valuation $v: R \twoheadrightarrow
M$ with $M$ proper, we obtain the  $\bf V$-\textbf{coarsening}
$$ \uarv: R \ds \twoheadrightarrow \uarM$$
of $v$, which is the finest coarsening of $v$ to a $V$-valuation.
This  is associated to the CMC-subsemiring $A := A_v$ of $R$.

Starting with a proper bipotent semiring $M$ we may apply this to
$v = \id_M$ and then get a surjective homomorphism
$$\gm_V := \gm_{M,V} := (\id_M)^\uparrow:
M\twoheadrightarrow \uarM,$$ with $\uarM$ a $V$-semiring. We have
\begin{equation}\label{eq:1.4} \gm_V(x) \leq \gm_V(y) \quad
\text{iff} \quad [A_M:x] \supset [A_M:y].
\end{equation}
Finally we observe that the $V$-coarsening of any surjective
$m$-valuation $v:R \to M $ with $M$ proper is given by
\begin{equation}\label{eq:1.5} \uarv = \gm_{M,V} \circ v.
\end{equation}

\section{Turning $m$-valuations into valuations}\label{sec:2}

In  \S\ref{sec:1} we have seen classes of surjective
$m$-valuations $v:R\to M,$ where the bipotent semiring $M$ in
general has no reason to be cancellative, i.e., $v$ is not a
valuation. The problem arises how to handle such true
$m$-valuations in a supertropical context.

Our way to do this is to find a coarsening $w: R \onto N$ of $v$,
as ``slight" as possible, which is a valuation and has the same
support as $v$, $w^{-1} (0)  = v^{-1} (0) $. Then, as explained
below in~\S\ref{sec:3}, we can interpret  $v$ as a tangible
supervaluation covering $w$.

Given any bipotent  semiring $M$, we look for a homomorphic
equivalence relation  $C$ on $M$, as fine as possible, such that
the semiring $M/C$ is cancellative. If it happens that~$\{ 0 \}$
is a $C$-equivalence class in $M$, then for any surjective
\m-valuation $v:R \onto M$ we will have the valuation
$$ w := v/C := \pi_C \circ v : R \onto M/C$$
at our disposal, and  $w^{-1} (0)  = v^{-1} (0) $, as desired.

 We always assume that $M$ is different from the  zero ring $\{0\}.$ Let $\mfq$ denote
the nilradical of $M,$ i.e.,
$$\mfq:=\Nil M=\{x\in M\ds|\exists n \in\N:x^n=0\}=\sqrt{\{0\}}.$$
%
\begin{lem} $\mfq$ is a lower set and a prime ideal of $M$.

\end{lem}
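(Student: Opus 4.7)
The plan is to check, in turn, that $\mfq$ is downward-closed, closed under addition and scalar multiplication, proper, and satisfies the prime property, exploiting in each case a different feature of the bipotent structure: either the addition rule $x+y = \max(x,y)$, the monotonicity of multiplication in the totally ordered monoid $(M,\cdot)$, or the total order itself.

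First, for the lower-set property, I would take $x \in \mfq$ with $x^n = 0$ and $y \le x$. Iterated application of the compatibility of multiplication with the order gives $y^n \le x^n = 0$, and since $0$ is the least element, $y^n = 0$, so $y \in \mfq$.

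Next, for the ideal property, closure under addition is immediate from bipotence, since $x+y = \max(x,y)$ coincides with one of the summands and thus stays in $\mfq$. Closure under scalar multiplication uses commutativity: if $x^n = 0$, then $(mx)^n = m^n x^n = 0$ for every $m \in M$. Since $M \neq \{0\}$ and $1^n = 1 \neq 0$, we have $1 \notin \mfq$, so $\mfq$ is proper.

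The main step---and in fact the only genuinely nontrivial one---is primeness. Assume $xy \in \mfq$, so $(xy)^n = x^n y^n = 0$ for some $n$. Here I would invoke the total order to assume without loss of generality that $x \le y$; iterated monotonicity of multiplication then gives $x^n \le y^n$. Multiplying this inequality by $x^n$ yields
\[
x^{2n} \le x^n y^n = 0,
\]
whence $x^{2n} = 0$ and $x \in \mfq$. The potential obstacle is that in a general commutative semiring with zero-divisors the nilradical need not be prime; the total order of the bipotent semiring is precisely what converts $x^n y^n = 0$ into nilpotence of the smaller factor, rescuing the argument.
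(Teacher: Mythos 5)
Your proof is correct, and the lower-set and ideal steps track the paper's proof closely (the paper telescopes $y^n \geq y^{n-1}x \geq \cdots \geq x^n$, you pass directly from $y\le x$ to $y^n \le x^n$; these are the same idea). Where you genuinely diverge is the primeness step. The paper first notes that $x\le y$ and $xy\in\mfq$ force $x<1$ (otherwise $1\le xy$ and the lower set $\mfq$ would contain $1$), then multiplies $x\le y$ by $x$ to get $x^2 \le xy$ and invokes the already-proved lower-set property of $\mfq$ to conclude $x^2\in\mfq$. You instead work directly from nilpotence: expand $(xy)^n = x^n y^n = 0$, compare $x^n \le y^n$ by iterated monotonicity, and multiply by $x^n$ to land in $x^{2n} \le 0$. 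Your argument is somewhat more elementary, bypassing both the auxiliary observation $x<1$ and the reliance on the lower-set property as a black box, at the modest cost of manipulating higher powers; the paper's version is a bit slicker once the lower-set property is in hand. Both hinge in the same place on the totally ordered, order-compatible multiplicative structure of a bipotent semiring with $0$ as least element, which is exactly what fails for nilradicals of general commutative semirings.
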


\begin{proof} a) Assume that $x \leq y$ and $y^n = 0$. Clearly $y
\leq 1$, hence $y^n \leq 1$ for all $n \in \N$. Thus
$$ 0 = y^n \geq y^{n-1} x \geq \dots \geq y x^{n-1} \geq x^n,$$
and we conclude that $x^n =0$. Thus $\mfq$ is a lower set of $M$.
\pSkip

b) Clearly $y \cdot M \subset \mfq$ for any  $y \in \mfq $. Also
$1 \notin \mfq$. Thus $\mfq$ is a proper ideal of $M$. \pSkip

c) Let $x,y \in M$ be given with $xy \in \mfq$, and assume that $x
\leq y$. We have $x < 1$. Indeed, $x \geq 1$ would imply $xy \geq
y \geq 1$, but the lower set $\mfq$ does not contain $1$. It
follows that $x^2 \leq xy$. Since $\mfq$ is a lower set, we infer
that $x^2 \in \mfq$, hence $x \in \mfq$. Thus the ideal $\mfq$ is
prime.
\end{proof}

Notice that $\mfq=\{0\}$ iff $M\setminus\{0\}$ is closed under
multiplication, i.e., $M$ is a semidomain. \pSkip

Our ansatz for C is the following binary relation on the set $M$.
\begin{equation} x\sim_Cy\Leftrightarrow \left\{ \begin{array}{l}
  \text{Either\ }                         x,y\in\mfq, \\[1mm]
                                           \text{or there exists some}\ s\in
M\setminus\mfq\ \text{with}\ sx=sy. \\
                                         \end{array}\right.
\end{equation}

We verify that this is an equivalence relation on the set $M.$
Only transitivity needs a proof. Let $x,y,z\in M$ be given with
$x\sim_Cy$ and $y\sim_Cz.$ If at least one of the elements $x,y,z$
lies in $\mfq$, then all are  in $\mfq.$ Otherwise we have
elements $s,t$ in $M\setminus\mfq$ with $sx=sy$ and $ty=tz.$ This
implies $stx=stz$ and $st\in M \setminus \mfq.$ Thus $x\sim_C z$
in both cases.

\begin{thm}\label{thm2.1}\quad {}

\begin{enumerate}
\item[a)] $C$ is a homomorphic equivalence relation on $M.$ Thus
we have a unique structure of a (bipotent) semiring on $M/C$ such
that the natural map
$$\pi_C: M\to M/C,\quad x\mapsto [x]_C$$
is a homomorphism.

\item[b)] $M/C$ is cancellative and $\pi_C^{-1}(0)=\mfq
:=\Nil M.$

\item[c)] If $\gm : M\to N$ is a homomorphism from $M$ to a
cancellative semiring $N$ with $\gm ^{-1}(0)=\mfq,$ then $\gm $
factors through $\pi_C$ in a unique way.
\end{enumerate}
\end{thm}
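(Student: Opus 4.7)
The plan is to proceed through the three parts in order, with the main technical work falling in part (a). With transitivity of $\sim_C$ already established in the discussion preceding the theorem, for (a) it remains to show that $\sim_C$ is compatible with both multiplication and addition. Multiplicative compatibility is immediate: if $x, y \in \mfq$, then $xz, yz \in \mfq$ because $\mfq$ is an ideal; and if $sx = sy$ with $s \notin \mfq$, then $s(xz) = s(yz)$. For additive compatibility, the case where $x \sim_C y$ via the second clause of the definition is handled in one line using distributivity: $s(x+z) = sx + sz = sy + sz = s(y+z)$. The delicate case is when $x, y \in \mfq$. Here I will use simultaneously that $M$ is bipotent (so $+ = \max$) and that $\mfq$ is a lower set, via a short case split on the positions of $z$ relative to $x$ and $y$: if $z$ dominates both, then $x+z = z = y+z$; if $z$ lies below $x$ or below $y$, then $z \in \mfq$ by the lower-set property, so both sums lie in $\mfq$ and are therefore $\sim_C$-equivalent. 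The bipotent semiring structure on $M/C$ making $\pi_C$ a homomorphism is then automatic from general principles.

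For part (b), I first identify the fibre $\pi_C^{-1}(0)$. The inclusion $\mfq \subset \pi_C^{-1}(0)$ is by the first clause of $\sim_C$ applied to $0 \in \mfq$. Conversely, if $x \sim_C 0$ via the second clause, there is $s \notin \mfq$ with $sx = 0 \in \mfq$, and primality of $\mfq$ forces $x \in \mfq$. For cancellativity, I suppose $[s] \neq 0$ (so $s \notin \mfq$) and $[sx] = [sy]$, and split on the two clauses of $\sim_C$ applied to the pair $(sx, sy)$. If both lie in $\mfq$, then $s^n x^n = (sx)^n = 0$ for some $n$; since $s \notin \mfq$ and $\mfq$ is prime, $x \in \mfq$, and similarly $y \in \mfq$, so $[x] = [y] = 0$. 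If instead $t(sx) = t(sy)$ for some $t \notin \mfq$, then $ts \notin \mfq$ by primality, directly witnessing $x \sim_C y$.

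Part (c) is then the universal property. Given $\gamma: M \to N$ with $N$ cancellative and $\gamma^{-1}(0) = \mfq$, I define $\bar{\gamma}([x]_C) := \gamma(x)$ and check well-definedness using the two clauses of $\sim_C$: if $x, y \in \mfq$ both map to $0$, and if $sx = sy$ with $s \notin \mfq$, then $\gamma(s) \neq 0$ and cancellativity of $N$ forces $\gamma(x) = \gamma(y)$. That $\bar{\gamma}$ is a semiring homomorphism satisfying $\bar{\gamma} \circ \pi_C = \gamma$, and is unique with this property, is standard once well-definedness is in hand, using surjectivity of $\pi_C$. The main obstacle of the whole proof is the additive compatibility of $\sim_C$ in the case $x, y \in \mfq$, which is where the lower-set property of $\mfq$ (and hence bipotency of $M$) genuinely enters; the remaining steps reduce quickly to the definition of $\sim_C$ and to the primality of $\mfq$ proved in the preceding lemma.
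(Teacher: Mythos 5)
Your proof is correct and follows essentially the same strategy as the paper's: the key lemma is that $\mfq = \Nil M$ is a lower set and a prime ideal of $M$, and this drives both the additive compatibility of $\sim_C$ in part (a) and the primality arguments in parts (b) and (c). The only cosmetic differences are in how you organize the case splits — in (a) you split on the position of $z$ relative to $x,y$ whereas the paper splits on $z\in\mfq$ versus $z\notin\mfq$ (both then invoke the lower-set property), and in (b) you split on the two clauses of the definition of $\sim_C$ applied to $(sx,sy)$ whereas the paper splits on whether $x\in\mfq$ — but these lead to the same conclusions with the same ingredients.
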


\begin{proof} a): Let $x,y,z\in M$ be given with $x\sim_C y.$ It
is fairly obvious that $xz\sim_Cyz.$ We have to verify that also
$x+z\sim_Cy+z.$

\begin{description}
\item[Case 1] $x,y\in\mfq.$ If $z\in\mfq,$ then $x+z,y+z\in\mfq.$
If $z\notin \mfq,$ then $x<z$ and $y<z$, hence $x+z=z=y+z.$

\item[Case 2] $x,y\notin\mfq.$ There exists some $s\in
M\setminus\mfq$ with $sx=sy.$ It follows that $s(x+z)=s(y+z).$
\end{description}

Thus $x+z\sim_C y+z$ in both cases. We have verified that the
equivalence relation $C$ is homomorphic. \pSkip

b): By definition $x\sim_C 0$ iff $x\in\mfq.$ Thus the
homomorphism $\pi_C:M\twoheadrightarrow M/C$ has the kernel
$\pi_C^{-1}(0)=\mfq.$ We now verify that $M/C$ is cancellative.
Let $x,y,z\in M$ be given with $[z]_C\ne0$ and
$[x]_C\cdot[z]_C=[y]_C\cdot[z]_C.$ In other words, $z\in
M\setminus \mfq$ and $xz\sim_C yz.$ We want to prove that
$[x]_C=[y]_C,$ i.e., $x\sim_Cy.$

If $x\in\mfq,$ we have $xz\in\mfq,$ hence $yz\in\mfq.$ Since
$z\notin\mfq$, this implies $y\in\mfq.$ Assume now that $x,y\notin
\mfq.$ Then $xz,yz\notin \mfq.$ Since $xz\sim_Cyz$ there exists
some $s\in M\setminus \mfq$ with $xzs=yzc.$ We have $zs\in
M\setminus\mfq,$ hence $x\sim_C y$ again. \pSkip

c): Let $\gm : M\to N$ be a homomorphism with $N$ a cancellative
semiring and $\gm ^{-1}(0)=\mfq.$ Given $x,y\in M$ with $x\sim_C
y$ we want to verify that $\gm (x)=\gm (y).$

\begin{description}
\item[Case 1] $x\in\mfq.$ Then $y\in\mfq.$ Since $x,y$ are nilpotent
and $N$ is a semidomain, we conclude that $\gm (x)=0=\gm (y).$
\{N.B. Here we did not yet need the hypothesis that $\gm
^{-1}(0)=\mfq.$\} \pSkip

\item[Case 2] $x\notin\mfq.$ Now $y\notin\mfq,$ and there exists
some $s\in M\setminus \mfq$ with $sx=sy.$ It follows that $\gm
(s)\gm (x)=\gm (s)\gm (y).$ Since $\gm ^{-1}(0)=\mfq,$ we have
$\gm (s)\neq 0.$ Since $N$ is cancellative, we conclude that $\gm
(x)=\gm (y)$ again. \end{description}

Thus $\gm $ induces a well-defined map $\bar\gm : M/C\to N,$ given
by
$$\bar\gm ([x]_C):=\gm (x)$$
for all $x\in M.$ This is a homomorphism, and $\bar\gm
\circ\pi_C=\gm .$ Since $\pi_C$ is surjective, we have no other
choice for $\bar\gm .$
\end{proof}

\begin{notations} $ $
\begin{enumerate}\eroman
    \item  We call $C$ the \bfem{minimal cancellative
relation} on $M.$ If necessary we more precisely write $C(M)$
instead of $C.$
    \item  If $v: R\to M$ is an $m$-valuation on a semiring $R,$ we
denote the coarsening
$$\pi_C\circ v: R\to M\to M/C$$
by $v/C.$ It is a valuation.
\end{enumerate}
\end{notations}

If $v$ is a $V^0$-valuation or a $V$-valuation, the question
arises whether $v/C$ is again $V^0$ or $V.$ In order to attack
this problem, it will be helpful to introduce two more classes of
$m$-valuations.

\begin{defn}\label{def2.3}  $ $
\begin{enumerate}\ealph
\item We say that an $m$-valuation $v: R\to M$ on a semiring $R$
has \bfem{unit incapsulation} (abbreviated \bfem{UIC}), if for any
$x,y\in R$ with $v(x)<v(y)$ there exists some $z\in R$ with
$$v(xz)\le 1\le v(yz).$$

\item We say that $v$ has \bfem{strict UIC}, if for any $x,y\in R$
with $v(x)<v(y)$ there exists some $z\in R$ with
$$v(xz)<1<v(yz).$$

\item  We say that a bipotent semiring $M$ has UIC (resp. strict
UIC), if the $m$-valuation $\id_M: M\to M$ has UIC (resp. strict
UIC). In other words, if for any $\al <\bt $ in $M$ there exists
some $\gm \in M$ with $\al \gm \le 1\le \bt \gm $ (resp. $\al \gm
<1<\bt \gm ).$

\end{enumerate}
\end{defn}

We have the chart of implications

$$\xymatrix{
\text{strict}\ UIC\ar @{=>}[r] & V\wedge V^0\ar@{=>}[d] \ar
@{=>}[r] \ar
 @{=>}[r] & V^0\ar@{=>}[d]\\ & V
  \ar @{=>}[r]  & UIC \ .
 }$$

The class UIC is particularly useful for the following property
not shared by the other classes.

\begin{lemma}\label{lem2.4}
If $v: R\to M$ has UIC and $w$ is a coarsening of $v,$ then also
$w$ has UIC.\end{lemma}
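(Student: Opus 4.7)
The plan is a short, direct argument using the factorization of a coarsening through a bipotent semiring homomorphism.

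Recall that $w$ being a coarsening of $v$ means there is a semiring homomorphism $\gm\colon M \to N$ of bipotent semirings (so $\gm$ preserves $0$, $1$, and order) with $w = \gm\circ v$. First I would check the contrapositive-style fact that $w(x) < w(y)$ forces $v(x) < v(y)$: since $\gm$ is order preserving, $v(x) > v(y)$ would give $w(x) \ge w(y)$, and $v(x) = v(y)$ would give $w(x) = w(y)$; both contradict $w(x) < w(y)$. (In a totally ordered monoid, only the strict inequality $v(x) < v(y)$ is left.)

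Then I would apply UIC of $v$ to this strict inequality: there exists $z \in R$ with
$$v(xz) \le 1 \le v(yz).$$
Applying the order-preserving homomorphism $\gm$ and using $\gm(1)=1$ yields
$$w(xz) = \gm(v(xz)) \le 1 \le \gm(v(yz)) = w(yz),$$
which is precisely UIC for $w$.

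There is essentially no obstacle here; the only mild point is making sure one uses that $\gm$ preserves strict inequality into non-strict inequality (hence can only produce $w(xz) \leq 1 \leq w(yz)$ and not the strict version), which explains why the analogous statement for \emph{strict} UIC does \emph{not} follow from this argument. Thus the lemma is specific to UIC and not to its strict variant.
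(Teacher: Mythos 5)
Your argument is correct and essentially identical to the paper's: both factor the coarsening as $w=\gm\circ v$ with $\gm$ an order-preserving semiring homomorphism, deduce $v(x)<v(y)$ from $w(x)<w(y)$, apply UIC to $v$, and push the resulting inequalities through $\gm$. The only small omission is the preliminary remark that one may assume $v$ surjective so that the homomorphism $\gm$ actually exists; your closing observation about why the argument does not carry over to strict UIC is a nice additional point not in the paper's proof.
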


\begin{proof} We may assume that $v$ is surjective. Then
$w=\gm \circ v$ with $\gm : M\to N$ a semiring homomorphism. If
$x,y$ are elements of $R$ with $w(x)<w(y),$ then also $v(x)<v(y),$
hence there exists some $z\in R$ with $v(xz)\le 1\le v(yz),$ and
this implies $w(xz)\le 1\le w(yz).$\end{proof}

\begin{lemma}\label{lem2.5}
Assume that $M$ is a bipotent semiring with UIC. Then there does
not exist a saturated ideal of $M$ different from $M$ and $\{0\}.$
In particular,  $\Nil M = \{ 0 \}$,  hence $M$ is a
semidomain.\end{lemma}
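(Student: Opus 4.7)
The plan is to establish the stronger claim directly: any saturated ideal $\mathfrak{a}$ of $M$ containing a nonzero element must in fact equal all of $M$. Reading ``saturated'' in the natural order-theoretic sense suggested by the preceding un-numbered lemma---namely, an ideal which is simultaneously a lower set---the ``in particular'' part then follows at once: that lemma already shows $\Nil M$ is both a lower set and an ideal of $M$, while $1\notin \Nil M$ forces $\Nil M\ne M$, leaving only $\Nil M=\{0\}$, which is exactly the assertion that $M$ is a semidomain.

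For the main claim, I would pick some $y\in\mathfrak{a}$ with $y>0$ and apply the UIC hypothesis on $M$ (Definition~\ref{def2.3}(c)) to the pair $0<y$. This yields some $\gamma\in M$ with $0\cdot\gamma\leq 1\leq y\gamma$. Setting $z:=y\gamma$, we have $z\in\mathfrak{a}$ because $\mathfrak{a}$ is an ideal and $y\in\mathfrak{a}$, and $z\geq 1$ by construction. Thus the saturated ideal $\mathfrak{a}$ contains an element lying above the unit of $M$.

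Now for any $x\in M$, multiplication by the fixed element $x$ is order-preserving on $M$ (this is automatic in a bipotent semiring, as addition is just max), so $z\geq 1$ gives $xz\geq x\cdot 1=x$. But $xz\in\mathfrak{a}$, again because $\mathfrak{a}$ is an ideal, and $\mathfrak{a}$ is a lower set, so $x\in\mathfrak{a}$. As $x\in M$ was arbitrary, $\mathfrak{a}=M$, completing the argument.

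The conceptual heart of the proof is the single use of UIC, applied with $\alpha=0$: it immediately upgrades any nonzero element of a saturated ideal to one sitting at or above $1$, and from there the lower-set closure alone sweeps up all of $M$. No case analysis or estimates are required. The only possible obstacle is notational---confirming that ``saturated'' here is meant in the lower-set-ideal sense rather than some other sense---but the surrounding discussion of $\Nil M$, which is the principal example to which the lemma is applied, makes this reading essentially forced.
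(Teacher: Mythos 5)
Your proof is correct and is essentially the paper's own argument: choose a nonzero element of the saturated ideal, invoke UIC against the pair $0 < y$ to produce an element $z = y\gamma \geq 1$ of the ideal, and then use saturation (equivalently, the lower-set property, as you rightly note they coincide for ideals of a bipotent semiring) to conclude the ideal is all of $M$. The only cosmetic difference is that the paper stops one step earlier — once $1 + xz = xz \in \mfq$ forces $1 \in \mfq$ the conclusion $\mfq = M$ is immediate — whereas you run the lower-set argument once per element $x \in M$; this is harmless.
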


\begin{proof}
Suppose $\mfq$ is a saturated ideal of $M$ different from $\{0\}.$
We choose some $x\in\mfq$ with $x\ne 0.$ Applying UIC to the pair
$0<x,$ we obtain some $z\in M$ with $1\le xz.$ Since $xz\in\mfq$
and $\mfq$ is saturated, the relation $1+xz=xz$ implies that
$1\in\mfq,$ hence $\mfq=M.$ Applying this to $ \Nil M$, we see
that $\Nil M=\{0\}.$
\end{proof}

It follows that, if the bipotent semiring $M$ has $UIC$, we have
the following simple description of the cancellation relation
$C=C(M):$
\begin{equation}\label{2.2}
x\sim_C y \ds \Leftrightarrow \exists z\in M\quad\text{with}\quad
z\ne0\quad\text{and}\quad xz=yz.
\end{equation}

Thus we can state
\begin{prop}\label{prop2.6}
If $v:R\to M$ is an $m$-valuation with UIC, then $w:=v/C$ can be
characterized as follows: For any $x,y\in R$
$$w(x)\le w(y) \ds \Leftrightarrow \exists z\in R\quad{with}\quad
v(z)\ne0\quad{and}\quad v(xz)\le v(yz).
$$
\end{prop}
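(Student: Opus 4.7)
My plan is to establish a chain of equivalences moving from $w(x) \leq w(y)$ in $M/C$, through the relation $\sim_C$ on $M$, applying the simplified description \eqref{2.2} available under UIC, and finally lifting back to $R$ via multiplicativity of $v$.

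First, I would reduce to the case that $v$ is surjective. Since $M$ is bipotent, the image $v(R)$ is closed under addition because $v(a)+v(b)=\max(v(a),v(b)) \in v(R)$; it is therefore a sub-semiring of $M$. Moreover, UIC of $v$ transfers to UIC of $v(R)$: if $\al<\bt$ in $v(R)$ with $\al=v(x),\ \bt=v(y)$, the witness $z\in R$ produced by UIC of $v$ gives $v(xz)\le 1\le v(yz)$, i.e., $\gm:=v(z)\in v(R)$ satisfies $\al\gm\le 1\le\bt\gm$. Replacing $M$ by $v(R)$ alters $v/C$ only up to equivalence, so we may assume $v$ is surjective and $M$ itself has UIC. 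Then \lemref{lem2.5} yields $\Nil M=\{0\}$, and the simple characterisation \eqref{2.2} of $C=C(M)$ becomes available: $\al\sim_C\bt$ iff there exists $z_0\in M\sm\{0\}$ with $\al z_0=\bt z_0$.

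Next, I would unpack $w(x) \leq w(y)$. Since $M/C$ is bipotent with the order induced by addition, $w(x)\le w(y)$ is equivalent to $[v(x)]_C + [v(y)]_C = [v(y)]_C$ in $M/C$, i.e., $v(x)+v(y) \sim_C v(y)$ in $M$. By \eqref{2.2} this is equivalent to the existence of $z_0 \in M\sm\{0\}$ with $(v(x)+v(y))z_0 = v(y)z_0$; distributing and invoking the bipotence of $M$, this is in turn equivalent to $v(x)z_0 \le v(y)z_0$. Surjectivity of $v$ now lets me write $z_0 = v(z)$ for some $z\in R$ with $v(z)\ne 0$, and multiplicativity of $v$ translates the inequality into $v(xz) = v(x)v(z) \le v(y)v(z) = v(yz)$. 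The converse direction is immediate: given $z \in R$ with $v(z)\neq 0$ and $v(xz)\le v(yz)$, set $z_0:=v(z)$ and run the chain backwards.

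The main obstacle, insofar as there is one, is the first step: one must verify that the reduction to surjective $v$ is harmless -- that UIC passes cleanly from $v$ to the sub-semiring $v(R)\subset M$, and that $v/C$ is equivalent as an $m$-valuation before and after this replacement. Once this is in place, the remainder is a straightforward unwinding of \eqref{2.2} combined with the bipotent order on $M/C$.
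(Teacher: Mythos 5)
Your chain of equivalences, once $v$ is taken surjective, is correct and is precisely the argument implicit in the paper --- Proposition~\ref{prop2.6} is stated without proof as an immediate consequence of \eqref{2.2}: unwind $w(x)\le w(y)$ as $v(x)+v(y)\sim_C v(y)$, apply \eqref{2.2} to produce $z_0\in M\setminus\{0\}$ with $v(x)z_0\le v(y)z_0$, and lift $z_0$ back to $R$ by surjectivity. The converse direction, as you note, runs backwards and in fact does not require surjectivity at all (one only needs $v(z)\ne 0\Rightarrow w(z)\ne 0$, which follows from UIC since $v(R)\cap\Nil M=\{0\}$, together with cancellativity of $M/C$).

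The reduction to the surjective case, however, is a genuine gap. The relation $C=C(M)$ appearing in $w=\pi_C\circ v$ depends on the ambient semiring $M$, not merely on $v(R)$: restricting $\sim_{C(M)}$ to $v(R)$ can be strictly coarser than $\sim_{C(v(R))}$, since a witness $s\in M\setminus\Nil M$ with $s\,v(x)=s\,v(y)$ need not be replaceable by one inside $v(R)$. Consequently ``replacing $M$ by $v(R)$'' can change the equivalence class of $v/C$, and the proposition as literally stated actually fails for non-surjective $v$. For instance, let $R=(\BR_{\ge 0},\max,\cdot)$ and form the bipotent semiring $M=R\cup\{T\}$ by adjoining a top element $T$ with $T\,x=T$ for all $x\ne 0$. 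The inclusion $v\colon R\hookrightarrow M$ has UIC (for $a<b$ take $z=b^{-1}$), and $T\cdot 1=T=T\cdot 2$ forces $v(1)\sim_{C(M)}v(2)$, so $w(2)\le w(1)$; yet no $z\in R$ with $v(z)\ne 0$ satisfies $v(2z)\le v(z)$, since $R$ is cancellative. Here $v(R)/C(v(R))\cong R$, while $M/C(M)$ is the two-element Boolean semiring, so the two coarsenings are not equivalent. The repair is simply to include surjectivity of $v$ as a hypothesis (clearly intended, given that the derivation goes through Lemma~\ref{lem2.5} and \eqref{2.2}); with that in place, your proof minus the reduction step is exactly the paper's.
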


\begin{schol}\label{schol2.7}
Taking into account our explicit construction of $V^0$- and
$V$-valuations in \S\ref{sec:1}, we see the following: Let $R$ be
any semiring.
\begin{enumerate}
\item[i)] If $v$ is the $V^0$-valuation associated to a prime
$\mfp$ of $R$, then for any $x,y\in R$
\begin{equation}\label{2.3}
(v/C)(x)\le (v/C)(y) \ \Leftrightarrow \ \exists z\in R\
\text{with}\  Rz\not\subset\mfp\ \text{and}\
[\mfp:xz]\supset[\mfp:yz].\end{equation} \item[ii)] If $v$ is the
$V$-valuation associated to a proper CNC-subring $A$ of $R,$ then
for any $x,y\in R$
\begin{equation}\label{2.4}
(v/C)(x)\le (v/C)(y) \ \Leftrightarrow  \ \exists z\in R\
\text{with}\ Rz\not\subset A\ \text{and}\ [A:xz]\supset
[A:yz].\end{equation}
\end{enumerate}
\end{schol}

We want to exhibit good cases, in which $v/C$ is a
$V^0$-valuation, or a $V$-valuation or even has strict UIC. For
that reason we analyze under which additional assumption a
cancellative bipotent semiring $M$ has one of the properties
$V^0,$ $V,$ strict UIC. We will use a self-explanatory notation.
For example, $M_{>1}$ denotes the set $\{x\in M|x>1\}.$

\begin{thm}\label{thm2.8} Assume that $M$ is a cancellative
bipotent semiring with UIC and $M_{<1}\ne \{0\}.$
\begin{enumerate}
\item[i)] If $M_{<1}$ has a biggest element, then $M$ is a
$V^0$-semiring. \item[ii)] If $M_{>1}$ has a smallest element,
then $M$ is a $V$-semiring. \item[iii)] If $M_{>1}=\emptyset,$
then trivially $M$ is a $V^0$-semiring. \item[iv)] Otherwise $M$
has strict UIC.\end{enumerate}\end{thm}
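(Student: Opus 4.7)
The plan is to handle the four subcases in turn, starting in every case from a UIC-witness $\gamma_0$ with $\alpha\gamma_0\le 1\le \beta\gamma_0$ for the given pair $\alpha<\beta$, and then upgrading this weak separation to the stronger one demanded by $(\Sep^0_V)$, $(\Sep_V)$, or strict UIC by multiplying $\gamma_0$ by a carefully chosen scalar. Cancellativity of $M$, combined with $\alpha<\beta$ and $\gamma_0\ne 0$, is the workhorse that turns $\le$ into strict $<$ whenever one of the two inequalities is known to be strict.

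For (i), let $\mu$ be the biggest element of $M_{<1}$; it is nonzero because $M_{<1}\ne\{0\}$. If $\alpha\gamma_0<1$, then $\gamma_0$ already witnesses $(\Sep^0_V)$; otherwise $\alpha\gamma_0=1$ forces $\beta\gamma_0>1$ by cancellativity, and I set $\gamma:=\mu\gamma_0$, so that $\alpha\gamma=\mu<1$. Moreover $\mu\beta\gamma_0\ge 1$, because the opposite inequality $\mu\beta\gamma_0<1$ would give $\mu\beta\gamma_0\le\mu=\mu\cdot 1$, contradicting $\beta\gamma_0>1$ by cancellativity. Case (ii) is dual: when $\beta\gamma_0=1$ (forcing $\alpha\gamma_0<1$), scale by $\nu:=\min M_{>1}$ and argue symmetrically. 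Case (iii) requires no scaling at all: since $M_{>1}=\emptyset$, UIC forces $\beta\gamma_0=1$, and then cancellativity yields $\alpha\gamma_0<1$, so $\gamma_0$ itself witnesses $(\Sep^0_V)$.

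The main case is (iv), where the assumption is that $M_{<1}$ has no biggest element and $M_{>1}$ is nonempty with no smallest element. I will isolate the following key lemma: \emph{for every $\eta>1$ in $M$ there exists $\mu\in M_{<1}\sm\{0\}$ with $\mu\eta>1$}, together with its exact dual. To prove the lemma, use ``no smallest element of $M_{>1}$'' to pick $\eta'\in M_{>1}$ with $1<\eta'<\eta$; then UIC applied to $\eta'<\eta$ supplies $\delta$ with $\eta'\delta\le 1\le\eta\delta$, which forces $\delta\in M_{<1}\sm\{0\}$. If $\eta\delta>1$, take $\mu=\delta$; otherwise $\eta\delta=1$, and ``no biggest element of $M_{<1}$'' furnishes $\mu'\in M_{<1}$ with $\mu'>\delta$, giving $\mu'\eta>\delta\eta=1$ by cancellativity. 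The dual lemma is proved symmetrically, with the degenerate subcase $\eta'=0$ handled by the nonemptiness of $M_{>1}$ (any $\nu>1$ gives $\nu\cdot 0=0<1$).

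With these two lemmas in hand, strict UIC follows at once. For $\alpha<\beta$ with UIC-witness $\gamma_0$, cancellativity rules out $\alpha\gamma_0=\beta\gamma_0=1$. If $\alpha\gamma_0=1<\beta\gamma_0$, apply the lemma to $\eta:=\beta\gamma_0$ to obtain $\mu$ and set $\gamma:=\mu\gamma_0$; symmetrically, if $\alpha\gamma_0<\beta\gamma_0=1$, apply the dual lemma with $\eta':=\alpha\gamma_0$. The main obstacle I anticipate is pinning down exactly where each of the two ``no extremum'' hypotheses is used in the auxiliary lemma — verifying that weakening either one would actually break the argument — and keeping the trivial case $\eta'=0$ in the dual under control without tacitly invoking structure on $M$ that is not available.
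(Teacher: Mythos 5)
Your proof is correct. Parts (i)--(iii) coincide with the paper's argument: start from a UIC witness $\gamma_0$ for the pair $\alpha<\beta$, and when $\gamma_0$ alone does not deliver the required strict inequality, scale it by the extremal element of $M_{<1}$ or $M_{>1}$, using cancellativity to upgrade inequalities. For (iv) your route differs in organization. The paper symmetrizes on whether $\alpha\gamma_0<1$ or $\beta\gamma_0>1$; in the first case it picks $\alpha\gamma_0<u<v<1$ using only ``no biggest element of $M_{<1}$'', applies UIC to the pair $u<v$ to get $\eta$ with $u\eta\le 1\le v\eta$, and multiplies by $\eta$, obtaining the chain $\alpha\gamma_0\eta<u\eta\le 1\le v\eta<\eta\le\beta\gamma_0\eta$; the dual case uses only ``no smallest element of $M_{>1}$''. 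You instead isolate a reusable lemma (for $\eta>1$ find $\mu\in M_{<1}\setminus\{0\}$ with $\mu\eta>1$) and its dual, whose proofs interleave both no-extremum hypotheses in a single branch. Both arguments are sound; the paper's version keeps the two hypotheses in disjoint branches, which incidentally answers the question you raise about whether each hypothesis is separately indispensable, while yours packages the argument more modularly. One cosmetic omission: in (iv) you should also record the trivial subcase $\alpha\gamma_0<1<\beta\gamma_0$, in which $\gamma_0$ itself already witnesses strict UIC.
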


\begin{proof}
Given a pair $\al <\bt $ in $M$, there exists some $\gm \in M$
with $\al \gm \le 1\le \bt \gm ,$ since $M$ has UIC. We need to
modify $\gm $ in the various cases to get strict inequality at the
appropriate places. Since $M$ is cancellative, we know in advance
that $\al \gm <\bt \gm .$

i): We want to find some $\delta\in M$ with $\al \delta<1\le \bt
\delta.$ If $\al \gm <1$ we are done with $\delta=\gm .$ Assume
now that $\al \gm =1$ and $M_{<1}$ has a biggest element $p_0>0.$
Multiplying by $p_0$ we obtain
$$\al \gm  p_0= p_0<\bt \gm  p_0.$$
We conclude that $1\le \bt \gm  p_0,$ hence
$$\al \gm  p_0<1\le \bt \gm  p_0.$$ \pSkip

ii): Now  clear by the ``dual" argument to the just given proof of
i). \pSkip

iii): obvious. \pSkip

iv): Assume that $M_{<1}\ne\{0\}$, $M_{>1}\ne\emptyset,$ and
neither $M_{<1}$ has a biggest element nor $M_{>1}$ has a smallest
element. We have $\al ,\bt ,\gm \in M$ with $\al \gm \le 1\le \bt
\gm $ and $\al \gm <\bt \gm .$ We want to find some $\delta\in M$
with $\al \delta<1<\bt \delta.$

Either $\al \gm <1$ or $\bt \gm >1.$ By symmetry it suffices to
study the case $\al \gm <1.$ \{The setting is not entirely
symmetric, since $\al $ can be zero while $\bt $ cannot be zero,
but this does not matter.\} Since $M_{<1}$ has no biggest element,
we find $u,v\in M$ with $\al \gm <u< v<1.$ Due to UIC we have some
$\eta\in M$ with $u\eta\le 1\le v\eta.$ Again using that $M$ is
cancellative, we obtain
$$\al \gm  \eta< u\eta\le 1\le v\eta<\eta\le
\bt \gm \eta.$$ The element $\delta=\gm \eta$ does the job.
\end{proof}

\begin{schol}\label{schol2.9}
Let $v: R\to M$ be a surjective $V^0$-valuation or $V$-valuation
on a semiring~$R.$ Then $v$ has UIC, hence also $$v/C:
R\twoheadrightarrow \olM:=M/C$$ has UIC. Discarding the degenerate
case that $\olM_{<1}=\{0\},$ we read off from \thmref{thm2.8},
that~$v/C$ is again a $V^0$- or $V$-valuation. But in the cases
that $\olM_{<1}$ has a biggest element or $\olM_{>1}$ has a
smallest element, it may happen that $v/C$ has the opposite type
$(V$ resp. $V^0$) to~$v.$
\end{schol}

\section{Totally ordered supertropical predomains and ultrametric supervaluations}\label{sec:3}

We will see that $m$-valuations, which are not necessarily
valuations, can be interpreted as tangible supervaluations with
values in ``totally ordered supertropical semirings". By this we
mean the following.

\begin{defn}\label{defn3.1}
Let $U$ be a supertropical semiring. A \bfem{total ordering} of
this semiring is a total ordering $\le$ of the set $U$ which is
compatible with addition and multiplication, i.e., for all~
$x,y\in U$
\begin{equation}\label{3.1}
x\le y \ \Rightarrow \ x+z\le y+z,
\end{equation}
\begin{equation}\label{3.2}
x\le y \ \Rightarrow \ x \cdot   z \leq y \cdot  z,
\end{equation}
and moreover satisfies
\begin{equation}\label{3.3}
0\le1
\end{equation}
(hence $0\le z$ for all $z\in U).$
\end{defn}

\{N.B. More generally, this definition can be formulated for any
semiring
 $R$ with
$R\setminus\{0\}$ closed under addition.\}

We know that every supertropical semiring $U$ is $ub$ (= upper
bound), i.e., carries a partial ordering $\le$, defined by
$$x\le y \dss \Leftrightarrow \exists z\in U: x+z=y,$$
\cite[Proposition 11.9]{IKR1}. It  again obeys the rules
\eqref{3.1}--\eqref{3.3}. In the following, we call this partial
ordering the \bfem{minimal ordering} of $U.$ The reason for this
terminology is that in this ordering any inequality $x\le y$ is a
formal consequence of the rules \eqref{3.1}--\eqref{3.3}.

Any total ordering of $U$ clearly refines the minimal ordering on
$U.$ It is further evident that the restriction of the minimal
ordering of $U$ to the subsemiring $M:=eU$ is the minimal ordering
of $M,$ which is total. Since a total ordering cannot be further
refined, it follows that any total ordering of $U$ restricted to
$M$ gives the minimal ordering on the bipotent semiring~$M.$

We now write down  easy observations which tell us that the whole
structure of a totally ordered supertropical semiring $U$ can be
understood in terms of the totally ordered monoid~$(U,\cdot \; )$
and the ghost map $\nu_U,$ regarded as a map from $U$ to $U$ (and
denoted here $p$).

\begin{schol}\label{schol3.2}
Assume that $U$ is a totally ordered supertropical semiring.
\begin{enumerate}
\item[a)] $(U,\cdot \; )$ is a totally ordered monoid with absorbing
element $0.$ \item[b)] The map\footnote{The map $p$ given here is
analogous to the ``ghost map'' $\nu$ given for semirings with
ghosts in \cite{IzhakianRowen2007SuperTropical}.} $p:U\to U,$
$x\mapsto ex$ has the following properties. \{The label ``$\Gh$"
alludes to ``ghost map".\}

\begin{align*}
& (\Gh1): \quad   p\circ p=p.  \\
& (\Gh2): \quad   p^{-1}(0)=\{0\}.  \\
& (\Gh3): \quad   \forall x,y\in U: p(xy)=p(x)p(y).  \\
& (\Gh4): \quad  \forall x,y\in U: x\le y\Rightarrow p(x)\le p(y).  \\
& (\Gh5): \quad   \forall x\in U: x\le p(x).  \\
\end{align*}
 \item[c)] The
addition of the semiring $U$ is determined by the ordering of $U$
and the  map $p$ as follows: For all $x,y \in  U$
$$x+y=\begin{cases} y &\quad \text{if}\ p(x)<p(y),\\
x&\quad \text{if}\ p(y)<p(x),\\
p(x) &\quad \text{if}\ p(x)=p(y).\end{cases}$$
 \item[d)]$M:=p(U)$ is an ideal of $U.$ If $U\setminus M$ is
 closed under multiplication, then the totally ordered monoid
 $(M\setminus\{0\},\cdot \; )$ is cancellative (cf. \cite[Proposition
 3.13]{IKR1}).
\end{enumerate}
\end{schol}
The  observations just made lead us
to a construction of all ordered supertropical predomains,  i.e.,
order supertropical semirings $U$ such that $U\setminus eU$ is not
empty and closed under multiplication and $eU$ is cancellative
\cite[Definition 3.14]{IKR1}. (The case $U=eU$ could be included
but lacks interest.)

\begin{thm}\label{thm3.3}
Assume that $(U,\cdot \; )$ is a totally ordered monoid with
absorbing element $0$ and $0<1,$ and that $p:U\to U$ is a map
obeying the rule {\rm $(\Gh1)$--$(\Gh5)$} above in Scholium~
\ref{schol3.2}.b. Assume also that the submonoid
$p(U)\setminus\{0\}$ of $(U,\cdot \; )$ is cancellative. Define an
addition $U\times U\overset +\rightarrow U$ by the rule given in
Scholium \ref{schol3.2}.c. Then $U$, enriched by this addition, is
a totally ordered supertropical semiring, and $p(x)=ex$ for all
$x\in U$, with $e:=e_U:=1_U+1_U.$ If $U\ne p(U)$ and $U\setminus
p(U)$ is closed under multiplication, then $U$ is a supertropical
predomain.
\end{thm}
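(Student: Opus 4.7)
My plan is to verify, one axiom at a time, that the enriched structure $(U,+,\cdot,0,1)$ satisfies the definition of a totally ordered supertropical semiring, and to read off the identity $p(x)=ex$ as an immediate consequence of distributivity. The entire argument is organized as a sequence of case analyses on the positions of $p$-values in the totally ordered set $p(U)$; the only delicate point is one sub-case of distributivity.

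I start with the easy additive axioms. Commutativity of $+$ is symmetric in the defining rule, and the identity property follows from $(\Gh2)$: for $x\ne 0$, $p(0)=0<p(x)$, so $0+x=x$ by the first case. Associativity of $+$ is a mechanical case analysis on the ordering of $p(x),p(y),p(z)$, in each configuration both $(x+y)+z$ and $x+(y+z)$ collapsing to the same element. Compatibility of the given total order with $+$ is analogous: given $x\le y$, apply $(\Gh4)$ to get $p(x)\le p(y)$ and split cases on the position of $p(z)$, using the contrapositive observation $p(z)<p(y)\Rightarrow z<y$ where needed.

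The heart of the proof is distributivity $x(y+z)=xy+xz$. If $p(y)\ne p(z)$, say $p(y)<p(z)$, then $y+z=z$ and one must show $xy+xz=xz$; the strict inequality $p(xy)<p(xz)$ (away from the trivial case $x=0$) is supplied by cancellativity of $p(U)\setminus\{0\}$ applied to the monotone inequality $p(x)p(y)\le p(x)p(z)$. The delicate case is $p(y)=p(z)$, where both sides of distributivity reduce to the identity
\[
x\cdot p(y)\;=\;p(x)\cdot p(y).
\]
The inequality $\le$ is immediate from $(\Gh5)$ and monotonicity of $\cdot$. The reverse direction is the main obstacle: by $(\Gh3)$ and $(\Gh1)$ both sides lie in the fiber of $p$ over $p(x)p(y)$, whose unique fixed point is $p(x)p(y)$ itself, so the task is to show that $x\cdot p(y)$ is a fixed point of $p$. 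I plan to achieve this by combining associativity of $\cdot$, the order-compatibility consequence $1\le e=p(1)$ (yielding $x\le xe$ by monotonicity), and the cancellativity of $p(U)\setminus\{0\}$: any strict defect $x\cdot p(y)<p(x)\cdot p(y)$ would, after multiplication by suitable elements of $p(U)\setminus\{0\}$, contradict one of these.

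With distributivity in hand, the remaining structure emerges at once. Specializing $y=z=1$ in distributivity gives $x\cdot(1+1)=x+x$; the rule identifies $1+1=p(1)=:e$ and $x+x=p(x)$ (both by the third case), so $ex=p(x)$ for every $x\in U$, as announced. This makes $e$ idempotent under $\cdot$ (via $(\Gh3)$: $e^2=p(1)^2=p(1)=e$) and under $+$ (since $p(e)=e$), and $eU=p(U)$ is an ideal carrying the $\max$-addition from the rule, hence a bipotent semiring whose multiplicative monoid outside $0$ is cancellative by hypothesis. The defining rule of Scholium~\ref{schol3.2}.c then coincides with the standard supertropical addition rule $x+y\in\{x,y,ex\}$ dictated by comparing $ex,ey$, so all axioms of \cite[Definitions 3.5, 3.9]{IKR1} are in place. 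Finally, if $U\ne p(U)$ and $U\setminus p(U)=U\setminus eU$ is closed under multiplication, then the tangible set $\tT(U)=U\setminus eU$ is nonempty and multiplicatively closed, so $U$ is a supertropical predomain per \cite[Definition 3.14]{IKR1}.
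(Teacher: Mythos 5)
Your overall plan—verify all semiring axioms directly and then read off $p(x)=ex$—differs from the paper's route, which invokes \cite[Theorem 3.1]{IKR2} as a black box to obtain the supertropical semiring structure with $p=\nu_U$, and then only verifies by case analysis that the given total order is compatible with the new addition (your handling of that last point tracks the paper's). This self-contained approach is a reasonable thing to attempt, but it runs into a genuine gap at exactly the step you flag as ``delicate.''

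The problem is your proposed proof that $x\cdot p(y)=p(x)\cdot p(y)$. You correctly reduce this to showing $x\cdot p(y)$ is a fixed point of $p$, i.e.\ $x\cdot p(y)\in p(U)$ (equivalently, that $p(U)$ is an ideal of $(U,\cdot)$). But the sketch you give --- that a strict defect $x\cdot p(y)<p(x)\cdot p(y)$ leads to a contradiction ``after multiplication by suitable elements of $p(U)\setminus\{0\}$'' --- does not work: cancellativity of $p(U)\setminus\{0\}$ is only usable when the factors being compared lie in $p(U)\setminus\{0\}$, and the whole point is that you do \emph{not} yet know $x\cdot p(y)\in p(U)$. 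Multiplying the strict inequality by elements of $p(U)\setminus\{0\}$ produces no contradiction. In fact, the identity cannot be derived from (Gh1)--(Gh5) together with order-compatibility of $\cdot$ and cancellativity of $p(U)\setminus\{0\}$: take $U=\{0<a<1<b<g\}$ with multiplication $a^2=ab=ag=a$, $b^2=bg=g^2=g$, and $p(x)=g$ for $x\ne 0$, $p(0)=0$. This is a totally ordered monoid, $p$ satisfies (Gh1)--(Gh5), and $p(U)\setminus\{0\}=\{g\}$ is trivially cancellative, yet $a\cdot p(1)=ag=a\ne g=p(a)$, so $p(U)=\{0,g\}$ is not an ideal and the putative addition fails distributivity ($a(1+1)=ag=a$ but $a\cdot 1+a\cdot 1=g$). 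The fact you need is exactly what \cite[Theorem 3.1]{IKR2} supplies (where $p(U)$ being a multiplicative retract/ideal is part of the hypotheses); it is not a formal consequence of (Gh1)--(Gh5) alone, and your argument for it does not close the gap.

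Two smaller points. First, in the case $p(y)<p(z)$ of distributivity, when applying cancellativity to conclude $p(x)p(y)<p(x)p(z)$ you should first dispose of the possibility $p(x)p(y)=p(x)p(z)=0$ (in which case $xy=xz=0$ by (Gh2) and there is nothing to prove). Second, the step ``$1+1=p(1)$'' uses the third clause of the addition rule, which is fine, but note that it presupposes the addition is already well-defined as stated; that is unproblematic, but the subsequent identification $e:=1+1=p(1)$ is being used before distributivity is fully established, so the order of dependencies in your write-up needs care.
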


\begin{proof} By \cite[Theorem 3.1]{IKR2} we have a unique structure of a
supertropical semiring on the set $U$ with multiplication as
given, such that $p=\nu _U,$ hence $p(U)=eU,$ the addition being
defined by the rule in Scholium \ref{schol3.2}.c. It is also clear
from the assumptions that $U$ is a supertropical predomain, if
$U\ne p(U)$ and $U\setminus p(U)$ is closed under multiplication.
Also the requirements \eqref{3.2} and \eqref{3.3} are covered by
the assumptions. Notice that up to now $(\Gh4)$ and $(\Gh5)$ are
not yet needed.

It remains to verify \eqref{3.1}. Thus, given $x,y,z\in U$ with
$x\le y,$ we have to prove that $x+z\le y+z.$ By $(\Gh4)$ we know
that $p(x)\le p(y).$ We distinguish the cases $p(x)<p(y)$ and
$p(x)=p(y)$ and go through various subcases.

\textit{Case 1}. $p(x)<p(y).$

\begin{enumerate}\item[a)] If $p(z)<p(x),$ then $x+z=x,$ $y+z=y.$

\item[b)] If $p(z)=p(x)$, then $x+z=p(x),$ $y+z=y.$ Suppose
$p(x)>y.$ Applying $p,$ we obtain $p(x)\ge p(y),$ a contradiction.
Thus $p(x)\le y.$ (In fact $p(x)<y.)$

\item[c)] If $p(x)<p(z)<p(y)$, then $x+z=z,$ $y+z=y,$ and $z<y$,
since $z\ge y$ would imply $p(z)\ge p(y).$

\item[d)] If $p(z)=p(y)$, then $x+z=z,$ $y+z=p(z),$ and $z\le
p(z)$ by $(\Gh5)$.

\item[e)] If $p(y)<p(z),$ then $x+z=z=y+z.$
\end{enumerate}
Thus in all subcases $x+z\le y+z.$

\textit{Case 2}. $p(x)=p(y).$

\begin{enumerate}\item[a)] If $p(z)<p(x),$ then $x+z=x,$ $y+z=y.$

\item[b)] If $p(z)=p(x)$, then $x+z=p(z)=y+z.$

\item[c)] If $p(z)>p(x) $, then $x+z= z=y+z.$
\end{enumerate}
Thus again in all subcases $x+z\le y+z.$
\end{proof}

We will need two more observations about totally ordered
supertropical semirings.

\begin{lem}\label{lem3.4}
Let $U$ be a totally ordered supertropical semiring.
\begin{enumerate}
\item[i)] If $x\in \cT(U),$ $y\in \cG(U),$ then
$$x\le y \Leftrightarrow ex\le y.$$
\item[ii)] If $x\in \cG (U),$ $y\in \cT(U),$ then
$$x\le y\Leftrightarrow x<ey.$$\end{enumerate}\end{lem}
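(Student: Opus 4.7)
The whole lemma rests on three structural facts that have already been established for a totally ordered supertropical semiring $U$: the ghost map $p(x) := ex$ is order preserving ($\Gh4$), one has $x \leq p(x)$ for every $x \in U$ ($\Gh5$), and an element $z$ lies in $M = eU$ if and only if $ez = z$. In particular, if $y \in \tT(U)$ is nonzero, then $y \notin M$, so $y \neq ey$, and combined with $(\Gh5)$ this gives the strict inequality $y < ey$; dually, any $x \in \tG(U)$ satisfies $ex = x$.

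For part (i), with $x \in \tT(U)$ and $y \in \tG(U)$, the direction ``$\Leftarrow$'' is immediate from $(\Gh5)$: from $ex \leq y$ and $x \leq ex$ one concludes $x \leq y$. For ``$\Rightarrow$'', I apply the order-preserving map $p$ to the inequality $x \leq y$, getting $ex \leq ey$, and then use $ey = y$ (since $y$ is ghost) to obtain $ex \leq y$.

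For part (ii), with $x \in \tG(U)$ and $y \in \tT(U)$, the direction ``$\Rightarrow$'' uses the observation $y < ey$ (for $y$ a nonzero tangible): if $x \leq y$, then $x \leq y < ey$, so $x < ey$. For ``$\Leftarrow$'', I argue by contradiction. Suppose $x < ey$ but $x > y$. Applying $(\Gh4)$ to $x > y$ yields $ex \geq ey$, and because $x$ is ghost we have $ex = x$, so $x \geq ey$, contradicting $x < ey$.

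No step poses a real obstacle; the only subtlety is making sure in part (ii)$(\Rightarrow)$ that the inequality $y < ey$ is strict, which is precisely where the tangibility (i.e.\ $y \notin M$) of $y$ is used, as opposed to the ghost hypothesis on $y$ in part (i), where equality $ey = y$ is what makes things work.
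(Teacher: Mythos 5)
Your proof is correct and follows essentially the same route as the paper: both directions of (i) are proved by applying $(\Gh4)$ and $(\Gh5)$ together with the fact that $ey = y$ for ghost $y$, and (ii) is proved using $y < ey$ for nonzero tangible $y$ in one direction and a contradiction argument via $(\Gh4)$ and $ex = x$ in the other — exactly matching the paper's proof. The only cosmetic difference is that the paper notes the conclusion in (i)$(\Leftarrow)$ and (ii)$(\Leftarrow)$ is in fact strict ($x<y$), which you do not remark on but also do not need.
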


\begin{proof}
i) $(\Rightarrow)$: $x\le y$ implies $ex\le ey=y.$

$(\Leftarrow)$: If $ex\le y$ then $x\le y$ (even $x<y),$ because
$x<ex.$ \pSkip

ii) $(\Rightarrow)$: Clear, because $y<ey.$

$(\Leftarrow)$: Let $x<ey.$ Suppose $x>y.$ Then $x=ex\ge ey,$ a
contradiction. Thus $x\le y$ (even $x<y).$
\end{proof}

We are ready to obtain an analogue of the construction of the
supertropical semirings STR $(\cT,\cG ,v)$ in \cite[Construction
3.16]{IKR1} for totally ordered supertropical semirings.

Assume that totally ordered monoids $(  \cT,\cdot \; )$, $(\cG
,\cdot \; )$ and a monoid homomorphism $v: \cT\to \cG $ are given.
Assume further that $(\cG ,\cdot \; )$ is cancellative and $v$ is
order preserving, i.e.,
$$\forall x,y\in \cT:x\le_\cT y\ds\Rightarrow v(x)\le_{\cG }v(y).$$ (Here $\le_\cT$ denotes the ordering of $\cT$ and
$\le_{\cG }$ denotes the ordering of $\cG .)$

By \cite[Construction 3.16]{IKR1} we have the supertropical
semiring
$$U:=
\STR(\cT,\cG ,v)$$ at our disposal. As a set, $U$ is the disjoint
union of $\cT,\cG $ and $\{0\}$, and $\cT$ is the set of tangible
elements of $U,$ whereas $\cG $ is the set of ghost elements of
$U.$ The multiplication on $U$ extends the multiplications on
$\cT$ and on $\cG ,$ has 0 as absorbing element, and obeys the
rule $x\cdot y=v(x)y$ for $x\in \cT,$ $y\in\cG .$ Moreover,
$e=v(1)$ and (hence) $ey=v(y)$ for all $y\in \cT.$\footnote{The
construction of $\STR(\cT,\cG ,v)$ can be understood as  a special
case of \cite[Theorem 3.1]{IKR2}.}

We want to define a total ordering on the set $U$ which extends
the given total orderings $\le_\cT$, $\le_{\cG }$ on $\cT$ and
$\cG $, and turns $U$ into a totally ordered supertropical
semiring. By \lemref{lem3.4} we are forced to define the relation
$\le$ on $U$ as follows. \{The label ``EO" alludes to ``extended
ordering".\}
\begin{align*}
& (\EO1): \quad   \forall x\in U:\quad  0\le x.  \\
& (\EO2): \quad  \text{If}\  x,y\in \cT:\quad x\le
y\Leftrightarrow
x\le_\cT y. \\
& (\EO3): \quad   \text{If}\ x,y\in \cG:\quad  x\le
y\Leftrightarrow x\le_{\cG }
y.  \\
& (\EO4): \quad  \text{If}\ x \in \cT,\ y\in\cG :\quad x\le
y\Leftrightarrow
v(x)\le_{\cG } y.  \\
& (\EO5): \quad   \text{If}\ x\in \cG ,\ y\in\cT :\quad x\le
y\Leftrightarrow x<_{\cG } v(y).  \\
\end{align*}

\begin{thm}\label{thm3.5}
The relation $\le$, defined by the rules {\rm$(\EO1)$--$(\EO5)$}
on $U:=\STR(\cT,\cG ,v),$ is a total ordering of this
supertropical semiring.
\end{thm}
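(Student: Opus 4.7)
My plan is to deduce Theorem \ref{thm3.5} from Theorem \ref{thm3.3}. To apply that theorem I need three ingredients: (i) the relation $\le$ defined by (EO1)--(EO5) is a total ordering of the set $U$; (ii) $(U,\cdot,\le)$ is a totally ordered monoid with $0 \le 1$; and (iii) the map $p : U \to U$, $x \mapsto ex$, satisfies rules (Gh1)--(Gh5). Since in $\STR(\cT,\cG,v)$ one has $ex = v(x)$ for $x\in\cT$ and $ex = x$ for $x\in\cG$, the given addition on $U$ is exactly the one produced from $\le$ and $p$ by the recipe of Scholium \ref{schol3.2}(c); Theorem \ref{thm3.3} will then supply the remaining compatibility \eqref{3.1} of $\le$ with addition for free.

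For $\le$ to be a total ordering, the only cases not immediate from (EO1)--(EO3) involve one tangible and one ghost element. For totality, given $x\in\cT$ and $y\in\cG$, trichotomy in $\cG$ between $v(x)$ and $y$ produces exactly one of $x\le y$ via (EO4) or $y\le x$ via (EO5). Antisymmetry in the mixed case would force $v(x)\le_\cG y$ and $y <_\cG v(x)$ simultaneously, which is absurd. Transitivity I will handle by cases on the types of the three elements; the recurring tool is that $v:\cT\to\cG$ is order-preserving, which lets me convert a chain passing through $v$ back to an inequality in $\cT$ by contrapositive (for example, from $v(x) \le y < v(z)$ with $y\in\cG$ one deduces $v(x) < v(z)$ and then $x \le_\cT z$).

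Compatibility with multiplication splits the same way. The equal-type cases reduce to the fact that $\cT$ and $\cG$ are ordered monoids. In mixed cases I rewrite products using $xz = v(x)z$ for $x\in\cT$, $z\in\cG$, reduce the desired inequality to one in $\cG$ through (EO4)/(EO5), and cancel the nonzero factor $v(z)$ using cancellativity of $\cG$; (EO1) delivers $0 \le 1$. For the ghost map: (Gh1)--(Gh3) are immediate from the $\STR$ construction together with $v$ being a monoid homomorphism; (Gh4) is precisely the order-preservation content of (EO2)--(EO5), once one notes that $p$ is $v$ on $\cT$ and the identity on $\cG$; and (Gh5), namely $x \le p(x) = v(x)$ for $x\in\cT$, becomes via (EO4) the tautology $v(x)\le v(x)$.

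The only real obstacle is the case bookkeeping, which fans out into a handful of subcases for antisymmetry, transitivity, and multiplicative compatibility. Each of these should resolve cleanly via (EO4)/(EO5) together with the order-preservation of $v$ and the cancellativity of $\cG$; Theorem \ref{thm3.3} then closes the argument.
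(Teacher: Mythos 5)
Your proposal is correct and follows essentially the same route as the paper: reduce to Theorem~\ref{thm3.3} by showing $\le$ is a total order compatible with multiplication and that $p(x)=ex$ satisfies $(\Gh1)$--$(\Gh5)$, then let Theorem~\ref{thm3.3} supply compatibility with addition. The paper carries out exactly the case bookkeeping you describe, including the key uses of (3.4) (your contrapositive trick for transitivity) and cancellativity of $\cG$ in the mixed multiplicative case.
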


\begin{proof}
We intend to apply \thmref{thm3.3}. For that reason, we regard $U$
as a multiplicative monoid, equipped with the map $$p:U\to U,
\qquad  x\mapsto ex.$$ Thus $p(x)=x$ if $x\in \cG \cup\{0\},$ and
$p(x)=v(x)$ if $x\in \cT.$ Clearly, $p$ has the properties
 $(\Gh1)$--$(\Gh3)$ listed in Scholium \ref{schol3.2}.b, and the addition
 of the semiring $U$ is given by the rule listed in Scholium
 \ref{schol3.2}.c, cf. \cite[Construction 3.16]{IKR1}.

 Our main task now is to verify that the binary relation $\le$ on
 $U$ defined by $(\EO1)$--$(\EO5)$ is a total ordering of the set $U$ and
 obeys the rules $(\Gh4)$ and $(\Gh5)$.

 We read off from the list $(\EO1)$--$(\EO5)$ that $x\le x$ for every
 $x\in U.$ Further
 \begin{align*}&\forall x,y\in U: \ x\le y,\ y\le x\Rightarrow
 x=y,\\
 &\forall x,y\in U: \ x\le y\  \text{or} \ y\le x.
 \end{align*}
 Finally, we obtain $(\Gh4)$ and $(\Gh5)$, i.e.,
 \begin{align*}
& \quad  x\le y\Rightarrow p(x)\le p(y),  \\
& \quad   x\le p(x).
\end{align*}
%
If $a,b\in U$ then we mean by $a<b$ that $a\le b$ and $a\ne b.$
\{N.B. It is not completely ridiculous to state this convention,
since we do not yet know that $\le$ is an ordering.\}

From $(\Gh4)$ we conclude that
\begin{equation}\label{3.4}
p(y)<p(x)\Rightarrow y<x.\end{equation} Indeed, if $y<x$ does not
hold then $x\le y,$ hence $p(x)\le p(y)$ contradicting
$p(y)<p(x).$

We are now ready to prove the transitivity of our relation. Let
$x,y,z\in U$ be given with $x\le y$ and $y\le z.$ We have to
verify that $x\le z.$ By $(\Gh4)$ above we have $p(x)\le p(y)$ and
$p(y)\le p(z),$ hence $p(x)\le_{\cG }p(y)$ and
$p(y)\le_{\cG}p(z).$ This implies $p(x)\le_{\cG } p(z),$ hence
$p(x)\le p(z).$ \pSkip

\textit{Case 1}. $p(x)<p(z).$ We conclude by \eqref{3.4} that $x<
z.$  \pSkip

\textit{Case 2}. $p(x)=p(z).$ We conclude from $p(x)\le_{\cG }
p(y)\le_{\cG } p(z),$ that also $p(x)=p(y). $ If ~$p(z)=0$ then
$x=0=z.$ Henceforth we assume that $p(z)\ne0.$ Now $x,y,z\in\cG
\cup \cT.$ We proceed through several subcases.

\begin{enumerate}
    \item[a)] If $x,y,z\in \cT$, then it is clear from the transitivity of the
relation $\le_{\cT } $ that $x\le z.$

\item[b)] If $x\in\cG $ then the relations $x\le y$ and
$p(x)=p(y)$ force $x=y$ by the rules $(\EO5)$ and $(\EO3)$. We
conclude from $y\le z$ that $x\le z.$

\item[c)] Similarly, if $y\in\cG $ we obtain $y=z$ and then
$x\le z.$

\item[d)] There remains the case that $x,y\in \cT$ and $z\in\cG .$ Since $p(x)=p(z)$, we learn from
$(\EO4)$ that $x\le z$ again.
\end{enumerate}

Thus $x\le z$ in all subcases.

We now know that our relation $\le$ is a total ordering on the set
$U$ obeying the rules $(\Gh4)$ and $(\Gh5)$. It extends the given
orderings on $\cG $ and $\cT.$

We check the compatibility of this ordering with multiplication.
Let $x,y,z\in U$ be given with $x\le y.$ We have to verify that
$x\cdot z \le y\cdot z.$

If $x=0$ or $z=0$ this is obvious. Thus we may assume that all
three elements $x,y,z$ are in $\cT\cup\cG .$ By $(\Gh4)$ we have
$p(x)\le p(y).$ Since our ordering restricted to $\cG $ is known
to be compatible  with multiplication, we conclude, using
$(\Gh1)$, that
$$p(xz)=p(x)p(z)\le p(y)p(z)=p(yz).$$
If $z\in\cG ,$ or both $x,y$ are in $\cG ,$ then $xz,yz\in\cG $
and thus
$$xz=p(xz)\le p(yz)=yz.$$
If all three elements are in $\cT,$ then $xz\le yz$, since the
restriction of our ordering to $\cT$ is known to be compatible
with multiplication.

We are left with the cases $x\in \cT,$ $y\in\cG $, $z\in \cT$ and
$x\in\cG ,$ $y\in \cT,$ $z\in \cT.$ In the first case, we have
$xz\in \cT, $ $yz\in\cG ,$ and we see by the rule $(\EO4)$ that
$xz\le yz.$ In the second case, $xz\in\cG ,$ $yz\in \cT,$ and we
see by $(\EO5)$ that $xz\le yz,$ using in an essential way that
the monoid $\cG $ is cancellative.

We now know that all assumptions made in \thmref{thm3.3} for the
totally ordered monoid $(U,\cdot \; )$ and the map $p:U\to U$ are
valid in the present case. Thus by this theorem our ordered monoid
$U$, together with the addition described in Scholium
\ref{schol3.2}.c, is a totally ordered supertropical semiring. But
this addition is the original one on the semiring $U=\STR(\cT,\cG
,v),$ cf. \cite[Constuction 3.16]{IKR1}. Our proof of
\thmref{thm3.5} is complete.
\end{proof}

\begin{defn}\label{defn3.6}
Given a triple $(\cT,\cG ,v)$ consisting of totally ordered
monoids $\cT,\cG $, with~ $\cG $ cancellative and an order
preserving monoid homomorphism from $\cT$ to $\cG ,$ we call the
supertropical predomain $U=\STR(\cT,\cG ,v)$ together with the
total ordering on $U$ by the rules $(\EO1)$--$(\EO5)$ the
\bfem{ordered supertropical semiring associated to} $(\cT,\cG
,v)$, and we denote this ordered supertropical predomain by
$\OSTR(\cT,\cG ,v).$\end{defn}

We want to interpret $m$-valuations as a special kind  of
supervaluations with values in such semirings $\OSTR(\cT,\cG ,v).$
For that reason we need some more terminology.

\begin{defn}\label{defn3.7}
Let $R$ be a semiring.
\begin{enumerate}
\item[a)] A \bfem{value-ordered supervaluation} on $R$, or
\bfem{vo-supervaluation} for short, is a supervaluation $\varphi:
R\to U$ with $U$ a totally ordered supertropical semiring.
\item[b)] We call a vo-supervaluation $\varphi: R\to U$
\bfem{ultrametric}, if $$ \forall a,b\in R:\
\varphi(a+b)\le\max(\varphi(a),\varphi(b)).$$ \item[c)] Let
$\varphi: R\to U$ and $\psi: R\to V$ be vo-supervaluations, and
let $U'$, $V'$ denote the subsemirings of $U$ and $V$ generated by
$\varphi(R)$ and $\psi(R)$ respectively, i.e., $U'=\varphi(R)\cup
e\varphi(R),$ $V'=\psi(R)\cup e\psi(R)$ (cf. \cite[Proposition
4.2]{IKR1}). We call $\varphi$ and $\psi$ \bfem{order-equivalent},
and write $\varphi \simeq_0\psi,$ if there exists an
order-preserving isomorphism $\al : U'\osr V'$ with $\psi(a)=\al
(\vrp(a))$ for every $a\in R.$
\end{enumerate}
\end{defn}

\begin{remarks}\label{remarks3.8}$ $
\begin{enumerate}
\item[a)] Given a vo-supervaluation $\varphi: R\to U$, let $v:
R\to eU$ denote the $m$-valuation\footnote{In the examples below
$v$ will be a valuation.} covered by the supervaluation $\varphi$.
Then, for every $a\in R,$
$$\varphi(a)\le e\varphi(a)=v(a).$$
The \bfem{support} of $\varphi$, defined as
$$\supp(\varphi):=\{a\in R\ds|\varphi(a)=0\},$$
coincides with the support of $v.$ \item[b)] If $\varphi$ is
ultrametric then
$$A_\varphi:=\{a\in R \ds |\varphi(a)\le 1\}$$
is a CMC-subsemiring of $R$ contained in $A_v,$ and
$$\mfp_\varphi:=\{a\in R\ds |\varphi(a)<1\}$$
is a prime of $R$ contained in $\mfp_v.$
\end{enumerate}
\end{remarks}

\begin{construction}\label{constr3.9}
Let $w:R\to N$ be an $m$-valuation and $\rho: N\to M$ a semiring
homomorphism from $N$ to a \bfem{cancellative} bipotent semiring
$M.$ Assume that $\rho^{-1}(0)=\{0\}.$ Thus $v:= \rho \circ w$ is
a valuation coarsening $w,$ and $v,$ $w$ have the same support
$v^{-1}(0)=w^{-1}(0).$ Moreover, $M=\cG  \cup\{0\}$ and $N
=\cT\cup\{0\}$ with $\cG $ a totally ordered cancellative
(multiplicative) monoid and also $\cT:=\rho^{-1}(\cG )$ a totally
ordered monoid.

Abusing notation, we denote the monoid homomorphism $\cT\to \cG $
obtained from $\rho$ by restriction again by $\rho.$ It is order
preserving. Then in  the totally ordered supertropical predomain
$$U:=\OSTR(\cT,\cG ,\rho)$$
we identify $\cT=\cT(U),$ $\cG =\cG (U)$ and then
$N=\cT\cup\{0\}$, $M=\cG \cup\{0\}$ in the obvious way. Now $M=eU$
and $N = U\setminus\cG $. The map
$$\varphi: R\to U,\quad a\mapsto w(a)\in N\subset U,$$ sends $0$ to
$0$, $1$ to $1$, and is multiplicative. Further $e\varphi(a)=v(a)$
for all $a\in R.$ Thus $\varphi$ is a supervaluation covering the
valuation $v.$ It has values in $N=\cT(U)\cup\{0\}$, and
$$\varphi(a+b)\le\max(\varphi(a),\varphi(b))$$
for $a,b\in R$, since the ordering of $U$ extends the ordering on
$N$ and $w$ is an $m$-valuation. We conclude that $\varphi: R\to
U$ is a tangible ultrametric supervaluation.
\end{construction}

Conversely, given a tangible ultrametric supervaluation $$\varphi:
R\to \OSTR(\cT,\cG ,\rho)=U,$$ we may view $\varphi$ as a map $w$
from $R$ to $N=\cT(U)\cup\{0\}$, and this is an $m$-valuation.
Moreover, $e\varphi=v.$ Extending $\rho: \cT\to \cG $ to a
semiring homomorphism $\rho: N\to M$ by $\rho(0)=0$, we have
$\rho\circ w=v.$

Now the following is fairly obvious.

\begin{thm}\label{thm3.10}
Given a valuation $v: R\to M,$ the $m$-valuations $w:R\to N$
dominating $v$ (cf. \cite[\S2]{IKR1}) and having the same support
as $v$ correspond with the tangible ultrametric supervaluations
$\varphi: R\to U$ covering $v$ (i.e., $eU=M,$ $ e\varphi=v$)
uniquely up to order equivalence in the way indicated by
Construction \ref{constr3.9}.
\end{thm}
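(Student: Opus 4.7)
The plan is to verify that Construction~\ref{constr3.9} and the converse passage sketched immediately after it are mutually inverse, up to order equivalence on the supervaluation side. Throughout I would work with surjective representatives: both $v: R \twoheadrightarrow M$ and the $w: R \twoheadrightarrow N$ under consideration, and on the $\varphi$ side the reduction $U' := \varphi(R) \cup e\varphi(R)$.

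First, I would make the forward map precise. Given a surjective $w: R \twoheadrightarrow N$ dominating $v$ with $\supp(w) = \supp(v)$, dominance provides a unique surjective semiring homomorphism $\rho: N \twoheadrightarrow M$ with $\rho \circ w = v$; and from $w^{-1}(0) = v^{-1}(0) = w^{-1}(\rho^{-1}(0))$ together with surjectivity of $w$, one reads off $\rho^{-1}(0) = \{0\}$. So Construction~\ref{constr3.9} applies verbatim and yields a tangible ultrametric supervaluation $\varphi: R \to U := \OSTR(\cT, \cG, \rho)$, with $\cT = N \setminus \{0\}$ and $\cG = M \setminus \{0\}$, covering $v$.

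Second, I would make the backward map precise. Starting from a tangible ultrametric supervaluation $\varphi: R \to U$ covering $v$, I set $N := \cT(U) \cup \{0\}$. The total order of $U$ restricted to $N$ is total, and $N \setminus \{0\}$ is a multiplicative submonoid of $U$ (because $U$ is a supertropical predomain, so $\cT(U)$ is closed under multiplication). Equipping $N$ with $x + y := \max(x,y)$ turns it into a bipotent semiring. The map $w := \varphi$, viewed as $R \to N$, is multiplicative, sends $0$ to $0$ and $1$ to $1$, and the ultrametric condition on $\varphi$ gives $w(a+b) \leq \max(w(a), w(b))$, so $w$ is an $m$-valuation. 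Defining $\rho: N \to M$ by $\rho(0) = 0$ and $\rho(t) := et$ for $t \in \cT(U)$, I would check that $\rho$ is a semiring homomorphism (using $e^2 = e$, and that the restriction of the order from $U$ to $M$ is the bipotent order of $M$) satisfying $\rho \circ w = e\varphi = v$, while $\supp(w) = \supp(\varphi) = \supp(v)$ by Remark~\ref{remarks3.8}.a.

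Finally, I would verify the two passages are mutually inverse up to order equivalence. The forward-then-backward passage recovers $w$ on the nose, since by construction $\cT(\OSTR(\cT, \cG, \rho)) \cup \{0\} = N$ and the restriction of the new ordering to $N$ is the original one. The other composition is the delicate point: starting from $\varphi: R \to U'$, the backward construction extracts $(\cT(U'), \cG(U'), \rho)$, and then the forward construction gives $\tilde U := \OSTR(\cT(U'), \cG(U'), \rho)$ with $\tilde \varphi(a) = w(a) = \varphi(a)$. The identity map on the underlying sets is a semiring isomorphism $U' \to \tilde U$ by \cite[Construction~3.16]{IKR1}. The hard part is to see it is order-preserving; this will fall out of Lemma~\ref{lem3.4}, which shows that in any totally ordered supertropical semiring the comparisons between tangibles and ghosts are forced, hence must coincide with the rules $(\EO1)$--$(\EO5)$ used to define the order on $\tilde U$. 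So the identity yields an order-preserving isomorphism and witnesses $\varphi \simeq_0 \tilde \varphi$, completing the proof.
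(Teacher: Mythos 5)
Your proposal is correct and fills in precisely what the paper declares ``fairly obvious'': the paper offers no written proof beyond pointing to Construction~\ref{constr3.9} and the converse paragraph immediately preceding Theorem~\ref{thm3.10}, and your argument spells out the mutually inverse passages and uses Lemma~\ref{lem3.4} to force the ordering to agree with $(\EO1)$--$(\EO5)$, exactly the intended route.

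One small gloss to watch: in the backward direction you need $\cT(U')\cup\{0\}$ (not $\cT(U)\cup\{0\}$) closed under multiplication, which follows because $\varphi(R)\setminus\{0\}$ is multiplicatively closed --- this uses that $\supp(\varphi)=\supp(v)$ is a prime ideal, which holds since $M$ is cancellative. You flag the reduction to $U'$ up front, so the argument goes through; just be explicit that it is $U'$, not the ambient $U$, that must be a predomain.
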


\begin{examples}\label{examps3.11} $ $

\begin{enumerate} \eroman
    \item  Let $R$ be a semiring and $\mfp$ a prime of $R.$ In \S\ref{sec:1} we
defined the associated $V^0$-valuation
$$w:=v_{\mfp}:R\to N:=M(R,\mfp),$$
and in \S\ref{sec:2} we established the semiring homomorphism
$$\rho: = \pi_C:N\to M:=N/C.$$
We learned that $\rho^{-1}(0)=\{0\}.$ Applying Construction
\ref{constr3.9} to these  data, we obtain a tangible ultrametric
supervaluation
$$\varphi:=\varphi_{\mfp}: R\to U(R,\mfp)$$
which is determined by the pair $(R,\mfp)$ above and covers the
valuation
$$v:=w/C: R\to M.$$
Here $U(R,\mfp)$ denotes the totally ordered supertropical
semiring $\OSTR(\cT,\cG ,\rho)$ from Construction \ref{constr3.9}.
We  have
$$\mfp_\varphi=\mfp,\qquad A_\varphi=[\mfp:\mfp].$$ \{Recall the notations in Remark
\ref{remarks3.8}.\} \pSkip

\item  Similarly, given a proper CMC-subsemiring $A$ of $R,$ we
obtain from the associated $V$-valuation
$$w:=v_A: R\to N:=M(R,A)$$
a tangible ultrametric supervaluation
$$\varphi: =\varphi_A:R\to U(R,A)$$
covering $w/C,$ with $A_\varphi=A$ and
$$\mfp_\varphi = P(A):= \{x\in R \ds|\exists y\in R\setminus A:
xy\in A\}.$$

\end{enumerate}

\end{examples}

\begin{remark}\label{rem3.12}
Assume that $\varphi: R\to U$ is a tangible ultrametric
supervaluation covering a valuation $v: R\to M.$ Now choose an
MFCE-relation $E$ on $U$ which is also compatible with the
ordering on $U$ (cf. \cite[\S4]{IKR2}). Then $U/E$ is again a
totally ordered supertropical  semiring and
$$\varphi/E :=\pi_E\circ\varphi: R\to U/E$$
is again an ultrametric supervaluation covering $v$ (cf.
\S\ref{sec:5} below for more details). But often~ $\varphi/E$ will
not be tangible. Then $\psi: =\varphi/E$ cannot be interpreted as
just an $m$-valuation covering $v.$

 Of course, $\psi(R)$ is a
multiplicative monoid with absorbing element 0, and $\psi(R)$ is
totally ordered by the ordering of $U.$ Thus the map $\psi: R\to
\psi(R)$ may be viewed as an $m$-valuation, but doing so we loose
information about the supervaluation $\psi.$\end{remark}

We hasten to exhibit the ``simplest" tangible ultrametric
supervaluations.

\begin{example}\label{examps3.13} Let $v: R\to \cG \cup\{0\}=M$ be a surjective valuation. Take $w=v$ in
Construction \ref{constr3.9}. We obtain a tangible ultrametric
supervaluation
$$\varphi: R\to U:=\OSTR(\cG,\cG,\id_{\cG}).$$ The supertropical domain
$$D(\cG):=\STR(\cG,\cG,\id_{\cG})$$
has been described in \cite[\S3]{IKR1}. The minimal ordering of
$D(\cG )$ is a total ordering. Thus we can identify $D(\cG )=U.$

The vo-supervaluation $\varphi$ coincides with the supervaluation
$\chv: R\to D(\cG )$ in \cite[Example 9.16]{IKR1}. It is the
minimal tangible supervaluation covering $v.$\end{example}

More generally, it is fairly obvious that the minimal ordering of
a supertropical predomain~ $U$ is total iff every fiber of the
ghost map $\nu_U$ contains at most one tangible element.

\section{Supervaluations from generalized CMC-sets}\label{sec:4}

In this section $R$ is a semiring.

\begin{defn}\label{defn4.1} $ $
\begin{enumerate}\item[a)]  A \bfem{CMC-subset of} $R$ is a set
$A\subset R$ such that $0\in A,$ $1\in A,$ both $A$ and
$R\setminus A$ are closed under multiplication, and there exists a
unit $u$ of $R$ with
$$u(A+A)\subset A.$$
\item[b)] We call any such unit $u$ an \bfem{exponent of} $A.$
\item[c)] The CMC-subsemirings of $R$ are the CMC-subsets that
have exponent 1. If $A$ does not admit exponent 1, we call $A$ a
\bfem{true CMC-subset} of $R.$ This means that $A$ is not a
subsemiring of $R.$ In particular, then $A\ne
R.$\end{enumerate}\end{defn}

Essentially this is the terminology of Valente and Vitulli in
their paper \cite{VV}, which in turn is rooted in the terminology
of Harrison and Vitulli in \cite{HV1}, \cite{HV2}. But we slightly
deviate from \cite{VV}. Valente and Vitulli call our CMC-subsets
``weak CMC-subsets" and our exponents ``weak exponents". They
define CMC-subsets (without ``weak") by including
 still one additional property of an archimedean flavor,
following the route developed by Harrison and Vitulli in their
quest for ``infinite primes", which are generalizations of the
classical archimedean primes in number fields.

For our purposes here, to find interesting new examples of
supervaluations, it will be amply clear that CMC-subsets as
defined above should be the basic structure. \{Consequently, in a
planned extension of this paper  we will call the CMC-subsets and
exponents of \cite{VV} ``strong CMC-subsets" and ``strong
exponents".\}

Valente and Vitulli deal only with CMC-subsets in rings. They
speak of ``nonring CMC-subsets" instead of our ``true
CMC-subsets". The analogous terms ``nonsemiring CMC-subsets" would
be simply too long.

In the papers \cite{HV2}, \cite{VV}, an exponent is most often
denoted by the letter ``e". We have to deviate also from this
habit due to our permanent use of ``e" for the ghost unit element
of a supertropical semiring.

\begin{examples}\label{examps4.2}
Let $R$ be a totally ordered field.
\begin{enumerate}
    \item[i)] The closed unit interval
$$[-1,1]_R:=\{x\in R \ds|-1\le x\le 1\}$$
is a true CMC-subset of $R$ with exponent $\frac{1}{2}.$

\item[ii)] The closed unit disk
$$\{x+iy \ds |x^2+y^2\le 1\}$$
of the field $R(i),$ $i:=\sqrt{-1},$ is a true CMC-subset of
$R(i),$ again with exponent $\frac{1}{2}.$
\end{enumerate}
\end{examples}

\begin{example}\label{examps4.3}
Let $U$ be a totally ordered supertropical semiring which is not
ghost, i.e., $U\ne eU.$ Then
$$A:=A_U:=\{x\in U \ds|x\le 1\}$$
is closed under multiplication and $0,1\in A.$ Also $R\setminus A$
is closed under multiplication. Assume that $U$ has a unit $u$
(necessarily tangible) with $eu<1,$ a rather mild condition. Then
$u(A+A)\subset A .$ Indeed, for $x,y\in A$ we have $eux<1,$
$euy<1,$ hence
$$u(x+y) \leq  eu(x+y)=\max(eux;euy)<1.$$
But $1+1 \notin  A.$ Thus $A$ is a true CMC-subset of $U.$

If such a unit $u$ does not exist, then $A:=A_U$ is not a
CMC-subset of $U.$ Indeed, for any unit $u'$ of $U$ with
$u'(A+A)\subset A,$ we infer that $eu'=u'(1+1)\in A,$ i.e.,
$eu'\le 1,$ which forces~ $eu'<1.$\end{example}

In the following $R$ is a semiring and $A$ is a true CMC-subset of
$R.$ In \S\ref{sec:1} we constructed $m$-valuations
$$v_B: R\twoheadrightarrow M(R,B),\qquad v_{\mfp}: R \twoheadrightarrow M(R,\mfp)$$
for $B$ a proper CMC-subsemiring of $R$ and -- with more detailed
arguments -- for $\mfp$ a prime of~$R.$

We now will find, proceeding exactly in the same way, a map $v_A:
R\twoheadrightarrow M(R,A)$ with $M(R,A)$ again a bipotent
semidomain, but $v_A$ will be a multiplicative map showing a
behavior under addition somewhat weaker than $m$-valuations do.

\begin{prop}\label {prop4.4}
If $x,y\in R$ and $[A:x]\not\subset[A:y],$ then
$[A:y]\subset[A:x].$
\end{prop}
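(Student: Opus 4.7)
The plan is to imitate the proof of Proposition~\ref{prop1.5} verbatim, noting that the only properties of $\mathfrak{p}$ used there were that both $\mathfrak{p}$ and $R\setminus\mathfrak{p}$ are closed under multiplication — and these are part of the definition of a CMC-subset. The additional structure of a true CMC-subset (the exponent $u$ with $u(A+A)\subset A$, as well as the conditions $0,1\in A$) plays no role here.

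More precisely, I would proceed as follows. Since $[A:x] \not\subset [A:y]$, choose some $z\in R$ with $zx\in A$ but $zy\notin A$. Now pick an arbitrary $u\in[A:y]$, so that $uy\in A$. I then apply \lemref{lem1.4} with $L=A$, using $s=z$ and $t=u$: the hypothesis of the lemma is satisfied because $sx=zx\in A$ and $ty=uy\in A$, and the hypothesis on $L$ is satisfied because $A$ and $R\setminus A$ are both closed under multiplication. The conclusion is that $sy=zy\in A$ or $tx=ux\in A$. Since $zy\notin A$ by our choice of $z$, we must have $ux\in A$, i.e., $u\in[A:x]$. As $u$ was arbitrary, $[A:y]\subset[A:x]$.

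There is no real obstacle; the proof is a word-for-word adaptation of \propref{prop1.5}, and the work has already been encapsulated in \lemref{lem1.4}, which was deliberately stated for an arbitrary set $L\subset R$ with both $L$ and $R\setminus L$ multiplicatively closed precisely to cover this situation.
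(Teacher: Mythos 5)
Your proof is correct and matches the paper's own, which simply says to ``argue as in the proof of Proposition~\ref{prop1.5} by using Lemma~\ref{lem1.4} with $L=A$.'' You have spelled out precisely that argument, and your observation that only the multiplicative closure of $A$ and $R\setminus A$ is used is exactly the point.
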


\begin{proof} Argue as in the proof of \propref{prop1.5} by using
\lemref{lem1.4} with $L=A.$\end{proof}

The proposition gives us an equivalence relation $\sim_{R,A}$ on
the set $R,$ defined by
$$x\sim_{R,A}y \ \Leftrightarrow \ [A:x]=[A:y],$$
and then a total ordering on the set
$$M(R,A):=R/\sim_{R,A}$$
of equivalence classes, given by
$$[x]\le[y] \ \Leftrightarrow \ [A:x]\supset[A:y]$$
where we denote the equivalence class of an element $z\in R$ by
$[z],$ or more precisely $[z]_{R,A}$ if necessary. The equivalence
relation $\sim_{R,A}$ turns out to be multiplicative; hence we
have  a well-defined multiplication  on $M(R,A)$ given by
$[x]\cdot[y]:=[xy].$ It has the unit element $[1],$ and the
ordering is compatible with multiplication. Moreover, $[0]$ is the
smallest element of $M(R,A)$. The set $M(R,A)\setminus\{[0]\}$
turns out to be closed under multiplication. Indeed, for $x\in R$
we have $[x]=[0]$ iff $[A:x]=[A:0]=R,$ i.e., $Rx\subset A.$ Thus,
if $[x]\ne0,$ $[y]\ne0,$ we have elements $s,t\in R$ with
$sx\notin A,$ $ty\notin A,$ hence $(st)(xy)\notin A,$ hence
$[xy]\ne [0].$

Thus we may regard $M(R,A)$ as a bipotent semidomain.

\begin{thm}\label{thm4.5}
{}\quad

\begin{enumerate}
\item[i)] The map
$$v:=v_A:R\to M(R,A),\qquad x\mapsto [x],$$
is multiplicative, and $v(0)=0,$ $v(1)=1.$

\item[ii)]
$v^{-1}(0)=\{ x\in R|Rx\subset A\}.$

\item[iii)] If $u$ is an
exponent of $A$ then, for all $x,y\in R$
\begin{equation}\label{4.1}
v(x+y)\le v(u^{-1})\cdot \max(v(x),v(y)).\end{equation} \item[iv)]
$\{x\in R|v(x)\le 1\}=A.$ \item[v)] $\{x\in R|v(x)<1\}=P(A)$ with
\begin{equation}\label{4.2}
P(A):=\{x\in R \ds|  \exists y\in R\setminus A: xy\in
A\}.\end{equation}
\end{enumerate}\end{thm}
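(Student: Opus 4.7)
The plan is to mirror closely the structure of the proofs of Theorem \ref{thm1.6} and Proposition \ref{prop1.8}, since the constructions for a CMC-subset $A$ and for a prime $\mfp$ are strictly parallel. Most of the work is already packaged in the preceding paragraphs (the partial ordering of $M(R,A)$ is total by Proposition \ref{prop4.4}, the equivalence relation $\sim_{R,A}$ is multiplicative, $M(R,A)\smin\{[0]\}$ is closed under multiplication). Items (i) and (ii) are therefore formalities; item (iii) is the only genuinely new piece of content, where the exponent $u$ is used for the first time.

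For (i), the equalities $v(0)=[0]=0_{M(R,A)}$, $v(1)=[1]=1_{M(R,A)}$, and $v(xy)=[xy]=[x][y]=v(x)v(y)$ are immediate from the definition of the multiplication on $M(R,A)$. For (ii), $v(x)=0$ means $[x]=[0]$, i.e.\ $[A:x]=[A:0]=R$, which is exactly the statement $Rx\subset A$.

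Item (iii) is where the exponent $u$ enters. Without loss of generality I assume $v(x)\le v(y)$, equivalently $[A:x]\supset[A:y]$. I must show
$$[x+y]\le[u^{-1}]\cdot[y]=[u^{-1}y],\quad\text{i.e.,}\quad [A:u^{-1}y]\subset[A:x+y].$$
Take $w\in[A:u^{-1}y]$, so that $wu^{-1}y\in A$. Then $wu^{-1}\in[A:y]\subset[A:x]$, which gives $wu^{-1}x\in A$ as well. Since $u$ is an exponent of $A$, i.e.\ $u(A+A)\subset A$, I conclude
$$w(x+y)=u(wu^{-1}x+wu^{-1}y)\in A,$$
so $w\in[A:x+y]$, as desired.

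For (iv), $v(x)\le1=v(1)$ means $[A:x]\supset[A:1]=A$. Since $1\in A$, this implies $x=1\cdot x\in A$; conversely, if $x\in A$ then $Ax\subset A$ since $A$ is closed under multiplication, so $A\subset[A:x]$. For (v), I combine (iv) with the observation that $v(x)<1$ iff $x\in A$ and $[A:x]\supsetneqq A$, the latter meaning there exists $z\in R\smin A$ with $zx\in A$; this is exactly the definition of $P(A)$. The containment $P(A)\subset A$ is automatic from the fact that $R\smin A$ is closed under multiplication (otherwise $x,y\notin A$ would force $xy\notin A$, contradicting $xy\in A$), so no separate check that $x\in A$ is needed for elements of $P(A)$.

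The only step that is not a direct copy of earlier arguments is (iii), and the sole subtlety there is to pass from $w$ to $wu^{-1}$ before invoking the hypothesis $[A:x]\supset[A:y]$; after that, the defining property $u(A+A)\subset A$ of the exponent finishes the job in one line.
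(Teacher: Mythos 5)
Your proof is correct and follows essentially the same route as the paper's. The only cosmetic difference is in (iii): where the paper takes $z\in[A:y]$ and shows $uz\in[A:x+y]$, concluding $v(u(x+y))\le v(y)$ and then multiplying through by $v(u^{-1})$, you instead take $w\in[A:u^{-1}y]$ (i.e.\ $w=uz$) and show $w\in[A:x+y]$ directly; these are the same argument up to the substitution $w=uz$.
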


\begin{proof}
i) and ii) are obvious from the above. \pSkip

iii): We may assume that $v(x)\le v(y),$ i.e., $[A:x]\supset
[A:y].$ Let $z\in [A:y],$ then also $z\in[A:x].$ Since both
$zx,zy$ are in $A$, we conclude that
$$uz(x+y)\in u(A+A)\subset A.$$
Thus $z\in [A:u(x+y)].$ This proves that $[A:y]\subset
[A:u(x+y)],$ in other terms, $$v(u(x+y)\le v(y).$$ Since $v$ is
multiplicative, we conclude that $v(x+y)\le v(u^{-1})v(y).$ \pSkip

iv): $v(x)\le 1$ iff $[A:x]\supset [A:1]=A$ iff $Ax\subset A$ iff
$x\in A.$ \pSkip

v): $v(x)<1$ iff $x\in A$ but $[A:x]\ne A$ iff there exists some
$s\in R\setminus A$ with $sx\in A.$\end{proof}

In \S\ref{sec:1} we have seen that in much the same way primes and
CMC-subsemirings of $R$ give us $m$-valuations, namely $\Vz$- and
$V$-valuations. In the present context we have a similar story
dealing with CMC-subsets of $R$ and ``prime subsets" of $R,$ to be
defined now.

\begin{defn}\label{defn4.6} A \bfem{prime subset of} $R$ is a set
$\mfp\subset R$ such that $0\in\mfp,$ $1\notin \mfp$, both $\mfp$
and~$R\setminus\mfp$ are closed under multiplication, and there
exists a unit $u$ of $R$ with
$$u(\mfp+\mfp)\subset\mfp.$$
We call any such unit $u$ an \bfem{exponent} of $\mfp.$

The primes of $R$ are the prime subsets of $R$ which have exponent
1. The other prime subsets will be called the \bfem{true prime
subsets} of $R.$
\end{defn}

\begin{examp}\label{examp4.7}
Let $A$ be a CMC-subset of $R $ with exponent $u.$ Then
$$P(A):=\{x\in R \ds |  \exists y\in R\setminus A:xy\in A\}$$
turns out to be a prime subset of $R$ with exponent $u.$ This
follows easily from the description
$$P(A)=\{x\in R \ds|  v_{R,A}(x)<1\}$$
in \thmref{thm4.5} and the properties of $v_{R,A}$ stated in parts
i) and iii) of that theorem.\end{examp}

We call $P(A)$ the \bfem{central prime set} of the CMC-set $A$ (in
$R).$

\begin{examples}\label{examps4.8}
Let $R$ be a totally ordered field.
\begin{itemize}
    \item[a)] The open unit interval
$$]-1,+1[_R:=\{x\in R \ds|-1<x<1\}$$
is a true prime subset of $R$ with exponent $\frac{1}{2}.$ It is
the central prime set of $[-1,1]_R$.

\item[b)] The open unit disk
$$\{x+iy \ds |x^2+y^2<1\}$$
of the field $R(i)$, $i=\sqrt{-1},$ is a true prime subset of
$R(i)$ with exponent $\frac{1}{2}.$ It is the central prime set of
the closed unit disk of $R(i).$
\end{itemize}
\end{examples}

\begin{example}\label{examp4.9}
Let $U$ be a totally ordered supertropical semiring. Assume that
there exists a unit $u$ of $U$ with $eu<1.$ Then
$$\mfp_U:=\{x\in U \ds |x<1\}$$ is a prime subset of $U$ with
exponent $u.$ It is the central prime subset of the CMC-subset
$A_U$ of $U$ discussed in Examples \ref{examps4.3}.
\end{example}

Let $R$ be a semiring, as before, and $\mfp$ a prime subset of $R$
with exponent $u.$ Then we see exactly as above that for any
$x,y\in R$ either $[\mfp:x]\subset[\mfp:y]$ or
$[\mfp:y]\supset[\mfp:x],$ and we obtain a map
$$v:=v_{\mfp}:R\twoheadrightarrow M(R,\mfp)$$
onto a bipotent semidomain $M(R,\mfp)$ such that
$$v(x)\le v(y) \ds \Leftrightarrow [\mfp:x]\supset[\mfp:y].$$ $M(R,\mfp)$ is obtained from $R$ by dividing out the
equivalence relation given by
$$x\sim y \ds \Leftrightarrow [\mfp:x]=[\mfp:y],$$ and
$v(x)$ is the equivalence class of $x$ in this relation.

Parallel to \thmref{thm4.5}, we have the following facts to be
proved in an analogous way.

\begin{thm}\label{thm4.10}
\quad{}

\begin{enumerate}\item[i)] The map $v:=v_{\mfp}$ is
multiplicative and $v(0)=0,$ $v(1)=1.$

\item[ii)] $v^{-1}(0)=\{x\in R| \ Rx\subset \mfp\}.$

\item[iii)] If $u$ is an exponent of $\mfp$ then, for all
$x,y\in R$
\begin{equation}\label{4.3}
v(x+y)\le v(u^{-1})\cdot\max(v(x),v(y)).\end{equation}

\item[iv)]$ \{x\in R| \ v(x)\le 1\}=A(\mfp)$ with
\begin{equation}\label{4.4}
A(\mfp):=\{x\in R| \ yx\subset\mfp\}=[\mfp:\mfp]\end{equation}

\item[v)] $ \mfp = \{x\in R| \ v(x)<1\}=\{ x\in R \ds |  \exists y \in R \setminus \mfp : xy \in \mfp  \}$. \end{enumerate}\end{thm}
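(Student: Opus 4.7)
The plan is to mirror the proof of \thmref{thm4.5} step by step, replacing the CMC-subset $A$ by the prime subset $\mfp$. All constructions we need are already in place: the equivalence relation $x \sim y \Leftrightarrow [\mfp : x] = [\mfp : y]$, the induced bipotent semidomain $M(R, \mfp)$ with ordering $[x] \le [y] \Leftrightarrow [\mfp : x] \supset [\mfp : y]$, and the map $v = v_\mfp$ sending $x$ to its class $[x]$.

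Parts (i) and (ii) are formal. For (i) we just note that multiplication on $M(R, \mfp)$ is defined by representatives, so $v(xy) = [xy] = [x][y] = v(x)v(y)$, and by construction $[0]$, $[1]$ are the zero and one of $M(R, \mfp)$. For (ii), $v(x) = 0$ iff $[\mfp : x] = [\mfp : 0] = R$, which unpacks to $Rx \subset \mfp$. Part (iv) is a direct unfolding of the ordering: $v(x) \le 1$ iff $[\mfp : x] \supset [\mfp : 1] = \mfp$ iff $x \mfp \subset \mfp$, i.e., $x \in [\mfp : \mfp]$.

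Part (iii) is the main content and will be proved exactly as in \thmref{thm4.5}(iii), but now exploiting $u(\mfp + \mfp) \subset \mfp$. Assume without loss of generality that $v(x) \le v(y)$, so $[\mfp : x] \supset [\mfp : y]$. Pick any $z \in [\mfp : y]$; then both $zy$ and $zx$ lie in $\mfp$, so
\begin{equation*}
uz(x + y) = u(zx + zy) \in u(\mfp + \mfp) \subset \mfp,
\end{equation*}
which shows $z \in [\mfp : u(x + y)]$. Hence $[\mfp : y] \subset [\mfp : u(x + y)]$, i.e., $v(u(x+y)) \le v(y)$. Since $u \in R^\times$, multiplicativity of $v$ makes $v(u)$ invertible in $M(R, \mfp)$ with inverse $v(u^{-1})$; multiplying by $v(u^{-1})$ yields $v(x+y) \le v(u^{-1}) \cdot v(y) = v(u^{-1}) \cdot \max(v(x), v(y))$.

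For (v), the first equality is an easy unfolding: $v(x) < 1$ iff $[\mfp : x] \varsupsetneq \mfp$, and combined with (iv) this says $x\mfp \subset \mfp$ together with the existence of $s \in R \setminus \mfp$ with $sx \in \mfp$; but the closure properties of $\mfp$ and $R \setminus \mfp$ under multiplication then force $x \in \mfp$, and conversely every $x \in \mfp$ trivially satisfies $v(x) < 1$. The second equality requires essentially no work: if $x \in \mfp$, take $y = 1 \in R \setminus \mfp$; conversely, if $xy \in \mfp$ with $y \in R \setminus \mfp$, then $x \in \mfp$ since otherwise $x, y \in R \setminus \mfp$ would give $xy \in R \setminus \mfp$. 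I do not anticipate any real obstacle; the only spot where one has to pay attention is the passage from $v(u(x+y)) \le v(y)$ to the final inequality in (iii), which depends on noticing that units of $R$ map to units of $M(R, \mfp)$.
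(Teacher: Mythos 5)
Your proof is correct and follows exactly the route the paper intends: the paper explicitly states that Theorem~\ref{thm4.10} is "to be proved in an analogous way" to Theorem~\ref{thm4.5}, and your argument faithfully transcribes that proof with $A$ replaced by $\mfp$, invoking $u(\mfp + \mfp) \subset \mfp$ in part (iii) exactly as $u(A+A) \subset A$ is used there. The one point you flag (that $v(u)$ is a unit of $M(R,\mfp)$ because $v$ is multiplicative and $u$ is a unit of $R$) is indeed the only place requiring a moment's thought, and your handling of it matches the paper's.
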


\begin{cor}\label{cor4.11}
$A(\mfp)$ is a CMC-subset of $R$ with exponent $u$.
\end{cor}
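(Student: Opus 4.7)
\medskip

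The plan is to argue entirely through the characterisation $A(\mfp) = \{x \in R \ds | v(x) \le 1\}$ provided by Theorem~\ref{thm4.10}.iv, exploiting the fact that $v := v_{\mfp}$ is multiplicative and takes values in the totally ordered bipotent semidomain $M(R,\mfp)$, whose order is compatible with its multiplication. First, I would dispatch the easy points: $v(0)=0\le 1$ and $v(1)=1$ give $0,1\in A(\mfp)$, and for $x,y\in A(\mfp)$ the identity $v(xy)=v(x)v(y)\le 1\cdot 1=1$ shows closure of $A(\mfp)$ under multiplication.

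Next I would verify that $R\setminus A(\mfp)=\{x\ds|v(x)>1\}$ is closed under multiplication. Given $v(x)>1$ and $v(y)>1$, compatibility of $\le$ with multiplication in $M(R,\mfp)$ yields $v(xy)=v(x)v(y)\ge v(y)>1$, so $xy\notin A(\mfp)$. (Note that no cancellation is needed; only the weak monotonicity $a\ge 1\Rightarrow ab\ge b$ combined with $b>1$.)

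The one nontrivial point is the exponent condition $u(A(\mfp)+A(\mfp))\subset A(\mfp)$. Since $u$ is a unit of $R$, we have $v(u)v(u^{-1})=v(uu^{-1})=v(1)=1$, so $v(u)$ is invertible in $M(R,\mfp)$ with inverse $v(u^{-1})$. Now take $x,y\in A(\mfp)$. By Theorem~\ref{thm4.10}.iii,
\[
v(x+y)\;\le\; v(u^{-1})\cdot\max(v(x),v(y))\;\le\; v(u^{-1}),
\]
and therefore, using multiplicativity and monotonicity once more,
\[
v(u(x+y))\;=\;v(u)\cdot v(x+y)\;\le\; v(u)\cdot v(u^{-1})\;=\;1,
\]
i.e.\ $u(x+y)\in A(\mfp)$, as required. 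This also shows that the very same unit $u$ serving as an exponent for $\mfp$ serves as an exponent for $A(\mfp)$.

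I do not foresee a real obstacle: the whole argument is just bookkeeping on the inequalities supplied by Theorem~\ref{thm4.10}. The only moment that calls for some care is ensuring that $v$ sends the unit $u$ to an invertible element of the (not necessarily cancellative) monoid $M(R,\mfp)\setminus\{0\}$, but this is automatic from multiplicativity and $v(1)=1$.
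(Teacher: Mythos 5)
Your proof is correct and takes essentially the same route as the paper, which simply states ``This follows from points i), iii), iv) in the theorem'' — you have spelled out exactly the bookkeeping that the paper leaves implicit, including the correct observation that no cancellation in $M(R,\mfp)$ is needed for closure of the complement under multiplication or for the exponent estimate $v(u)\cdot v(x+y)\le v(u)\cdot v(u^{-1})=1$.
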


\begin{proof} This follows from points i), iii), iv) in the theorem.
\end{proof}

\begin{defn} We call a map $v: R \to R' $ from $R$  to a semiring
$R'$ \textbf{0-1-multiplicative} if $v$ is multiplicative, i.e.,
$$ \forall x,y \in R: \quad v(xy) = v(x) \cdot v(y),$$
and $v(0)=0$, $v(1)=1$.
\end{defn}

In the following 0-1-multiplicative maps from $R$ to bipotent
semirings which are not m-valuations, will play  a major role. We
already met such maps in Theorem \ref{thm4.5} and \ref{thm4.10}.
The following remark is sometimes useful

\begin{remark}\label{rmk4.13}
Every surjective multiplicative map   $v: R \to R'$ form $R$ to a
semiring  $R'$ is 0-1-multiplicative.
\end{remark}

\begin{proof} Let $z \in R' $ be given. We choose some $x\in R$
with $v(x) = z$. Then
$$
\begin{array}{lllll}
v(1) \cdot z  & =  v(1) \cdot v(x) & =  v(1 \cdot x) &= v(x) &  =
z, \\[2mm]
 v(0) \cdot z & =  v(0) \cdot v(x) & =  v(0 \cdot x) &  = v( 0).& \end{array}
$$
From $ v(1) \cdot z = z$ for every $z \in R'$ we  conclude that
that $v(1)=1.$ From $ v(0) \cdot z = 0$ for every $z \in R'$ we
conclude that  $v(0)=0$, since otherwise we  would have $z \neq 0$
for every $z \in R'$, which is not true.
\end{proof}

We extend the notion of dominance for $m$-valuations on $R$
\cite[\S 2]{IKR1} to 0-1-multiplicative maps from $R$ to bipotent
semirings.

\begin{defn}\label{defn4.13}  If $v:R\to
M$ and $w:R\to N$ are 0-1-multiplicative maps  from $R$ to
bipotent semirings $M,N$, we say that $v$ \bfem{dominates} $w,$
or, that $w$ is a \bfem{coarsening of} $v,$ if
$$\forall a,b\in R: \qquad  v(a)\le v(b) \ \Rightarrow \ w(a)\le w(b).$$
We then write $v \geq w.$
\end{defn}

The following proposition is now obvious.
\begin{prop}\label{prop4.15}
Assume that  $v:R\to M$ and $w:R\to N$ are 0-1-multiplicative maps
from $R$ to bipotent semirings $M,N$. Assume further that $v$ is
surjective. Then $v \geq w$  iff there exists a (unique) semiring
homomorphism  $\gm: M \to N$ with $\gm \circ v = w$.

\end{prop}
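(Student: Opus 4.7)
The plan is to prove both directions by explicit construction, using surjectivity of $v$ to define $\gamma$ pointwise and the bipotent structure of $M,N$ to match up order and addition.

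For the direction ($\Leftarrow$), suppose a semiring homomorphism $\gamma \colon M \to N$ with $\gamma \circ v = w$ exists. The key observation is that any semiring homomorphism between bipotent semirings automatically preserves the order, since in a bipotent semiring the relation $\alpha \leq \beta$ is equivalent to $\alpha + \beta = \beta$, and this equation is preserved by $\gamma$. Therefore $v(a) \leq v(b)$ gives $w(a) = \gamma(v(a)) \leq \gamma(v(b)) = w(b)$, so $v \geq w$. Uniqueness of $\gamma$ (when it exists) is immediate from surjectivity of $v$.

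For the direction ($\Rightarrow$), assume $v \geq w$. I would define $\gamma \colon M \to N$ by choosing, for each $\alpha \in M$, some $a \in R$ with $v(a) = \alpha$ (possible since $v$ is surjective) and setting $\gamma(\alpha) := w(a)$. Well-definedness is the first thing to check: if $v(a) = v(b)$, then both $v(a) \leq v(b)$ and $v(b) \leq v(a)$, whence $w(a) \leq w(b)$ and $w(b) \leq w(a)$ by the dominance hypothesis, so $w(a) = w(b)$. The identities $\gamma(0) = 0$ and $\gamma(1) = 1$ follow from $v(0)=0$, $v(1)=1$, $w(0)=0$, $w(1)=1$.

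It remains to verify that $\gamma$ is a semiring homomorphism. Multiplicativity follows because if $v(a) = \alpha$ and $v(b) = \beta$ then $v(ab) = \alpha \beta$ and so $\gamma(\alpha\beta) = w(ab) = w(a) w(b) = \gamma(\alpha)\gamma(\beta)$. For additivity, exploit bipotence: given $\alpha,\beta \in M$, without loss of generality $\alpha \leq \beta$, so $\alpha + \beta = \beta$, and I must show $\gamma(\alpha) + \gamma(\beta) = \gamma(\beta)$, equivalently $\gamma(\alpha) \leq \gamma(\beta)$ in the bipotent semiring $N$. But this is precisely what the dominance hypothesis provides: pick representatives $a,b$ with $v(a) = \alpha$, $v(b) = \beta$, so $v(a) \leq v(b)$ gives $w(a) \leq w(b)$, i.e., $\gamma(\alpha) \leq \gamma(\beta)$. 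There is no real obstacle here; the main conceptual point is just that bipotence reduces all order and addition compatibilities to the single monotonicity statement built into the definition of dominance.
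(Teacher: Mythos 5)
Your proof is correct and is exactly the argument the paper is implicitly appealing to when it declares Proposition~\ref{prop4.15} ``now obvious'' (no proof is written out in the paper; it relies on the parallel with \cite[\S2]{IKR1}). You construct $\gm$ pointwise via surjectivity, check well-definedness and multiplicativity directly from the dominance axiom, reduce additivity to order-preservation via bipotence, and use the same bipotence observation for the converse direction — this is the complete, standard verification and there is no gap.
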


As in \cite[\S 2]{IKR1} we denote this homomorphism $\gm$ by
$\gm_{w,v}$ if necessary. \pSkip

CMC-subsets and prime subsets of $R$ will lead  to many examples
of dominance of 0-1-multiplicative maps to bipotent semirings.
Since we will work with these subsets at an equal pace, we coin
the following notion.

\begin{defn}\label{defn4.14} A \bfem{generalized CMC-subset} $L$
of $R$ is either a CMC-subset $A$ or a prime subset $\mfp$ of $R.$
We call $L$ a \bfem{true} CMC-subset of $R$ if $L$ is neither a
subsemiring nor a prime of $R.$\end{defn}

Given a true generalized CMC-subset $L$ of $R$, we want to
interpret the surjective multiplicative  map $v_L$ as a tangible
value-ordered supervaluation in much the same way as we did this
in \S\ref{sec:3} for $L$ a CMC-subsemiring or a prime $\mfp$ of
$R.$

Two strategies come to mind:
      Find either a
proper CMC-subsemiring $B\supset L$ or   a prime $Q\subset L$ of
$R$ such that $v_L$ dominates the $m$-valuation $w:=v_B$  or $w:=
v_Q$ respectively!
  Then $v_L$ will also dominate the
\bfem{valuation} $w/C$ introduced in \S\ref{sec:2}.
 If, in addition, $v^{-1}(0)=w^{-1}(0),$ we can obtain the desired vo-supervaluation
by a straightforward generalization of
Construction~\ref{constr3.9}.

It will turn out that both strategies usually work well. Starting
from our true generalized CMC-subset $L$ of $R$ with exponent $u$,
we define the sets
$$B:=B_u(L):=\bigcup_{n\in\mathbb N}u^{-n}L,$$
$$Q:=Q_u(L):=\bigcap_{n\in\mathbb N}u^{n}L.$$

\begin{thm}\label{thm4.15}
{}\quad

\begin{enumerate}\item[i)] $B$ is a CMC-subsemiring of $R$
containing $L,$ and $Q$ is a prime subset of $R$ contained in $L.$

\item[ii)] $Q$ is a prime ideal of $B.$ \item[iii)] The
multiplicative map $v_L:R\twoheadrightarrow M(R,L)$ dominates the
$V^0$-valuation $v_Q: R\twoheadrightarrow M(R,Q),$ and
$v_L^{-1}(0)=v_Q^{-1}(0).$

\item[iv)] If $B\ne R,$ then $v_L$ dominates the $V$-valuation
$v_B: R\twoheadrightarrow M(R,B)$, and
$v_L^{-1}(0)=v_B^{-1}(0).$\end{enumerate}\end{thm}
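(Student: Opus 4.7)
My plan hinges on first pinning down two structural facts about the exponent $u$. First, $u\in L$: if instead $u\in R\setminus L$, then for $a,b\in L$ with $a+b\notin L$ one would have $u(a+b)\in(R\setminus L)\cdot(R\setminus L)\subset R\setminus L$, contradicting $u(L+L)\subset L$; hence $L+L\subset L$, making $L$ either a CMC-subsemiring or a prime in the sense of Definition~\ref{defn1.2}, contradicting its trueness. Second, $u^{-1}\notin L$: in the prime-subset case $u^{-1}\in\mathfrak p$ forces $1=uu^{-1}\in\mathfrak p$; in the CMC-subset case $u^{-1}\in A$ combined with $u(A+A)\subset A$ yields $A+A\subset u^{-1}A\subset A\cdot A\subset A$, again contradicting trueness. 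Coupled with $u(a+0)=ua\in L$, these give $uL\subset L$, so the family $(u^{-n}L)_n$ is increasing and $(u^nL)_n$ is decreasing; in particular $L\subset B$ and $Q\subset L$.

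For (i), I will verify closure of $B$ under addition by aligning both summands at a common level $u^{-N}L$ and invoking $u(L+L)\subset L$; closure under multiplication is inherited from $L$; and $1\in B$ since $u\in L$ gives $1=u^{-1}u\in u^{-1}L$. Closure of $R\setminus B$ under multiplication uses the fact that $L$ and $R\setminus L$ being both multiplicatively closed makes $L$ \emph{multiplicatively prime} ($ab\in L\Rightarrow a\in L$ or $b\in L$): if $x,y\notin B$, then $u^nx\notin L$ for every $n$ and $y\notin L$, so $u^n(xy)=(u^nx)y\in(R\setminus L)(R\setminus L)\subset R\setminus L$. For $Q$, multiplicative closures are similarly routine; $1\notin Q$ follows from $u^{-1}\notin L$; and $Q+Q\subset Q$ is obtained by writing $a,b\in Q$ as $a=u^{n+1}a'$, $b=u^{n+1}b'$ with $a',b'\in L$, whence $a+b=u^n\cdot u(a'+b')\in u^nL$ for every $n$. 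Thus $Q$ is in fact a prime in the sense of Definition~\ref{defn1.2}, which is exactly what (iii) requires.

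Part (ii) needs $Q+Q\subset Q$ (just proved) and $B\cdot Q\subset Q$: for $x=u^{-m}a\in B$ and arbitrary $k\geq 0$, I choose a representation $q=u^{m+k}q_{m+k}$ of $q\in Q$, giving $xq=u^k a q_{m+k}\in u^kL$. Primality of $Q$ inside $B$ again exploits multiplicative primality of $L$: if $x,y\in B\setminus Q$ with witnesses $u^{-n_0}x,u^{-n_1}y\in R\setminus L$, then $u^{-(n_0+n_1)}xy\in R\setminus L$ shows $xy\notin Q$. For (iii), I will prove $v_L\geq v_Q$ by taking $z\in[Q:b]$, reading $zb\in Q$ as $u^{-n}z\in[L:b]\subset[L:a]$ for every $n$, hence $za\in u^nL$ for every $n$, i.e.\ $za\in Q$. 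The support equality $v_L^{-1}(0)=v_Q^{-1}(0)$ is then automatic: since $u$ is a unit, $Rx\subset L$ is equivalent to $Rx\subset u^nL$ for every $n$, i.e.\ $Rx\subset Q$.

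For (iv), the dominance $v_L\geq v_B$ follows by the same argument at a single level: $z\in[B:b]$ means $u^nzb\in L$ for some $n$, so $u^nz\in[L:b]\subset[L:a]$ and $za\in u^{-n}L\subset B$. The main obstacle, and the unique place the hypothesis $B\neq R$ intervenes, is the support equality: $v_L^{-1}(0)\subset v_B^{-1}(0)$ is immediate from $L\subset B$, but for the reverse inclusion I pick some $y\in R\setminus B$ (nonempty by hypothesis), so that $u^ny\in R\setminus L$ for all $n\geq 0$. Given $Rx\subset B$, the element $yx\in B$ satisfies $(u^ny)x=u^n(yx)\in L$ for some $n$, and multiplicative closure of $R\setminus L$ then forces $x\in L$; applying this same reasoning to each $rx$ (using $R(rx)\subset Rx\subset B$) upgrades to $Rx\subset L$.
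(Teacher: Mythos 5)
Your proof is correct and follows the same overall plan as the paper's (show $B$ is a CMC-subsemiring, $Q$ is a prime, $Q$ a prime ideal of $B$, dominance via comparison of $[L:x]$ with $[B:x]$ and $[Q:x]$, then support equality). The one genuine addition you make is the explicit derivation of the two preliminary facts $u\in L$ and $u^{-1}\notin L$ from the trueness of $L$, and consequently $uL\subset L$ with the increasing/decreasing behaviour of $(u^{-n}L)_n$ and $(u^nL)_n$. The paper uses these tacitly (e.g.\ ``$1=u^{-1}u\in u^{-1}L$'' and ``$1\notin uL$'' both presuppose $u\in L$ and $u^{-1}\notin L$, and aligning two summands of $B$ in a common $u^{-N}L$ quietly uses monotonicity), so your supplement closes a real, if small, gap in the exposition. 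Your support argument in (iii), using that $u$ is a unit so $Rx\subset L\Leftrightarrow Rx\subset u^nL$ for every $n$, is also a bit cleaner than the paper's contrapositive chase. Beyond these points the two proofs are essentially the same.
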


\begin{proof}
a) It follows from $L\cdot L\subset L$ that $B\cdot B\subset B.$
Given $n\in\mathbb N$ we have
$$u^{-n}L+u^{-n}L=u^{-n-1}u(L+L)\subset u^{-n-1}L.$$
Thus $B+B\subset B.$ Further $1=u^{-1}\cdot u\in u^{-1}L, $ hence
$1\in B.$ Of course, $L\subset B$ and in particular $0\in B.$

Let $x,y\in R\setminus B.$ For every $n\in\mathbb N$ we have
$u^nx\notin L,$ $u^ny\notin L,$ hence $u^{2n}(xy)\notin L,$ hence
$xy\in R\setminus B.$ Altogether this proves that $B$ is a
CMC-subsemiring of $R$ containing $L.$ \pSkip

b) It follows from $L\cdot L\subset L$ that $L\cdot Q\subset Q.$
Of course, $Q\subset L.$ Thus certainly $Q\cdot Q\subset Q.$ Let
$x,y\in Q$ be given. For every $n\in\mathbb N, $ we have $x\in
u^{n+1} L,$ $y\in u^{n+1}L,$ hence
$$x+y\in u^{n+1}(L+L)\subset u^nL.$$
Thus $x+y\in Q.$ Of course, $0\in Q.$ On the other hand, $1\notin
uL, $ hence $1\notin Q.$

Let $x,y\in R\setminus Q.$ There exists some $n\in \mathbb N$ with
$u^{-n}x\notin L,$ $u^{-n}y\notin L.$ Then $u^{-2n}(xy)\notin L.$
Thus $xy\in R\setminus Q.$ Altogether we have proved that $Q$ is a
prime of $R,$ contained in the set $L.$ \pSkip

c) Let $x\in Q$ be given. For any $n\in\mathbb N $ we have $x\in
u^{n+1}L,$ hence $u^{-1}x\in u^nL.$ Thus $u^{-1}x\in Q.$ This
proves that $u^{-1}Q\subset Q.$ Since also $L\cdot Q\subset Q$, we
see that $B\cdot Q\subset Q.$ Because $Q$ is a prime of $R$, it is
now obvious that $Q$ is a prime ideal of $B.$ \pSkip

 d) For any
$x\in R$ we have
$$[B:x]=\bigcup_n[u^{-n}L:x]=\bigcup_nu^{-n}[L:x].$$
Therefore
$$\forall x,y\in R: \quad [L:x]\supset[L:y]\Rightarrow
[B:x]\supset[B:y].$$ If $R\ne B$ this translates to
$$\forall x,y\in R: \quad v_L(x)\le v_L(y)\Rightarrow v_B(x)\le
v_B(y).$$ Thus $v_L$ dominates $v_B.$ \pSkip

 e) For any $x\in R$
we have
$$[Q:x]=\bigcap_n[u^nL:x]=\bigcap_nu^n[L:x].$$
It follows that
$$\forall x,y\in R: \quad [L:x]\supset[L:y]\Rightarrow
[Q:x]\supset[Q:y].$$   Thus $v_L$ dominates $v_Q.$ \pSkip

 f) As
observed before (Theorems \ref{thm4.5}, \ref{thm4.10}),
$$v_L^{-1}(0)=\{x\in R\ds|Rx\subset L\}.$$
Similarly $v_Q^{-1}(0)=\{x\in R \ds |Rx\subset Q\}$ and if $B\ne
R,$ $v_B^{-1}(0)=\{x\in R \ds |Rx\subset B\}.$ Thus
$v_Q^{-1}(0)\subset v_L^{-1}(0)$ and, if $B\ne R,$
$v_L^{-1}(0)\subset v_B^{-1}(0).$

We want to prove equality of these sets. Let $x\in R$ be given
with $Rx\not\subset  Q.$ Choose some $z\in R$ with $zx\notin Q.$
Then $u^{-n}zx\notin L$ for some $n.$ Thus $Rx\not\subset L.$ This
proves that $v_Q^{-1}(0)=v_L^{-1}(0).$

Now assume that $B\ne R.$ Let $x\in R$ be given with $Rx \not
\subset  L$. We choose elements $z,s$ of $R$ with  $xz \notin L$,
$s \notin B.$ Then $u^ns\notin L$ for all $n,$ hence $u^nszx\notin
L$ for all $n.$ It follow that $Rx\not\subset B.$ This proves that
$v_L^{-1}(0)=v_B^{-1}(0).$
\end{proof}

We describe all quantities occurring in Theorems \ref{thm4.5},
\ref{thm4.10} and \ref{thm4.15} in the perhaps simplest case of
interest.

\begin{examples}\label{examps4.16}
 Let $R$ be a real closed field. Associated to the
ordering of $R$, we have the absolute value $|\ |_R$ on $R$ with
$|x|_R=x$ if $x\ge0$ and $x _R=-x$ if $x\le 0.$ Also $R$ contains
(a unique copy of) the field $\mathbb R$ of real numbers. We
consider the CMC-subset
$$A:=\{x\in R \ds | \,  |x|_R\le 1\}=[-1,1]$$
and the prime subset
$$\mfp:=\{x\in R \ds |\, |x|_R<1\}$$
(as in Examples \ref{examps4.2}.i) of $R$, both with exponent
$u=\frac{1}{2}.$ Then
$$B_u(A)=B_u(\mfp)=\bigcup_n[-2^n,2^n] = \{x\in R \ds |\exists
n\in\mathbb N:|x|\le n\}$$ and $$Q_u(A)=Q_u(\mfp)=\bigcap
_n[-2^{-n},2^{-n}]=\Big\{x\in R \ds |\forall n\in \mathbb
N:|x|\le\frac{1}{n}\Big\}.$$ Thus, $B:=B_u(A)$ is a valuation
domain with quotient field $R$, and is also the smallest convex
subring of the ordered field $R,$ and $Q:=Q_u(A)$ is the maximal
ideal of $B.$ We have $B\ne R$ iff $\mathbb R\ne R.$

Observe that $$[A:x]\supset [A:y] \ \text{ iff } \ |x|_R\le |y|_R
\ \text{ iff } \ [\mfp:x]\supset[\mfp:y].$$ Thus $v_A=v_{\mfp},$
and we can identify this multiplicative map with the absolute
value map $$ x \mapsto |x|_R, \qquad R\twoheadrightarrow
R_{\ge0}.$$

Let $w$ denote the canonical valuation associated to $B.$ This is
the natural map
$$w: R\twoheadrightarrow R/B^*=R^*/B^*\cup\{0\},$$
with $R^*/B^*$ ordered by the rule
$$xB^*\le  y B^*\Leftrightarrow \frac{x}{y}\in B.$$
We have $B=\{x\in R \ds |w(x)\le 1\}$ and $Q=\{x\in R \ds
|w(x)<1\},$ and obtain for $x,y\in R$ in the case $B\ne R$ that
$$\text{$[B:x]\supset [B:y]$ \ iff \  $w(x)\le w(y)$ \ iff \
$[Q:x]\supset[Q:y].$}$$ Thus $v_B=v_Q\sim w.$ If $B=R$ we may
still identify $v_Q$ with the now trivial valuation $w.$

It is easily checked that $A(\mfp)=A$ and
$P(A)=\mfp.$\end{examples}

 The coincidences $B_u(A)=B_u(\mfp),$ $Q_u(A)=Q_A(\mfp),$ $A(\mfp)=A,$ $P(A)=\mfp$ in this example are
 typical for the case that the semiring $R$ is a semifield, or at
 least has ``many units" in an appropriate sense. We will pursue
 the case of semifields in \cite{IKR3}, while in the present paper
 we deal with the situation where such coincidences
 often fail.

\begin{construction}
 Given a surjective multiplicative map $w:R\twoheadrightarrow N$
 from $R$ to a bipotent semiring $N$ and a surjective valuation $v:
 R\twoheadrightarrow M$ such that $w$ dominates $v$ we have a
 semiring homomorphism $\rho: N\to M$ with $v=\rho\circ w.$ If
 also $v^{-1}(0)=w^{-1}(0)$, we can repeat Construction
 \ref{constr3.9} word by word. We obtain again an ordered
 supertropical predomain
 $$U:=\OSTR(\cT,\cG ,\rho)$$
 with $\cT=N\setminus\{0\}$, $\cG =M\setminus \{0\}$, and
 $\rho: \cT\to \cG $ gained from $\rho:N\to M$ by
 restriction. As in~\S\ref{sec:3} we identify $N=\cT\cup\{0\}\subset U$ and
 $M=\cG \cup\{0\}=eU.$ The map
 $$\varphi: R\to U,\qquad a\mapsto w(a)\in N\subset U,$$
 is a tangible supervaluation covering $v,$ but now -- in contrast
 to the situation in Construction~\ref{constr3.9} -- $v$ has no
 reason to be ultrametric. \endbox \end{construction}

\begin{examples}\label{examps4.17} $ $
\begin{enumerate}
    \item[a)] As a consequence of \thmref{thm4.15} we can apply this
construction for $L$  a true generalized CMC-subset of $R$ to
$w:=v_L:R\to M(R,L)$ and to $$v:=v_{Q}/C:R\to M(R,Q)/C.$$

 In this
case we denote the totally ordered supertropical semiring from
above by $U(R,L,Q_u(L))$ and the arising tangible
vo-supervaluation by
$$\varphi_{L,u}:R \twoheadrightarrow U(R,L,Q_u(L)).$$

\item[b)] Likewise, if $B_u(L)\ne R,$ we can take $w:=v_L$ and
$v:=v_B/C$ and obtain again a tangible  vo-supervaluation, which
we denote by
$$\psi_{L,u}:R\twoheadrightarrow U(R,L,B_u(L)).$$ \end{enumerate}

\end{examples}

If $\varphi: R\to U$ is any of these vo-supervaluations
$\varphi_{L,u},\psi_{L,u}$, then $\varphi$ obeys a rule
\begin{equation}\label{4.6} \forall a,b \in R: \quad
\varphi(a+b)\le c \max(\varphi(a),\varphi(b))\end{equation} with
$c$ a unit of $U,$ namely, $c =\varphi(u^{-1})$ for the chosen
exponent $u$ of $L.$ This follows from Theorems \ref{thm4.5}.iii
and \ref{thm4.10}.iii. \{N.B. Any unit of $U$ is a tangible
element.\}

\begin{defn}\label{defn4.18} The rule \eqref{4.6} reminds us of
the classical absolute values of Emil Artin (which he called
``valuations", cf. \cite[Chapter 1]{A1},  \cite[Chapter 3]{A2}).
Thus, we call a vo-valuation $\varphi$ obeying  \eqref{4.6} with a
unit $c$ of $U$ an \bfem{artinian supervaluation} (\textbf{with
constant} ~$c$).\end{defn}

The case $c=1$ covers the ultrametric supervaluations (Definition
\ref{defn3.7}), but for the supervaluations $\varphi_L,$ $\psi_L$
with $L$ a true CMC-subset of $R$ we have $c>1.$

Artinian supervaluations abound among \vo-valuations, and they are
a source of prime subsets and CMC-subsets, due to the following
facts.
\begin{prop}\label{prop4.22} If a totally ordered supertropical
semiring $U$ contains a unit $c > e$ then every \vo-supervaluation
$\vrp: R \to U$ is artinian with constant $c$. If in addition $R$
contains a unit $u$ with $\vrp(u) \leq c^{-1}$ then
$$ \mfp_\vrp := \{ a \in R \ds | \vrp(a) < 1 \} $$
is a prime subset of $R$ with exponent $u$ and
$$ A_\vrp := \{ a \in R \ds | \vrp(a) \leq 1 \} $$
is a CMC-subset of $R$ with exponent $u$.
\end{prop}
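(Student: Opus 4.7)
The plan is to reduce both claims to two facts that are immediate from the definitions: that $v := e\vrp$ is an $m$-valuation on the bipotent semiring $eU$, and that $x \leq ex$ holds for every $x \in U$ (axiom $(\mathrm{Gh}5)$ of Scholium~\ref{schol3.2}).

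First I would establish the artinian inequality $\vrp(a+b) \leq c \max(\vrp(a), \vrp(b))$. Since $v$ is an $m$-valuation and the ordering on the bipotent semiring $eU$ coincides with the ordering induced from $U$,
$$ v(a+b) \leq v(a) + v(b) = \max(v(a), v(b)) = e \cdot \max(\vrp(a), \vrp(b)), $$
where the last equality uses that multiplication by $e$ is order-preserving and therefore commutes with $\max$. Combining with $\vrp(a+b) \leq e\vrp(a+b) = v(a+b)$ gives $\vrp(a+b) \leq e \cdot \max(\vrp(a), \vrp(b))$, and then the hypothesis $c > e$ together with the compatibility of the total order with multiplication \eqref{3.2} yields $\vrp(a+b) \leq c \cdot \max(\vrp(a), \vrp(b))$, which is \eqref{4.6}.

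Next I would verify that $\mfp_\vrp$ is a prime subset of $R$ in the sense of Definition~\ref{defn4.6}. Clearly $0 \in \mfp_\vrp$ and $1 \notin \mfp_\vrp$. For closure under multiplication: if $\vrp(a), \vrp(b) < 1$ then $\vrp(ab) = \vrp(a)\vrp(b) \leq 1 \cdot \vrp(b) = \vrp(b) < 1$, and the analogous estimate with reversed (non-strict) inequalities shows that $R \setminus \mfp_\vrp$ is closed under multiplication. For the exponent property $u(\mfp_\vrp + \mfp_\vrp) \subset \mfp_\vrp$, I would combine the artinian inequality just proved with the hypothesis $\vrp(u) \leq c^{-1}$: for $a,b \in \mfp_\vrp$,
$$ \vrp(u(a+b)) \;=\; \vrp(u)\,\vrp(a+b) \;\leq\; c^{-1} \cdot c \cdot \max(\vrp(a), \vrp(b)) \;=\; \max(\vrp(a), \vrp(b)) \;<\; 1. $$

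The corresponding checks for $A_\vrp$ follow the same pattern, with strict inequalities replaced by the non-strict ones where appropriate: $0, 1 \in A_\vrp$; closure of $A_\vrp$ and of $R \setminus A_\vrp$ under multiplication is routine; and the identical chain of estimates shows $\vrp(u(a+b)) \leq \max(\vrp(a), \vrp(b)) \leq 1$, establishing that $u$ is an exponent of $A_\vrp$.

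I do not anticipate any real obstacle; the proof is essentially a bookkeeping exercise inside the totally ordered monoid $(U,\cdot)$. The one delicate point worth flagging is that the key cancellation $c^{-1} \cdot c = 1$ together with the order-preservation of multiplication by $\max(\vrp(a),\vrp(b))$ is exactly what forces the constant $c$ to disappear from the final estimate, which is why the hypothesis $\vrp(u) \leq c^{-1}$ (calibrated precisely to the artinian constant) is the right one.
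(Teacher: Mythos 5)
Your proof is correct and follows essentially the same route as the paper: both first establish the artinian inequality by passing from $\vrp$ to its ghost companion $e\vrp$ and then replacing $e$ by the larger unit $c$, and both then multiply by $\vrp(u)\le c^{-1}$ to cancel the constant and obtain the exponent property. You merely fill in the routine verifications (closure under multiplication of $\mfp_\vrp$, $A_\vrp$ and their complements, and the identity $\max(e\vrp(a),e\vrp(b))=e\max(\vrp(a),\vrp(b))$) which the paper leaves to the reader.
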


\begin{proof} If  $a,b \in R$ then
$$ \vrp (a+b) \leq e \vrp(a+b) \leq \max (e \vrp (a)), e \vrp (b) \leq c \max ( \vrp(a), \vrp(b)).$$
If in addition $u \in R$ and  $\vrp(u) = c^{-1}$, then
$$ \vrp (u(a+b)) \leq \max (\vrp(a), \vrp (b)).$$
This gives the claims about  $\mfp_\vrp$ and $A_\vrp$.
\end{proof}

\section{Monotone transmissions and total dominance: Some examples}\label{sec:5}
In the last section we introduced totally ordered supertropical
semirings and then obtained  a large stock of --  as we believe --
natural examples of supervaluations with values in such semirings,
which we called \emph{value ordered supervaluations}, or
\emph{\vo-supervaluations} for short.

They call for a theory of dominance and transmissions adapted to
this special class of supervaluations, which is parallel to our
general theory in \cite{IKR1} and \cite{IKR2}. We give basic steps
of such a theory. Here things seem to be easier than  in the
general theory, since our special transmissions, called
\emph{``monotone transmissions"}, turn out to be semiring
homomorphisms, cf.~ Theorem \ref{thm5.3}.

\begin{defn}\label{defn5.1} Let $U_1$, $U_2$ be totally ordered
supertropical semirings. We call a transmission $\al: U_1 \to U_2$
(as defined in \cite[\S5]{IKR1}) \textbf{monotone} if $\al$ is
order preserving, i.e.,
$$ \forall x,y \in U_1: \quad x \leq y \ \Rightarrow \ \al(x) \leq \al(y).$$

\end{defn}

\begin{example}\label{examps5.2} Let $(\tT_1, \tG_1, v_1)$ and $(\tT_2, \tG_2,
v_2)$ be triples consisting of totally ordered monoids $\tT_i$,
cancellative totally ordered monoids $\tG_i$, and order preserving
homomorphisms $v_i : \tT_i \to \tG_i$ ($i =1,2$). In \S\ref{sec:3}
we associated to such triples ordered supertropical predomains
$$ U_i:= \OSTR(\tT_i, \tG_i, v_i) =\tT_i \dcup \tG_i \dcup \{ 0 \}. $$

Now assume that also order preserving monoid homomorphisms $\bt:
\tT_1 \to \tT_2$ and \\ $\gm: \tG_1 \to \tG_2$ are given with $\gm
v_1 =  v_2 \bt$. Assume in addition that $\gm$ is injective. Then
the map $\al:U_1 \to U_2$ with $\al(0)= 0$,  $\al(x)= \bt(x)$ for
$x \in \tT_1$, $\al(y)= \gm(y)$ for $y \in \tG_1$ is a monotone
transmission.

Indeed, $\al$ clearly obeys the rules TM1-TM5 from
\cite[\S5]{IKR1}, hence is a transmission, and looking at the
rules $(\EO1)$--$(\EO5)$ in \S\ref{sec:3}, which describe the
ordering of the $U_i$, one checks that $\al$ is also order
preserving. Notice that compatibility with the rule $(\EO5)$
demands that~$\gm$ is injective.
\end{example}

Example \ref{examps5.2} gives us, up to isomorphism, all monotone
transmissions between totally ordered supertropical predomains
which map tangible elements to tangible elements.

We state  a fundamental fact about monotone transmissions in
general.

\begin{thm}\label{thm5.3}
Assume that $U$ and $V$ are totally ordered supertropical
semirings and that $\al : U \to V$ is an order preserving map,
which is multiplicative, i.e., $\al(xy) = \al(x) \al(y)$ for any
$x,y \in U$. The following are equivalent.

\begin{enumerate}
    \item[(1)] $\al(0) =0$,  $\al(1) =1$, $\al(e) =e$.
    \item[(2)] $\al$ is a semiring homomorphism.

    \item[(3)] $\al$ is a (monotone) transmission.
\end{enumerate}
\end{thm}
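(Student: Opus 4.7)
\noindent The plan is to close the equivalence via three quick implications and one substantive one. The implication (2) $\Rightarrow$ (1) is immediate from the definition of a semiring homomorphism together with $e = 1 + 1$, and (3) $\Rightarrow$ (1) is built into the definition of a transmission. For (2) $\Rightarrow$ (3), a semiring homomorphism $\al$ satisfying (1) automatically carries $eU$ into $eV$ via $\al(ez) = \al(e)\al(z) = e\al(z)$ and so restricts to a semiring homomorphism on ghost ideals, which is the only additional clause in the definition of a transmission. Thus the real content lies in (1) $\Rightarrow$ (2).

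\noindent For (1) $\Rightarrow$ (2) I would verify additivity $\al(x+y) = \al(x) + \al(y)$ by case analysis on $ex$ versus $ey$, using the addition rule recorded in Scholium \ref{schol3.2}(c). Multiplicativity together with $\al(e) = e$ gives $\al(ez) = e\al(z)$ for all $z \in U$, and since the restriction of the order on $U$ to $eU$ is the bipotent order on $eU$, order preservation of $\al$ forces $e\al(x) \leq e\al(y)$ whenever $ex \leq ey$. If $ex = ey$, then both $x+y = ex$ and $\al(x) + \al(y) = e\al(x)$ (by the addition rule applied to ghost companions that coincide), so the identity is clear. If $ex < ey$ (the case $ex > ey$ being symmetric), then $x + y = y$, whence $\al(x+y) = \al(y)$; and if moreover $e\al(x) < e\al(y)$ strictly, then $\al(x) + \al(y) = \al(y)$ and we are done.

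\noindent The main obstacle is the remaining subcase: $ex < ey$ in $U$ but $e\al(x) = e\al(y)$ in $V$. Here the addition rule yields $\al(x) + \al(y) = e\al(y)$, so I must show $\al(y) = e\al(y)$, i.e., that $\al(y)$ is ghost. The argument applies Lemma \ref{lem3.4}(ii) twice. If $y$ itself is ghost, then $\al(y) = \al(ey) = e\al(y)$ and we are done. Otherwise $y \in \tT(U)$, and Lemma \ref{lem3.4}(ii) translates the strict inequality $ex < ey$ into $ex \leq y$ as a comparison between the ghost element $ex$ and the tangible element $y$ in $U$; applying the order-preserving map $\al$ yields $e\al(x) \leq \al(y)$ in $V$. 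Were $\al(y)$ still tangible, Lemma \ref{lem3.4}(ii) applied in $V$ would force $e\al(x) < e\al(y)$ strictly, contradicting $e\al(x) = e\al(y)$. Hence $\al(y) \in \tG(V)$, closing the last subcase and completing the proof.
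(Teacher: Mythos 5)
Your proof is correct and follows essentially the same route as the paper's: both reduce (1) $\Rightarrow$ (2) to a case analysis on the comparison of $ex, ey$ and $e\al(x), e\al(y)$, with the substance lying in the subcase $ex < ey$ but $e\al(x) = e\al(y)$, resolved by observing $ex \le y$ and applying $\al$. The only cosmetic difference is in closing that subcase: the paper sandwiches $e\al(y) = e\al(x) \le \al(y) \le \al(ey) = e\al(y)$ directly (using $ex \le y \le ey$), whereas you argue by contradiction via Lemma \ref{lem3.4}(ii) in $V$ to show $\al(y)$ cannot be tangible — the same underlying fact packaged slightly differently.
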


\begin{proof} The implications  $(2) \Rightarrow (3)$ and $(3) \Rightarrow (1)$ are  trivial.

$(1) \Rightarrow (2)$: Given $x,y \in U$, we have to verify that
$\al(x+y) = \al(x) + \al(y)$. We may assume that $ex \leq ey$.

\begin{description}
    \item[Case 1]  $e \al (x) < e \al(y)$, hence $ex < ey$.
\\
    Now $x + y =y,$ $\al(x) + \al(y) = \al(y)$.

    \item[Case 2] $e x  = e y$, hence $e \al (x) = e \al(y)$.
\\
    Now $x + y =ex,$ $\al(x) + \al(y) = e\al(y) = \al(ey)$.
    \item[Case 3] $e x  < e y$, but $e \al (x) = e \al(y)$.
\\
    Now $ex < y < ey$, hence $e \al(y) = e \al(x) \leq \al(y) \leq \al(ey) = e
    \al(y)$,  hence \\ $\al(y) = e\al(y)$. We have $x+y = y$, $\al(x) + \al(y) = e \al(y) =
    \al(y)$.
\end{description}
Thus $\al(x) + \al(y) = \al(x+y)$  in all cases.
 \end{proof}

We  coin a notion of ``\emph{total dominance}" for \vo-valuations
refining the definition of dominance for  arbitrary
supervaluations in \cite[\S5]{IKR1}, and relate this  to monotone
transmissions.

First recall form \cite[\S5]{IKR1} that we defined for
supervaluations $ \vrp: R \to U$, $\psi: R \to V$, that $\vrp$
\emph{dominates} $\psi$, and wrote $\vrp \geq \psi$, if for all
$a,b \in R$,
\begin{align*}
& \D1: \quad   \vrp(a) = \vrp(b) \quad \Rightarrow  \  \psi(a) = \psi(b), \\
& \D2: \quad   e \vrp(a) \leq  e \vrp(b) \Rightarrow  \  e\psi(a) \leq e\psi(b), \\
& \D3: \quad   \vrp(a) \in eU  \ \; \quad \Rightarrow  \  \psi(a)
= eV.
\end{align*}
\begin{defn}\label{defn5.4} Assume that $ \vrp: R \to U$, $\psi: R \to
V$ are \vo-valuations. Then we say that $\vrp$ \textbf{dominates
$\psi$ totally}, and write $\vrp \getot \psi$, if $\vrp$ and
$\psi$ obey axiom D3 but instead of D1, D2 obey the modified
axioms
\begin{align*}
& \D1': \quad   \vrp(a) \leq \vrp(b) \quad \Rightarrow  \  \psi(a) \leq \psi(b), \\
& \D2': \quad   e \vrp(a) =  e \vrp(b) \Rightarrow  \  e\psi(a) =
e\psi(b).
\end{align*}
\end{defn}
Here axiom $\D1'$ is stronger than $\D1$ but $\D2'$ is weaker than
$\D2$. But notice that  $\D1'$ and $\D2'$ together imply $\D2$.
Indeed, if $ e\vrp(a) < e\vrp(b)$, then $ \vrp(a) < \vrp(b)$,
since $ \vrp(a) \geq \vrp(b)$ would imply $ e\vrp(a) \geq
e\vrp(b)$. From this we obtain by $\D1'$ that $\psi(a)  \leq
\psi(b)$ and then $e\psi(a) \leq e\psi(b)$. On the other hand, if
$e\vrp(a) = e \vrp(b)$, then we have  $e\psi(a) = e\psi(b)$ by
$\D2'$, hence again $e\psi(a) \leq e\psi(b)$. We conclude

\begin{remark}\label{rmk5.5}
If $\vrp \letot \psi$, then $\vrp \leq \psi$.
\end{remark}

%

We look for examples of total dominance between the artinian
supervaluations constructed at the end of \S\ref{sec:4} (Examples
\ref{examps4.17}). First an obvious remark.

\begin{remark}\label{rmk5.7}
Assume that  $ \vrp: R \to U$ is an artinian supervaluation with
constant $c$. If $\vrp$  totally dominates $\psi: R \to V$, then
$\psi$ is again artinian with constant $\al_{\psi,\vrp}(c)$. In
particular, if $\vrp$ is ultrametric then  $\psi$ is ultrametric.
\end{remark}

\begin{examples}\label{examps5.8} Let $R$ be a semiring and $L$ a
true  generalized CMC-subset of $L$. Then the artinian
supervaluation
$$\vrp_{L,u} : R \ds \to U(R,L, Q_u(L)) $$
(cf. Example \ref{examps4.17}.a) totally dominates the ultrametric
supervaluation $\vrp_{Q_u(L)}$. Likewise, if $B_n(L) \neq R$, the
artinian supervaluation
$$\psi_{L,u} : R \ds \to U(R,L, B_u(L)) $$
(cf. Example \ref{examps4.17}.b) totally dominates the ultrametric
supervaluation $\vrp_{B_u(L)}$.

\end{examples}

In these examples the associated transmissions restrict to the
identity on the ghost ideals.
\begin{examples}\label{examps5.9} Assume again that $L$ is a true
generalized CMC-subset of a semiring $R$ and that $u$ is an
exponent of~$L$.  We choose some $g \in R^* \cap L$. Then also
$f:= u g$ is an exponent of~ $L$. Let $B := B_u (L)$, $B' := B_f
(L)$, $Q := Q_u (L)$, and $Q' := Q_f (L)$.  We have $u^{-1} =
f^{-1}g \in B'$, $g^{-1} = f^{-1}u \in B'$, and we conclude easily
that $B' \supset B$ and then $B' = \bigcup_ng^{n}B$. We have $$ Q'
= \bigcap_n g^n u^n L \subset \bigcap_n  u^n g^m L = g^mQ$$ for
any fixed $m \in \mathbb N$, hence $Q' \subset \bigcap_n g^n Q$.
On the other hand,
$$ Q' = \bigcap_n g^n u^n B' \supset \bigcap_n  g^n u^n  B = \bigcap_n  g^n  B \supset \bigcap_n  g^n  Q,$$
and we conclude that $ Q' = \bigcap_ng^{n}Q$. For $x\in R $ we
have
 $$[B':x]=\bigcup_n[g^{-n}B:x]=\bigcup_n g^{-n}[B:x].$$

$$[Q':x]=\bigcap_n[g^{n}Q:x]=\bigcap_n g^{n}[Q:x].$$
From these formulas it is evident that for any $x,y \in R$
$$  [B:x] \supset [B:y] \dss \Rightarrow [B':x] \supset [B':y]$$
and
$$  [Q:x] \supset [Q:y] \dss \Rightarrow [Q':x] \supset [Q':y].$$
The second implication tells us that the \m-valuation $v_Q$
dominates $v_{Q'}$. It is now essentially trivial to verify for
$\vrp_{L,u}$ and  $\vrp_{L,f}$ the axioms $\D1'$, $\D2'$, $\D3$.
Thus the artinian supervaluation~ $\vrp_{L,u}$  dominates
$\vrp_{L,f}$ totally.

Likewise, if $B' = R$, the artinian supervaluation $\psi_{L,u}$
dominates $\psi_{L,f}$ totally.
\end{examples}

Notice that in these examples it would not be possible to resort
to the construction in Example \ref{examps5.2} for gaining
directly the appropriate monotone transmissions, since the
transmissions from $v_{B}/C$ to $v_{B'}/C$ and from $v_Q / C$ to
$v_{Q'} / C$ usually are not injective.

We add two examples of total dominance between ultrametric
supervaluations.
\begin{example}\label{examps5.10} Let $\mfp$ be a prime of a semiring
$R$, but not a prime ideal of $R$. Then $A:= A(\mfp) := [\mfp:
\mfp]$ is a proper CMC-subsemiring of $R$. For any $x \in R$ we
have
$$ [A: x] = \big[ [\mfp:\mfp]: x \big] = \big[ [\mfp:x]: \mfp \big].$$
Thus for $x,y \in R$ with $[\mfp:x] \supset [\mfp:y]$ we have
$[A:x] \supset [A:y]$. This tells us that the \VO-valuation
$v_\mfp: R \onto M(R,\mfp)$ introduced in \S\ref{sec:1} dominates
the $V$-valuation $v_A: R \onto M(R,A)$ also introduced there. It
is easily verified that the associated ultrametric supervaluation
$$ \vrp_\mfp : R \to U(R, \mfp)$$  (cf. Example
\ref{examps3.11}.i) totally dominates the ultrametric
supervaluation
$$ \vrp_A : R \to U(R, A)$$ (cf. Example
\ref{examps3.11}.ii) by checking  the axioms $\D1'$, $\D2'$,
$\D3$. Alternatively we can construct explicitly a monotone
transmission $\al: U(R,\mfp) \to U(R,A)$  with $\vrp_A = \al \circ
\vrp_\mfp$, by resorting  to a natural commuting  square of order
preserving semiring homomorphisms
 $$\xymatrix{
     M(R,\mfp)    \ar[d]  \ar[rr]^{\gm}  & &  M(R,A) \ar[d]\\
     M(R,\mfp)/C      \ar[rr]^{\gm/C}  & &  M(R,A)/C\\
  }$$
with $\gm$ the transmission from $v_\mfp$ to $v_A$,   $\gm/C$ the
transmission from $v_\mfp/C$ to $v_A/C$, and canonical vertical
arrows.
\end{example}

\begin{example}\label{examps5.11} Let $A$ be a proper CMC-subsemiring
of $R$ and $$\mfp:= P(A) := \{ x\in R \ds  | \exists y \in R \sm A
: xy \in A\},$$ which is a prime of $R$. We study again the
associated \m-valuations $v_A$ and $v_\mfp$. We have
$$\mfp=  \{ x\in R \ds | v_A (x) < 1 \}.$$ Consequently, given  $x\in R
$, an element $z$ of $R$ lies in $[\mfp: x]$ iff $v_A(x) + v_A(z)
< 1$. Thus for elements $x,y$ of $R$ with $v_A(x) < v_A(y)$ we
have $[\mfp : x ] \supset [\mfp:y]$, hence $v_\mfp(x) \leq
v_\mfp(y)$. This shows that $v_A$ dominates $v_\mfp$. One now can
verify in the same way as in the preceding example that the
ultrametric supervaluation $\vrp_A$ dominates $\vrp_\mfp$ totally.
\end{example}

\section{Order compatible TE-relations}\label{sec:6}
We now look at monotone transmissions via equivalence relations.

\begin{defn}\label{defn6.1} Let $U$ be a totally ordered
supertropical semiring. We call an equivalence  relation $E$ on
the set $U$ an \textbf{order compatible TE-relation}, or
\textbf{OCTE-relation} for short, if the following holds:

\begin{enumerate}
    \item $E$ is multiplicative, i.e., $\forall x,y,z \in E$: $x \sim_E y \Rightarrow xz \sim_E
    yz$.
    \item $E$ is {order compatible}, i.e., obeys the axiom
    (OC) form \cite[\S4]{IKR2}; equivalently, if all $E$-equivalences classes
    are convex subsets of the totally ordered set $U$.
\end{enumerate}
\end{defn}

\emph{Comment.} This terminology is related to the definition of
``TE-Relations"  (= transmissive equivalence relation) in
\cite[\S4]{IKR2}. To repeat, an equivalence relation $E$ on a
supertropical semiring $U$ with ghost ideal $M := eU$ is called a
\textbf{TE-relation}, if $E$ is multiplicative (Axiom TE1 in
\cite[\S4]{IKR2}), the restriction $E | M$ is order compatible
(Axiom TE2), and $x \in U $ with  $ex \sim_E 0$ is itself
E-equivalent to $0$ (Axiom TE3).

If $U$ is a totally ordered and $E$ is an OCTE-relation on $U$,
then clearly TE1 and TE2 are valid. But also TE3 holds: If $x \in
U$ then $0 \leq x \leq ex$, and thus $ex \sim_E 0$ implies $x
\sim_E 0$ since~$E$ is order compatible. Thus an OCTE-relation on
$U$ is certainly a  TE-relation. \endbox

\pSkip

If $E$ is an OCTE-relation, then it is obvious that the set $E/U$
of $E$-equivalence classes has a (unique) well defined structure
of a totally ordered monoid, such that the map $$\pi_E  : Y \to
U/E, \qquad x \mapsto [x]_E,$$ is multiplicative and order
preserving (cf. the arguments in the beginning of
\cite[\S4]{IKR2}).
%
This structure is given by
the rules ($x,y \in U$) \begin{equation}\label{eq:6.1} [x]_E \cdot
[y]_E = [xy]_E,
\end{equation}
 \begin{equation}\label{eq:6.2} [x]_E \leq [y]_E  \dss
\Leftrightarrow x \leq y.
\end{equation}
The monoid $U/E$ has the unit element $[1_U]_E$ and the absorbing
element $[0_U]_E$. Further it is easily checked that the map
$$ p: U/ E \ds \to U/E, \qquad [x]_E \mapsto [ex]_E,$$
obeys the rules $(\Gh1)$--$(\Gh5)$ form \S3. Thus we have on $U/E$
the structure of a totally ordered supertropical semiring, as
indicated in Theorem \ref{thm3.3}.

The ghost ideal of $U/E$ is
$$ p(eU) = \{ [x]_E \ds | x \in M \}.$$
Its unit element $e_{U/E}$ is the class $[e]_E$. We see that the
map $\pi_E: U \to U/E$ fulfills condition (1) in Theorem
\ref{thm5.3}, and we  conclude by that theorem that $\pi_E$ is a
monotone transmission. Thus for $x,y \in U$ we have the rule
\begin{equation}\label{eq:6.3}
    [x]_E + [y]_E = [x+y]_E.
\end{equation}
We have arrived at the following theorem.
\begin{thm}\label{thm6.2}
If $E$ is an OCTE-relation on a totally ordered supertropical
semiring $U$, then the set $U/E$ carries a (unique) structure of a
totally ordered supertropical semiring such that the map $\pi_E: U
\to U/E$ is a monotone transmission, and hence a semiring
homomorphism. The structure of the  ordered supertropical semiring
$U/E$ is given by the rules \eqref{eq:6.1}-\eqref{eq:6.3}.
\end{thm}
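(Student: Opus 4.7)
The plan is to construct operations on $U/E$ via \eqref{eq:6.1}--\eqref{eq:6.3}, verify they make $U/E$ a totally ordered supertropical semiring, and then invoke Theorem \ref{thm5.3} to promote $\pi_E$ to a semiring homomorphism. First, I would establish well-definedness: the multiplication \eqref{eq:6.1} follows directly from axiom TE1 (multiplicativity of $E$). The order \eqref{eq:6.2} is the delicate point: if $x \sim_E x'$ and $y \sim_E y'$ with $x \leq y$, I claim $x' \leq y'$; otherwise $y' < x'$, and then $y'$ (or symmetrically $x$) lies between two $E$-equivalent elements of the totally ordered set $U$, so convexity of $E$-classes (axiom OC) collapses all four points into a single $E$-class, in which case \eqref{eq:6.2} holds trivially by reflexivity. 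Totality of the induced order on $U/E$ descends from totality of $\leq$ on $U$, and antisymmetry follows from the same convexity argument.

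Next I would verify that $p: [x]_E \mapsto [ex]_E$ is well-defined (by TE1 applied with the fixed multiplier $e$) and obeys $(\Gh1)$--$(\Gh5)$, each property being inherited termwise from the ghost map $x \mapsto ex$ on $U$: $(\Gh1)$ from $e^2 = e$; $(\Gh2)$ from the observation that $ex \sim_E 0$ forces $x \sim_E 0$ via OC, since $0 \leq x \leq ex$; $(\Gh3)$ from multiplicativity of $e\cdot(-)$ on $U$ together with well-definedness of multiplication on $U/E$; and $(\Gh4)$, $(\Gh5)$ from the corresponding inequalities on $U$ combined with \eqref{eq:6.2}.

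With $(U/E, \cdot\,)$ a totally ordered monoid and $p$ obeying $(\Gh1)$--$(\Gh5)$, Theorem \ref{thm3.3} (or more primitively \cite[Theorem~3.1]{IKR2}) equips $U/E$ with the structure of a totally ordered supertropical semiring, whose addition is forced by Scholium \ref{schol3.2}.c; reading the three cases of that rule against the rule in Scholium \ref{schol3.2}.c for $U$ itself shows the induced addition coincides case by case with $[x+y]_E$, giving \eqref{eq:6.3}. The map $\pi_E$ is then multiplicative, order-preserving, and carries $0 \mapsto 0_{U/E}$, $1 \mapsto 1_{U/E}$, and $e \mapsto [e]_E$; the last equals $e_{U/E}$, because in the resulting structure $e_{U/E} = 1_{U/E} + 1_{U/E} = p([1]_E) = [e]_E$ by the third case of the Scholium \ref{schol3.2}.c rule. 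Theorem \ref{thm5.3} then delivers at once that $\pi_E$ is a monotone transmission and a semiring homomorphism. Uniqueness is immediate: surjectivity of $\pi_E$, together with the requirement that it be a monotone transmission, forces multiplication, order, and addition on $U/E$ to be given by \eqref{eq:6.1}--\eqref{eq:6.3}, respectively.

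The main obstacle, as I see it, is the careful well-definedness of the order \eqref{eq:6.2}; this is precisely where order-compatibility (axiom OC, i.e., convexity of $E$-classes) is essential and cannot be replaced by a purely algebraic TE-axiom. Once \eqref{eq:6.2} is shown well-defined and antisymmetric, the remaining verifications amount to transcribing properties of $U$ through the surjection $\pi_E$ into the framework of Theorem \ref{thm3.3}.
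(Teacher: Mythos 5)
Your proof is correct and follows essentially the same route as the paper: build the totally ordered monoid structure on $U/E$, verify that $p$ satisfies $(\Gh1)$--$(\Gh5)$, invoke Theorem~\ref{thm3.3} to obtain the supertropical semiring structure, and then apply Theorem~\ref{thm5.3} to conclude $\pi_E$ is a monotone transmission. The only cosmetic difference is that you verify \eqref{eq:6.3} directly by comparing the Scholium~\ref{schol3.2}.c rules case by case (which itself requires convexity in the case $p([x]_E)=p([y]_E)$ but $ex\neq ey$), whereas the paper obtains \eqref{eq:6.3} for free as a consequence of Theorem~\ref{thm5.3} once condition (1) there is checked; you also spell out the well-definedness of \eqref{eq:6.2}, which the paper treats as routine.
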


\begin{rem} Conversely, if $\al: U \onto V$ is a surjective monotone
transmission, then the equivalence relation $E := E(\al)$ on $U$
(i.e., $x \sim_E y$ iff $\al(x) = \al(y)$) is clearly an
OCTE-relation, and $\al = \rho \circ \pi_E$ with $\rho$ an
isomorphism from  $U/E$ to the semiring $V$. Thus knowing the
OCTE-relation on $U$ gives us a hold on all  surjective monotone
transmissions starting from $U$. \endbox
\end{rem}

In \cite[\S4]{IKR2} we pursued the question, when a given
TE-relation $E$ on a supertropical semiring $U$ is
\textbf{transmissive}, i.e., the monoid $U/E$ admits the structure
of a supertropical semiring such that $\pi_E$ is  transmission.
The main result there \cite[Theorem 4.7]{IKR2} stated that this
happens if the monoid $(M/E) \sm \{0\}$ is cancellative ($M: =
eU$). The present Theorem~\ref{thm6.2} exhibits another class of
TE-relations, which are transmissive. Here no cancellation
hypothesis is needed.

\pSkip

 In \cite{IKR2} a transmissive equivalence relation $E$ on
supertropical semiring is called \textbf{homomorphic}, if the
transmission $\pi_E: U \to U/E$ is a semiring homomorphism. \S5
and \S6 of \cite{IKR2} contain various explicit examples of
homomorphic equivalence relations.

Assume now that $U$ is  totally ordered. As we know,  all
OCTE-relations on $U$ are homomorphic. We  search for cases where
the homomorphic equivalence relations described in \cite[\S5 and
\S6]{IKR2} are OCTE-relations.

\begin{example}\label{examps5.15}
If $\mfa$ is any ideal of $U$ then the homomorphic equivalence
relation $E(\mfa)$ on $U$ (cf. \cite[\S5]{IKR2}) is  order
compatible, hence is an OCTE-relation.
\end{example}

\begin{proof}
We prove that the $E(\mfa)$ equivalence classes are convex in $U$
and then will be  done. Let $x,y,z \in U$ be given with $x \leq z
\leq y$ and $x \sim_\mfa y$. We verify that $y \sim_\mfa z$ by
using the description of $E(\mfa)$ in \cite[Theorem 5.4]{IKR2}. We
have $ex \leq ez \leq ey$.
\begin{description}
    \item[{Case 1}] $ex \leq ea$ for some  $a \in \mfa$. Then $x \sim_\mfa
    0$, hence $y \sim_\mfa 0 $. Thus $ey \leq eb$ for some $b \in \mfa$, and from $ez \leq
    ey$ we infer that $z \sim_\mfa 0 $.
    \item[{Case 2}] $ex > ea$ for every  $a \in \mfa$.
Now $x = y$, hence also $x=z$.
\end{description}
Thus  $z \sim_\mfa
    x$ in both cases.
\end{proof}
\begin{example}\label{examps5.16} Let $\Phi$ be a homomorphic equivalence
relation on $M := eU$, and let $\mfA$ be an ideal of $U$
containing $M$. We study the equivalence relations $E := E(U,
\mfA, \Phi)$ defined in \cite[\S6]{IKR2} (cf. Definition 6.11).
Thus for $x,y \in \mfA$,
$$ x \sim_E y  \quad \Leftrightarrow \quad \left\{
\begin{array}{lll}
  \text{either} &  &  x=y, \\[1mm]
  \text{or} &   & x,y \in \mfA \ \text{and} \ ex \sim_\Phi ey. \\
\end{array}%
\right.$$ We know from \cite[Theorem 4.13.i]{IKR2} that $E$ is
multiplicative. It is easily verified that every equivalence class
of $E$ is convex in $U$ if the following two conditions hold:
\begin{enumerate}
    \item $\inu_U(x) \subset \mfA$ for every $x \in M$ such that $x \sim_\Phi
    y$ for some $y\in M $ with $y < x$.
    \item  $\inu_U(x) \cap \mfA$ is convex in $\inu_U(x)$ for the
    other $x \in M$ \footnote{This means that $\nu_u^{-1} (x) \cap \mfA $
    is an upper set in the totally ordered set $\nu^{-1}_u(x)$.}.
\end{enumerate}
Looking at \cite[Theorem 6.14]{IKR2} we see that the conditions
(1), (2) imply that $E$ is homomorphic. Thus $E$ is an
OCTE-relation precisely if these conditions (1), (2) are valid.
\end{example}

The case that $\Phi = \diag(M)$, i.e., $x \sim_\Phi y $ iff $x=y$,
gives us
\begin{sexample}\label{examps5.17} Let $\mfA$ be an ideal of $U$
containing $M$. The MFCE-relation $E(U, \mfA)$ (cf.
\cite[Defintion 6.15]{IKR2}) is order compatible (hence an
OCTE-relation) iff for every $x \in M$ the set $\mfA \cap
\inu_U(x)$ is convex in  $\inu_U(x)$.
\end{sexample}

\end{document}